\tikzset{
  midarrow/.style args={#1}{
    postaction={decorate},
    decoration={markings, mark=at position #1 with {\arrow{Stealth}}}
  }
}
\newlength{\alphabet}
\newcommand*{\Z}{\mathbb{Z}}
\newcommand*{\N}{\mathbb{N}}
\newcommand*{\R}{\mathbb{R}}
\newcommand*{\C}{\mathbb{C}}
\newcommand*{\cP}{\mathcal{P}}
\newcommand*{\cQ}{\mathcal{Q}}
\newcommand*{\cB}{\mathcal{B}}
\newcommand*{\cD}{\mathcal{D}}
\newcommand*{\cM}{\mathcal{M}}
\newcommand*{\cA}{\mathcal{A}}
\newcommand*{\cH}{\mathcal{H}}
\newcommand*{\fs}{\mathfrak{s}}
\newcommand*{\rmU}{\mathrm{U}}
\renewcommand{\det}{\operatorname{det}}
\DeclareMathOperator{\id}{id}
\newcommand*{\uwn}[1][]{\hat{w}_{#1}}
\DeclarePairedDelimiter{\ceil}{\lceil}{\rceil}
\theoremstyle{plain}%
\newtheorem*{theorem*}{Theorem}
\newtheorem{theorem}{Theorem}[section]
\newtheorem{lemma}[theorem]{Lemma}
\newtheorem{proposition}[theorem]{Proposition}
\theoremstyle{definition}
\newtheorem{definition}[theorem]{Definition}
\newtheorem*{ack}{Acknowledgements}
\theoremstyle{remark}
\newtheorem*{remark}{Remark}
\newcommand*{\titlestr}{An obstruction to isomorphism of tensor algebras of multivariable dynamical systems}
\newcommand*{\authorstr}{Boris Bilich}
\title{\titlestr}
\author{\authorstr}
\date{\today}
\begin{document}
\begin{center}
	{\LARGE
		\textbf{\titlestr}
	}

	\bigskip

	{\Large \authorstr\footnote{University of Haifa and Georg-August-Universit\"{a}t G\"{o}ttingen \\
	The author is partially supported by the Bloom PhD scholarship and the DFG Middle-Eastern collaboration project no. 529300231\\

	\textbf{ Keywords}: multivariable dynamical system, operator algebra, tensor algebra, piecewise conjugacy.

	\textbf{ 2020 Mathematics Subject
		Classification. Primary:} 47L30, 46H20.
	\textbf{Secondary:} 46L55, 37B20.} }

\end{center}
\begin{abstract}
	In their paper on multivariable dynamics, Davidson and Katsoulis conjectured that two multivariable dynamical systems have isomorphic tensor algebras if and only if they are piecewise conjugate.
	We disprove the conjecture by constructing two piecewise conjugate multivariable dynamical systems with four maps on a two-dimensional space, whose tensor algebras are not isomorphic.
\end{abstract}
\section{Introduction}

A classical discrete-time dynamical system consists of a phase space $X$ and a time evolution map $\sigma \colon X \to X$.
This framework captures deterministic processes where the state of the system evolves predictably under the repeated application of $\sigma$.
The notion of a multivariable dynamical system extends this classical idea by replacing the single map $\sigma$ with a finite tuple of maps $\sigma_1, \sigma_2, \ldots, \sigma_n \colon X \to X$.
This extension allows to model systems where the evolution is non-deterministic, governed by multiple possible actions at each step.
A prominent example occurs when $X$ is a complete metric space, and each map $\sigma_i$ is a contraction, forming an iterated function system~\cite{BarnsleyIFS}.
Such systems generate fractals through the chaos game, where a random iterative application of the maps produces a sequence converging to a limit fractal.

A common strategy in the study of geometric objects is to represent their structure algebraically, allowing tools from algebra to reveal deeper dynamical insights.
For an $n$-variable dynamical system on a compact space $X$, the \emph{covariance algebra} is the universal algebra generated by $C(X)$ and elements $\fs_1,\ldots,\fs_n$, satisfying the covariance relations
\[
	f \cdot \fs_i = \fs_i (f\circ \sigma_i), \quad f \in C(X), \; i=1,\ldots,n.
\]
Its universal operator-algebraic completion, where the generators $(\fs_1,\ldots,\fs_n)$ form a row contraction ($\|\sum_{i=1}^n\fs_i \fs_i^*\| \leq 1$), is called the \emph{tensor algebra}.
Historically, these structures were first studied for single-variable dynamical systems by Peters~\cite{Peters84}, influenced by earlier work of Arveson~\cite{Arveson67} and Arveson–Josephson~\cite{AJ69}.
Later, Davidson and Katsoulis~\cite{DK11} introduced tensor algebras of multivariable dynamical systems, which generalize the single-variable case and can be regarded as special cases of tensor algebras of $C^*$-correspondences introduced by Muhly and Solel~\cite{MS98}.

A natural question arises: given two multivariable dynamical systems, when are their tensor algebras isomorphic?
In the single-variable case, this occurs if and only if the underlying dynamical systems are conjugate.
This result was first proven under additional assumptions by Peters~\cite{Peters84} and Hadwin–Hoover~\cite{HH88} and later, without any assumptions, by Davidson and Katsoulis~\cite{DK8}.

Motivated by their success, Davidson and Katsoulis turned their attention to classifying tensor algebras arising from multivariable dynamical systems.
They noted that conjugacy is often too strong a requirement in this setting, as there exist simple examples of non-conjugate systems that nonetheless have isomorphic tensor algebras.
To address this, they introduced the notion of \emph{piecewise conjugacy} (see Definition~\ref{d:piecewise-conjugacy}), which roughly means that, in a neighborhood of every point, the maps of one system can be rearranged via a permutation to match those of the other system.
Note that the choice of permutation may vary from point to point.

Davidson and Katsoulis showed that piecewise conjugacy is an isomorphism invariant of tensor algebras, making it a natural candidate for classification.
They conjectured the converse, namely, that piecewise conjugacy is a complete invariant for tensor algebra isomorphism~\cite[Conjecture 3.26]{DK11}.
They verified the conjecture for up to three maps or when the underlying space has covering dimension at most one.
In this paper, we demonstrate that their conjecture does not hold in general by constructing an explicit counterexample involving four maps on a two-dimensional space.
\begin{theorem*}[\ref{t:pc-not-iso}]
	There exist two piecewise conjugate $4$-variable dynamical systems on a two-dimensional compact metrizable space such that their tensor and covariance algebras are not algebraically isomorphic.
\end{theorem*}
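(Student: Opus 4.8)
My plan is to prove the theorem by constructing an explicit counterexample, and the argument has three steps: first, extract from any isomorphism of the algebras an obstruction that refines piecewise conjugacy; second, build two concrete $4$-variable systems on a $2$-dimensional compact metrizable space; third, check that these systems are piecewise conjugate but carry different values of the obstruction.

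\emph{The obstruction.} Suppose $(X,\sigma)$ and $(Y,\tau)$ have isomorphic tensor algebras. Following Davidson and Katsoulis, one first recovers a homeomorphism $\gamma\colon X\to Y$ from the character theory of the algebra, and then — localizing at characters and exploiting the gauge action to control the degree filtration — one obtains that $\gamma$ implements a piecewise conjugacy. The plan is to push this analysis one step further. Identifying $Y$ with $X$ along $\gamma$, form the \'etale space
\[
	\cP=\bigl\{(x,\pi)\in X\times S_n : \sigma_i\equiv\tau_{\pi(i)}\text{ on a neighbourhood of }x\bigr\}
\]
over $X$; its fibre over $x$ is a left coset of the local symmetry group $H^\tau_x=\{\rho\in S_n : \tau_j\equiv\tau_{\rho(j)}\text{ near }x\}$, so $\cP$ is a torsor over $X$ under the sheaf $\cH^\tau$ of local symmetry groups. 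Piecewise conjugacy is exactly the statement that $\cP\to X$ is onto, i.e.\ that this torsor is defined. The new claim to establish is that an algebra isomorphism forces the torsor $\cP$ — or the reduced torsor obtained by collapsing the part of $\cH^\tau$ invisible to the algebra — to be \emph{trivial}: it must admit a continuous (hence locally constant) section, equivalently its class in the non-abelian cohomology $H^1(X;\cH^\tau)$ must be trivial. Proving this is the crux: one must show that the bimodule isomorphism on the degree-one part induced by an \emph{a priori} non-graded, purely algebraic isomorphism can be renormalised coherently over all of $X$, so that the locally defined conjugating permutations patch together.

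\emph{The construction.} For the counterexample I would take $X$ to be a suitable finite $2$-dimensional CW complex — compact and metrizable — whose $1$-skeleton $L$ carries the collision locus and whose $2$-cells are attached so that the relevant non-abelian cohomology with coefficients in the symmetry sheaf is nontrivial. On $X$ I would place two $4$-tuples $\sigma=(\sigma_1,\dots,\sigma_4)$ and $\tau=(\tau_1,\dots,\tau_4)$ with the \emph{same} unordered value function $x\mapsto\{\sigma_1(x),\dots,\sigma_4(x)\}$, differing only by a reordering; the reordering is arranged to jump by transpositions of colliding coordinates across the strata of $L$ in a pattern that makes $\cP$ represent a nonzero class in $H^1(X;\cH^\tau)$, while near a distinguished base point $\tau$ simply equals $\sigma$. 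The delicate design point is that the maps must genuinely \emph{coincide on open sets} along the collision locus, not merely at isolated points: this is what makes the symmetry groups $H^\sigma_x$ and $H^\tau_x$ large enough there for a conjugating permutation to exist on an entire neighbourhood of every point, hence what makes the two systems piecewise conjugate in the sense of Definition~\ref{d:piecewise-conjugacy}.

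\emph{Verification and the main obstacle.} Piecewise conjugacy is then checked by hand — exhibit the finite open cover of $X$ adapted to $L$, write down the permutation on each piece, and verify the cocycle relation on overlaps — and nontriviality of the class $[\cP]$ is a finite cohomology computation on the $2$-complex. With the obstruction theorem in hand this shows that the tensor algebras, and (since the argument only ever uses the algebraic structure: characters, the degree filtration, the induced bimodule) also the covariance algebras, are not isomorphic, which is the assertion. Two remarks. The counterexample must use at least four maps and dimension at least two: for $n\le 3$ the symmetry groups sit inside $S_3$ and, together with the constraints piecewise conjugacy imposes on the collision locus, force $\cP$ to be trivial, while over a space of covering dimension $\le 1$ the relevant $H^1$ vanishes — so Davidson and Katsoulis's conjecture genuinely does hold throughout their verified range and the example must live exactly at its boundary. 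The main obstacle is unquestionably the obstruction theorem itself: it is not automatic that a non-isometric, non-graded algebra isomorphism respects the global patching of the local permutation data, and establishing that it does requires a rigidity analysis considerably sharpening the arguments of \cite{DK11}.
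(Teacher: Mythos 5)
There is a genuine gap, and it sits exactly at what you yourself identify as the crux: the ``obstruction theorem'' you propose is false as stated. An algebraic isomorphism of the algebras does \emph{not} force the torsor $\cP$ of locally conjugating permutations to be trivial, i.e.\ to admit a locally constant section. What an isomorphism actually yields (after pinning down the homeomorphism of base spaces) is a continuous \emph{unitary-valued} map $u\colon X\to\rmU(n)$ with $u_{ij}(x)\neq 0\Rightarrow\tau_i(x)=\sigma_j(x)$; at each point $x$ this forces $u(x)\in U_g\,\rmU(\cP_\sigma(x))$ for some locally admissible permutation $g$, but the coset $U_g\,\rmU(\cP_\sigma(x))$ is a connected subset of $\rmU(n)$ and on the collision locus distinct cosets $U_g\rmU(\cP)$ and $U_h\rmU(\cP)$ coincide whenever $g^{-1}h$ preserves the blocks of $\cP$. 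So $u$ can rotate continuously from one permutation to another wherever the maps collide, and a nontrivial class in $H^1(X;\cH^\tau)$ of the discrete permutation data is no obstruction at all. This is precisely why Davidson and Katsoulis could prove the converse for $n\le 3$ (and for $1$-dimensional spaces) even in the presence of nontrivially twisted permutation data: $\rmU(n)$ is connected, and for small $n$ the required interpolation always exists. Your closing remark that ``over a space of covering dimension $\le 1$ the relevant $H^1$ vanishes'' also signals the problem: $H^1$ of a graph with coefficients in a finite group certainly need not vanish, so if your obstruction were the right one the conjecture would already fail on circles, which it does not.

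The obstruction that actually works is one order higher: it is a homotopy-theoretic obstruction to extending the unitary-valued interpolation from the $1$-skeleton to the $2$-skeleton of the ``permutation simplex.'' Concretely, one classifies the paths $\gamma^{g_0,g_1}=U_{g_0}^*(u\circ\epsilon^{g_0,g_1})$ in the block groups $\rmU(\cP(\Delta^{g_0,g_1}))$ by winding numbers of block determinants, and shows that the contractibility of the resulting boundary loops in $\rmU(\cP(\Delta^{g_0,g_1,g_2}))$ amounts to an integer linear system $\cM x=\cD$; for $n=4$ this system is inconsistent modulo $2$ (a finite, computer-checked rank computation), so no such $u$ exists on a space containing $\Delta^{S_4}_{(2)}$ with the prescribed collision pattern. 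Two further points your plan leaves open even granting an obstruction: (i) you must convert a purely algebraic (not necessarily continuous or graded) isomorphism into such a $u$ — this is done by showing the ideal generated by the $\fs_i$ is character-theoretically canonical for fixed-point-free systems and passing to $I/I^2$, then taking the polar part of the resulting invertible transition matrix; and (ii) you must exclude unitary equivalence after a \emph{nontrivial} self-homeomorphism of $X$, which requires an extra rigidification of the space (e.g.\ adjoining isolated points with prescribed preimage counts) that your outline does not address.
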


Our approach relies on disproving an auxiliary conjecture proposed by Davidson and Katsoulis, which concerns the existence of a continuous map from a simplex with vertices indexed by the symmetric group $S_n$ to the unitary group $\mathrm{U}(n)$, subject to certain admissibility conditions (see \cite[Conjecture~3.31]{DK11}).
Davidson and Katsoulis showed that the validity of this ``simplex conjecture'' would imply the piecewise conjugacy conjecture.
We show that the converse is also true by constructing two piecewise conjugate multivariable dynamical systems such that an isomorphism of their tensor algebras would yield an admissible map on the $2$-skeleton of the $S_n$-simplex.
Therefore, to find a counterexample to the piecewise conjugacy conjecture, it is enough to show that no such admissible map exists.
We show this by identifying a topological obstruction to the simplex conjecture, demonstrating that such a continuous map does not exist on the $2$-simplex when $n\geq 4$.

The paper is organized as follows.
Sections~\ref{s:Un-invariants} and \ref{s:simplex-conj} constitute the first part of the paper, where we construct a topological obstruction to the simplex conjecture.
Section~\ref{s:Un-invariants} develops homotopy invariants of paths in spaces of unitary matrices.
In Section~\ref{s:simplex-conj}, we introduce admissible maps from a skeleton of the $S_n$-simplex to $\rmU(n)$
The Davidson-Katsoulis simplex conjecture posits that such maps always exist.
We use the invariants from the previous section to establish a complete invariant of homotopy classes of admissible maps on the $1$-skeleton.
Finally, we prove Theorem~\ref{t:no-admissible}, showing that admissible maps on the $1$-skeleton cannot extend to the $2$-skeleton.
The proof reduces to showing that a certain system of linear equations has no solutions, which we verify through computer algebra.

The second part, where we apply our topological obstruction to construct counterexamples to the piecewise conjugacy conjecture, begins with Section~\ref{s:mds-intro}.
After a brief introduction to multivariable dynamical systems and their tensor algebras, we present the crucial notions of piecewise conjugacy and unitary equivalence.
Building upon earlier work by Kakariadis and Katsoulis~\cite{KK14}, Katsoulis and Ramsey~\cite{KR22} proved that tensor algebras of noncommutative multivariable dynamical systems are completely isometrically isomorphic if and only if the systems are \emph{unitarily equivalent after conjugation} (see Definition~\ref{d:unitary-equivalence}).
In Theorem~\ref{t:uc-is-iso}, we strengthen their result for systems without fixed points by showing that in this case, the equivalence holds even for \emph{algebraic} isomorphism.
Building on this, in Section~\ref{s:pe-but-not-ue} we construct two piecewise conjugate $4$-variable dynamical systems such that any unitary equivalence between them would yield an admissible map on the $2$-skeleton.
Since we established the non-existence of such maps, we conclude that the systems are not unitarily equivalent in Theorem~\ref{t:pc-not-ue}.
While this construction does not fully address the piecewise conjugacy conjecture since it does not rule out non-trivial conjugations, we overcome this limitation in Section~\ref{s:rigidification}. There, we introduce a \emph{rigidification} procedure that makes non-trivial conjugations impossible, leading to Theorem~\ref{t:pc-not-iso}, which provides a definitive counterexample to the piecewise conjugacy conjecture.

Finally, we provide an overview of additional related results.
Muhly and Solel developed tensor algebras associated to $C^*$-correspondences and showed that, under an aperiodicity condition, these algebras can be classified up to Morita equivalence by the Morita equivalence class of the underlying correspondence~\cite{MS00}.

In the context of directed graphs, Solel~\cite{Solel04} established that graph tensor algebras completely encode the underlying directed graphs, thereby solving their isomorphism problem in full generality (see also \cite{KK04, Duncan16}).
Additionally, Davidson and Roydor investigated tensor algebras of topological graphs, introducing a notion called local conjugacy, which closely resembles piecewise conjugacy, and proving that algebraic isomorphism implies local conjugacy~\cite{DR11}.
Since multivariable dynamical systems can be viewed as topological graphs, our results also provide an example of two locally conjugate topological graphs whose tensor algebras are not isomorphic.
Recently, Frausino, Ng, and Sims~\cite{FNS23} proposed a cohomological obstruction to isomorphism of correspondences associated to locally conjugate topological graphs.
We believe that a combination of their ideas with the topological obstruction presented in this paper could lead to further progress in the classification of operator algebras associated to dynamics.

Ramsey~\cite{Ramsey16} obtained a complete classification of semicrossed product algebras, another nonselfadjoint operator algebraic completion of the covariance algebra, showing that isomorphisms of these algebras correspond precisely to a dynamical equivalence relation slightly stronger than piecewise conjugacy.

In another direction, Cornelissen and Marcolli~\cite{CM13} showed that piecewise conjugacy appears naturally in the theory of hyperbolic groups and spectral geometry, demonstrating that these dynamical equivalences have relevance far beyond operator algebras alone.
Kakariadis and Shalit~\cite{KS19} studied piecewise conjugacy in the context of operator algebras associated with monomial ideals.

Dor-On~\cite{DorOn18} studied weighted partial systems, extending the classification theory and proving that algebraic or bounded isomorphism of their tensor algebras corresponds precisely to suitable generalized forms of piecewise conjugacy.
Dor-On, Eilers, and Geffen~\cite{DEG20} then unified self-adjoint and non-self-adjoint classification approaches by relating tensor algebra isomorphisms to classification hierarchies of their associated $C^*$-algebras.

\begin{ack}
	I am deeply grateful to my advisor Adam Dor-On for introducing me to this problem, providing numerous valuable suggestions, and carefully proofreading the text.
	I thank my advisor Ralf Meyer for his corrections on a draft of this paper.
\end{ack}

\section{Preliminaries and notation}
Throughout the paper, $n\geq 1$ is a fixed integer.
Let $S_n$ be the symmetric group acting on $\bar n = \{1, \ldots, n\}$.
Fix a total order $\leq$ on $S_n$, for example, the lexicographic order.

A partition of $\bar n$ is a collection $\cP = \{B_1, \ldots, B_k\}$ of non-empty disjoint subsets of $\bar n$ called \emph{blocks} such that $\bar n = B_1 \cup \ldots \cup B_k$.
Given a partition $\cP$, we define an equivalence relation $\sim_{\cP}$ as $i \sim_{\cP} j$ if and only if there exists $B\in \cP$ such that $i,j\in B$.
This defines a bijective correspondence between partitions of $\bar n$ and equivalence relations on $\bar n$.
Given two partitions $\cP, \cQ$ of $\bar n$, we write $\cP \preceq \cQ$ if $\cQ$ is coarser than $\cP$ (i.e., each block of $\cP$ is contained in some block of $\cQ$).

For a collection $g_1, \ldots, g_k \in S_n$, we denote by $\cP(g_1, \ldots, g_k)$ the partition of $\bar n$ into orbits of the action of the group generated by $g_1, \ldots, g_k$.
In particular, $\cP(g)$ is the cyclic decomposition of $g$.

We denote by $\Delta^{S_n}$ the $(n!-1)$-simplex with vertices indexed by $S_n$.
An element $x\in \Delta^{S_n}$ is written as $x = \sum_{g\in S_n} x_g g$ with $x_g \geq 0$ for all $g\in S_n$ and $\sum_{g\in S_n} x_g = 1$.
Given a tuple $g_0, \ldots, g_k \in S_n$ with $g_i < g_{i+1}$ for all $0\leq i < k$, we denote by $\Delta^{g_0, \ldots, g_k}$ the corresponding $k$-subsimplex of $\Delta^{S_n}$ spanned by $g_0, \ldots, g_k$.
The order on $S_n$ induces a canonical orientation on each $\Delta^{g_0, \ldots, g_k}$.
The union of all $k$-subsimplices of $\Delta^{S_n}$ is the $k$-skeleton of $\Delta^{S_n}$, denoted by $\Delta^{S_n}_{(k)}$.

For every subsimplex, we define a partition $\cP(\Delta^{g_0, \ldots, g_k}) =  \cP(g_0^{-1}g_1, \ldots, \allowbreak g_0^{-1}g_k)$.
By definition, $\cP(\Delta^{g_0}) = \{\{1\}, \{2\}, \ldots, \{n\}\}$ is the finest partition of $\bar n$.
\begin{lemma}\label{l:subsimplex-subpartition}
	Suppose that $\Delta^{h_0, \ldots, h_k} \subset \Delta^{g_0, \ldots, g_l}$ are two nested subsimplices.
	Then $\cP(\Delta^{h_0, \ldots, h_k}) \preceq \cP(\Delta^{g_0, \ldots, g_l})$.
\end{lemma}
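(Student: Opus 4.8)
The plan is to reduce the statement to an elementary containment of subgroups of $S_n$. Since $\Delta^{h_0, \ldots, h_k}$ is a face of $\Delta^{g_0, \ldots, g_l}$, its set of vertices is a subset of the set of vertices of the larger simplex, so $\{h_0, \ldots, h_k\} \subseteq \{g_0, \ldots, g_l\}$. Set $H = \langle h_0^{-1}h_1, \ldots, h_0^{-1}h_k\rangle$ and $G = \langle g_0^{-1}g_1, \ldots, g_0^{-1}g_l\rangle$. By definition $\cP(\Delta^{h_0, \ldots, h_k})$ and $\cP(\Delta^{g_0, \ldots, g_l})$ are precisely the orbit partitions of $H$ and of $G$ acting on $\bar n$. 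Hence it suffices to prove $H \subseteq G$: once this is known, every $H$-orbit is contained in a single $G$-orbit, which is exactly the assertion $\cP(\Delta^{h_0, \ldots, h_k}) \preceq \cP(\Delta^{g_0, \ldots, g_l})$.

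The one point deserving care is that $G$ is independent of which vertex is singled out as the basepoint. Indeed, for any indices $i, j \in \{0, \ldots, l\}$ one has $g_i^{-1}g_j = (g_0^{-1}g_i)^{-1}(g_0^{-1}g_j) \in G$, so $G = \langle g_i^{-1}g_j : 0 \le i, j \le l \rangle$. Now write each $h_p = g_{m_p}$ for suitable indices $m_0, \ldots, m_k \in \{0, \ldots, l\}$; then $h_0^{-1}h_p = g_{m_0}^{-1}g_{m_p} \in G$ for every $p$, and therefore $H \subseteq G$. Combined with the reduction of the previous paragraph, this proves the lemma.

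There is no real obstacle here; the degenerate case $k = 0$, in which $\cP(\Delta^{h_0})$ is the finest partition and $H$ is trivial, is subsumed automatically. The only thing to get right is the basepoint-independence of the generated subgroup, which is what allows the argument to go through even when $h_0 \neq g_0$.
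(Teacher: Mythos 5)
Your proof is correct and follows essentially the same route as the paper's: reduce to showing $H \subseteq G$ via the identity $h_0^{-1}h_p = (g_0^{-1}g_{m_0})^{-1}(g_0^{-1}g_{m_p})$, then observe that containment of groups implies refinement of orbit partitions. You simply spell out the basepoint-independence and the orbit-refinement step slightly more explicitly than the paper does.
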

\begin{proof}
	It is enough to prove that the group $H$ generated by $\{h_0^{-1}h_j\}_{1\leq j \leq k}$ is contained in the group $G$ generated by $\{g_0^{-1} g_i\}_{1\leq i \leq l}$.
	Indeed, the condition $\Delta^{h_0, \ldots, h_k} \subset \Delta^{g_0, \ldots, g_l}$ implies that there are indices $0\leq i_0,\ldots, i_k \leq l$ such that $h_j = g_{i_j}$ for all $0\leq j \leq k$.
	Therefore, we have $h_0^{-1}h_j = g_{i_0}^{-1}g_{i_j} = (g_0^{-1} g_{i_0})^{-1} (g_0^{-1}g_{i_j})$, that is, every generator of $H$ is an element of $G$.
	This concludes the proof.
\end{proof}

Let $\C^n$ be the standard $n$-dimensional Hilbert space with orthonormal basis $\{e_1, \ldots, e_n\}$.
We denote the unitary group of $\C^n$ by $\rmU(n)$.
If $F \subset \bar n$, then we denote by $\rmU(F)$ the unitary group of the subspace $\langle e_i \colon i\in F \rangle$.
For a partition $\cP$, we denote by $\rmU(\cP) = \prod_{B \in \cP} \rmU(B)$ the subgroup of $\rmU(n)$ consisting of all unitary matrices with the block-diagonal form induced by $\cP$.
If $\cP \preceq \cQ$, then $\rmU(\cP) \subseteq \rmU(\cQ)$.

For a block $B\in \cP$, we denote by $\pi_B \colon \rmU(\cP) \to \rmU(B)$ the projection onto the corresponding block.
The $B$-minor map $\det_B = \det\circ \pi_B \colon \rmU(\cP) \to \rmU(1)$ will play a key role in constructing homotopy invariants of paths in $\rmU(\cP)$.

Let $U_g\in \rmU(n)$ denote the permutation matrix corresponding to $g\in S_n$, defined by $U_g e_i = e_{g(i)}$. %
We have $U_g\in \rmU(\cP)$ if and only if $g(B) = B$ for all $B\in \cP$.

\section{Homotopy invariants of paths in spaces of unitary matrices}\label{s:Un-invariants}
In this section, we study paths in spaces $\rmU(\cP)$ up to homotopy.
By a homotopy of paths, we always mean an endpoint-preserving homotopy.
If $X$ is a topological space and $x,y\in X$, we denote by $\pi_1(x, y \mid X)$ the set of homotopy classes of paths in $X$ from $x$ to $y$.
If $X$ is clear from the context, we omit it from the notation.

We denote by $\gamma \ast \gamma'$ the concatenation of two paths, provided the endpoint of $\gamma$ coincides with the initial point of $\gamma'$.
For a path $\gamma$,  we denote the inverse path $t \mapsto \gamma(1-t)$ by $\gamma^{-1}$.
Additionally, if paths take values in a topological group, we define their \emph{pointwise product} $(\gamma \cdot \gamma')(t) = \gamma(t)\gamma'(t)$.
\subsection{Homotopy of paths in \texorpdfstring{$\rmU(1)$}{U(1)}}
In this section, we introduce the main building blocks: homotopy invariants of paths in the smallest unitary group $\rmU(1)$.
Our results rely on standard concepts from algebraic topology, readily available in any textbook on the subject (for example, see \cite[Chapter 3]{Fulton95}).
To suit our specific needs, we adapt notation and provide a concise, self-contained exposition.

From a topological point of view, the unitary group $\rmU(1)$ is the circle $S^1$.
The universal covering map $\sigma\colon \R \to \rmU(1)$ is given by $\sigma(x) = e^{2\pi i x}$.
By the path lifting property, for any path $\gamma\colon [0,1] \to \rmU(1)$, there exists a lift $\hat{\gamma}\colon [0,1] \to \R$ such that $\sigma\circ \hat{\gamma} = \gamma$.
The lift is unique up to an integer shift, i.e., if $\hat{\gamma}_1$ is another lift of $\gamma$, then there is $m\in \Z$ such that $\hat{\gamma}_1(t) = \hat{\gamma}(t) + m$ for all $t\in [0,1]$.
\begin{definition}\label{d:path-invariants}

	For a continuous path $\gamma\colon [0,1] \to \rmU(1)$, we define the following invariants.
	\begin{enumerate}
		\item The \emph{winding number} $w(\gamma)$ is the (not necessarily integer) number of times the path winds counter-clockwise.

		      Precisely, it equals $w(\gamma) = \hat{\gamma}(1) - \hat{\gamma}(0)\in \R$ and it does not depend on the choice of the lift $\hat\gamma$.

		\item The \emph{upper winding number} $\uwn(\gamma) = \ceil*{w(\gamma)} \in \Z$.
		\item The \emph{defect} $\delta(\gamma) = \uwn(\gamma) - w(\gamma) \in [0,1)$ is the fraction of a full counter-clockwise turn that the path $\gamma$ lacks to become a loop.
	\end{enumerate}
\end{definition}

Let $\gamma, \gamma'\colon [0,1]\to \rmU(1)$ be continuous paths.
The winding number satisfies the following additivity properties:
\begin{enumerate}
	\item $w(\gamma \ast \gamma') = w(\gamma) + w(\gamma')$ whenever the concatenation $\gamma \ast \gamma'$ is well-defined (i.e., $\gamma(1) = \gamma'(0)$).
	\item $w(\gamma^{-1}) = -w(\gamma)$.
	\item $w(\gamma\cdot \gamma') = w(\gamma) + w(\gamma')$.
\end{enumerate}

The winding number is a more standard homotopy invariant of paths.
However, it is generally not an integer, except when the path is a loop.
We need its integer-valued counterpart, the upper winding number, in order to perform computer experiments.
The next result sums up the key properties of these invariants.
\begin{proposition}\label{p:U1-invariants}
	Let $x_0, x_1 \in \rmU(1)$ be arbitrary points.
	Then, the following hold.
	\begin{enumerate}
		\item The map $[\gamma] \mapsto w(\gamma)$ is a complete invariant of the homotopy class $[\gamma] \in \pi_1(x_0, x_1)$. \label{p:U1-invariants:w}
		      In particular, when $x_0 = x_1$, a loop $\gamma$ is contractible if and only if $w(\gamma) = 0$.
		\item The map $[\gamma] \mapsto \uwn(\gamma)$ defines a bijection between $\Z$ and $\pi_1(x_0, x_1)$.\label{p:U1-invariants:uw}
		\item The defect $\delta(\gamma)$ depends only on the endpoints $x_0$ and $x_1$, and not on the specific path $\gamma$, i.e., $\delta(\gamma)$ is constant on $\pi_1(x_0, x_1)$. \label{p:U1-invariants:delta}
	\end{enumerate}
\end{proposition}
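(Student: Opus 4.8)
The plan is to reduce everything to the universal covering $\sigma\colon \R \to \rmU(1)$, $\sigma(x) = e^{2\pi i x}$, and the basic path lifting / homotopy lifting properties that were recalled just before the statement. All three claims will follow from one principle: homotopy classes of paths in $\rmU(1)$ with prescribed endpoints correspond bijectively to the fibers of $\sigma$ over the endpoints, via lifting.

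First I would prove (1). Given $\gamma\colon[0,1]\to\rmU(1)$ with $\gamma(0) = x_0$, $\gamma(1) = x_1$, pick a lift $\hat\gamma$ with $\sigma\circ\hat\gamma = \gamma$; then $w(\gamma) = \hat\gamma(1) - \hat\gamma(0)$, which is independent of the lift since two lifts differ by a constant integer. For invariance under homotopy: if $H\colon[0,1]\times[0,1]\to\rmU(1)$ is an endpoint-preserving homotopy from $\gamma$ to $\gamma'$, lift it to $\hat H$ by the homotopy lifting property; the two edges $s\mapsto \hat H(0,s)$ and $s\mapsto \hat H(1,s)$ are paths in $\R$ lying over the constant paths at $x_0$ and $x_1$, hence (by uniqueness of lifts of constant paths) are themselves constant, so $\hat H(\cdot, s)$ is a lift of $\gamma$ for every $s$ with fixed endpoint values; thus $w(\gamma) = w(\gamma')$. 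Conversely, if $w(\gamma) = w(\gamma')$, choose lifts $\hat\gamma, \hat\gamma'$ with the same initial point $\hat\gamma(0) = \hat\gamma'(0)$; then they also share the terminal point, and the straight-line homotopy $(t,s)\mapsto (1-s)\hat\gamma(t) + s\hat\gamma'(t)$ in $\R$ fixes endpoints, so composing with $\sigma$ gives an endpoint-preserving homotopy $\gamma \simeq \gamma'$. The statement about contractible loops when $x_0 = x_1$ is the special case: a loop is nullhomotopic iff it is homotopic to the constant path, iff $w = 0$.

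Next, (2) is a formal consequence of (1) together with surjectivity onto $\Z$. Write $x_0 = e^{2\pi i a_0}$, $x_1 = e^{2\pi i a_1}$ with $a_0, a_1\in\R$, so that $w(\gamma)\in a_1 - a_0 + \Z$ for every path $\gamma$ from $x_0$ to $x_1$, since $\hat\gamma(1) \in a_1 + \Z$ and $\hat\gamma(0)\in a_0 + \Z$. Hence $\uwn(\gamma) = \ceil{w(\gamma)}$ ranges over a coset-shifted copy of $\Z$ as $\gamma$ varies — more precisely, for each integer $m$ the affine path $t\mapsto \sigma(a_0 + t(a_1 - a_0 + m))$ has winding number $a_1 - a_0 + m$ and upper winding number $\ceil{a_1 - a_0 + m} = m + \ceil{a_1 - a_0}$, so every integer is attained. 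Injectivity: if $\uwn(\gamma) = \uwn(\gamma')$ then, since $w(\gamma)$ and $w(\gamma')$ lie in the same coset $a_1 - a_0 + \Z$ and have the same ceiling, they are equal, so $[\gamma] = [\gamma']$ by (1). This gives the bijection $\pi_1(x_0, x_1) \cong \Z$, $[\gamma]\mapsto \uwn(\gamma)$.

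Finally, (3) is immediate once the coset structure is in hand: every path from $x_0$ to $x_1$ has $w(\gamma)\in a_1 - a_0 + \Z$, and the defect $\delta(\gamma) = \ceil{w(\gamma)} - w(\gamma)$ is the fractional part $\{-w(\gamma)\} = \{-(a_1 - a_0)\}$, which depends only on $a_1 - a_0 \bmod \Z$, i.e. only on the pair $(x_0, x_1)$, not on $\gamma$ or even on the chosen lifts $a_0, a_1$. I do not expect a genuine obstacle here; the only thing to be careful about is the boundary behavior of the ceiling function (the value $\delta = 0$ occurs exactly when $w(\gamma)\in\Z$, i.e. when a loop-type path exists), so I would state the coset fact explicitly and let the three items fall out of it rather than re-deriving lifts three times.
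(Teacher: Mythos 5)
Your proposal is correct and follows essentially the same route as the paper: reduce to the universal cover, show that $w$ is a complete homotopy invariant, observe that the defect depends only on the endpoints, and deduce that $\uwn$ gives a bijection with $\Z$. The only differences are cosmetic --- you prove completeness of $w$ in full where the paper cites a textbook exercise, and you obtain the endpoint-dependence of $\delta$ and the surjectivity of $\uwn$ from the explicit coset $w(\gamma)\in a_1-a_0+\Z$ and affine paths, where the paper uses the concatenations $\gamma\ast\tau^{-1}$ and $\lambda^m\ast\gamma$.
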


\begin{proof}
	\begin{itemize}
		\item[\ref{p:U1-invariants:w}] \cite[Problem 3.14]{Fulton95}.

		\item[\ref{p:U1-invariants:delta}] If $\gamma, \tau$ are paths from $x_0$ to $x_1$, then $\gamma \ast \tau^{-1}$ is a loop.
			Therefore, we have $w(\gamma \ast \tau^{-1}) = w(\gamma) - w(\tau) \in \Z$.
			This implies that $w(\gamma)$ and $w(\tau)$ have the same fractional part, i.e., $\delta(\gamma) = \delta(\tau)$.

		\item[\ref{p:U1-invariants:uw}] We have $\uwn(\gamma) = w(\gamma) + \delta(\gamma)$, where $w(\gamma)$ is a complete invariant of the homotopy class and $\delta(\gamma)$ is constant on $\pi_1(x_0, x_1)$.
			Therefore, the assignment $[\gamma] \mapsto \uwn(\gamma)$ is an injective function $\pi_1(x_0, x_1) \to \Z$.
			On the other hand, consider a simple loop $\lambda(t) = e^{2\pi t}x_0$.
			Then, $\uwn(\lambda^m \ast \gamma) = \uwn(\gamma) + m$ for all $m\in \Z$.
			We conclude that any integer $m$ can be realized as $\uwn(\lambda^m \ast \gamma)$ for some path $\gamma$ and, therefore, the map $\uwn\colon \pi_1(x_0, x_1) \to \Z$ is surjective. \qedhere
	\end{itemize}
\end{proof}

\subsection{Homotopy of paths in \texorpdfstring{\(\rmU(\cP)\)}{U(P)}}
The determinant map $\det\colon \rmU(n) \to \rmU(1)$ is a fiber bundle with fiber $\mathrm{SU}(n)$.
By \cite[Proposition~13.11]{Hall15}, $\operatorname{SU}(n)$ is  simply connected.
It follows from the long exact sequence of homotopy groups that the map $\det_* \colon \pi_1(\rmU(n)) \to \pi_1(\rmU(1))$ is an isomorphism.

This implies that two paths $\gamma, \gamma' \colon [0,1]\to \rmU(n)$ with equal endpoints are homotopic if and only if the paths $\det \circ \gamma, \det \circ \gamma'$ are homotopic, if and only if $w(\det \circ \gamma) = w(\det \circ \gamma')$.
We generalize this to $\rmU(\cP)$ as follows.

Let $\cP$ be a partition of $\bar n$ and fix a pair of points $x,y\in \rmU(\cP)$.
Then, two paths $\gamma, \gamma'\colon [0,1]\to \rmU(\cP)$ from $x$ to $y$ are homotopic if and only if the paths $\pi_B\circ \gamma$ and $\pi_B \circ \gamma'$ are homotopic for all $B\in \cP$, where $\pi_B\colon \rmU(\cP) \to \rmU(B)$ is the projection onto the corresponding block.
Consequently, $\gamma$ and $\gamma'$ are homotopic if and only if their images under the $B$-minor map $\gamma_B = \det_B \circ \gamma$ and $\gamma_B' = \det_B \circ \gamma'$ are homotopic for all $B\in \cP$.

For $B\in \cP$, let $w_B(\gamma) = w(\gamma_B)$ and $\uwn[B](\gamma) = \uwn(\gamma_B)$.
Consider invariants $w_{\cP}(\gamma) = (w_B(\gamma))_{B\in \cP} \in \R^{\cP}$ and $\uwn[\cP](\gamma) = (\uwn[B](\gamma))_{B\in \cP} \in \Z^{\cP}$.
\begin{lemma}\label{l:paths-in-UP}
	The invariant $\uwn[\cP]$ defines a bijection between $\pi_1(x, y\mid \rmU(\cP))$ and $\Z^{\cP}$.
	Furthermore, a loop $\gamma$ is contractible in $\rmU(\cP)$ if and only if $w_{\cP}(\gamma) = \uwn[\cP](\gamma) = (0)_{B \in \cP}$.
\end{lemma}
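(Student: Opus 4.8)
The plan is to reduce everything to the single-block case, which is already handled by Proposition~\ref{p:U1-invariants}. First I would exploit the product structure $\rmU(\cP)=\prod_{B\in\cP}\rmU(B)$: writing $x=(x_B)_{B\in\cP}$ and $y=(y_B)_{B\in\cP}$, a path in $\rmU(\cP)$ from $x$ to $y$ is exactly a tuple of paths $\pi_B\circ\gamma$ in $\rmU(B)$ from $x_B$ to $y_B$, and an endpoint-preserving homotopy in the product is exactly a tuple of endpoint-preserving homotopies in the factors. Hence
\[
	\pi_1(x,y\mid\rmU(\cP))\;\xrightarrow{\ \sim\ }\;\prod_{B\in\cP}\pi_1(x_B,y_B\mid\rmU(B)),\qquad [\gamma]\mapsto\bigl([\pi_B\circ\gamma]\bigr)_{B\in\cP},
\]
is a bijection, so it suffices to treat one block at a time.

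Next, fixing $B\in\cP$, I would invoke the fact recalled just before the lemma — coming from simple connectedness of $\mathrm{SU}(B)$ and the long exact sequence — that two paths in $\rmU(B)$ with equal endpoints are homotopic if and only if the winding numbers of their determinants agree. From here the argument is a verbatim copy of the proof of Proposition~\ref{p:U1-invariants}\ref{p:U1-invariants:uw}, now carried out in $\rmU(B)$ instead of $\rmU(1)$: injectivity of $[\pi_B\circ\gamma]\mapsto\uwn[B](\gamma)$ follows from that homotopy criterion together with the fact that the defect depends only on the endpoints, and surjectivity onto $\Z$ follows by precomposing a fixed path from $x_B$ to $y_B$ with powers of the diagonal loop $t\mapsto\operatorname{diag}(e^{2\pi i t},1,\dots,1)$ supported on $B$. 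Thus $[\pi_B\circ\gamma]\mapsto\uwn[B](\gamma)$ is a bijection $\pi_1(x_B,y_B\mid\rmU(B))\to\Z$, and composing with the product decomposition above shows that $\uwn[\cP]=(\uwn[B])_{B\in\cP}$ is a bijection $\pi_1(x,y\mid\rmU(\cP))\to\Z^{\cP}$, which is the first claim.

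For the ``furthermore'' statement I would specialize to $x=y$, so that $\gamma$ is a loop and each $\gamma_B=\det_B\circ\gamma$ is a loop in $\rmU(1)$. Then $w_B(\gamma)\in\Z$ (a loop lifts to a path whose endpoints differ by an integer), hence $\delta(\gamma_B)=0$ and $w_B(\gamma)=\uwn[B](\gamma)$ for every $B$, giving $w_{\cP}(\gamma)=\uwn[\cP](\gamma)$. Since the constant loop $c$ satisfies $w_{\cP}(c)=\uwn[\cP](c)=(0)_{B\in\cP}$ and $\uwn[\cP]$ is a bijection, $\gamma$ is contractible, i.e. $[\gamma]=[c]$, if and only if $\uwn[\cP](\gamma)=(0)_{B\in\cP}$, equivalently $w_{\cP}(\gamma)=(0)_{B\in\cP}$.

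There is no serious obstacle here; the statement is essentially bookkeeping on top of Proposition~\ref{p:U1-invariants}. The only point deserving care is the upgrade from the $\pi_1$-isomorphism $\det_*$ to a bijection on homotopy classes of paths with prescribed, possibly unequal, endpoints — in particular surjectivity onto $\Z$ rather than mere injectivity — which is why I would spell out the diagonal-loop construction exactly as in the proof of Proposition~\ref{p:U1-invariants}\ref{p:U1-invariants:uw}.
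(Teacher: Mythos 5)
Your proposal is correct and follows essentially the same route as the paper: decompose $\pi_1(x,y\mid\rmU(\cP))$ via the product structure into blockwise path classes, transport each factor to $\rmU(1)$ through the determinant (using simple connectedness of $\mathrm{SU}$), and apply Proposition~\ref{p:U1-invariants}; your explicit treatment of surjectivity via the diagonal loop and of the ``furthermore'' clause merely fills in details the paper leaves implicit.
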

\begin{proof}
	Since $\rmU(\cP) = \prod_{B\in \cP} \rmU(B)$, the assignment $[\gamma] \mapsto ([\pi_B\circ \gamma])_{B\in \cP}$ defines a bijection
	\[
		\pi_1(x, y\mid \rmU(\cP)) \cong \prod_{B\in \cP} \pi_1(\pi_B(x), \pi_B(y) \mid \rmU(B)).
	\]
	Since $\rmU(B)\cong \rmU(\left|B\right|)$, we further have a bijection $\det^* \colon \pi_1(\pi_B(x), \pi_B(y) \mid \rmU(B))\to \pi_1(\det_B(x), \det_B(y)\mid \rmU(1))$.
	Finally, by Proposition~\ref{p:U1-invariants}, $\uwn$ defines a bijection $\pi_1(\det_B(x), \det_B(y))\to \Z$.
	Composing this chain of bijections, we conclude that $[\gamma]\mapsto (\uwn(\det\circ \pi_B \circ \gamma))_{B\in \cP} = \uwn[\cP](\gamma)$ is a bijection
	\[
		\pi_1(x, y\mid \rmU(\cP)) \xrightarrow{\sim} \bigoplus_{B\in \cP} \Z = \Z^{\cP}.
	\]

	The second statement follows immediately from the first one.
\end{proof}

\begin{lemma}\label{l:winding-subpart}
	Suppose that $\cP \preceq \cQ$ are partitions and $\gamma$ is a path in $\rmU(\cP)$.
	When $\gamma$ is considered as a path in $\rmU(\cQ)$, we have
	\[
		w_B(\gamma) = \sum_{\substack{B' \in \cP, \\ B' \subseteq B}} w_{B'}(\gamma)
	\]
	for all $B\in \cQ$.
\end{lemma}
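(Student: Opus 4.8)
The plan is to reduce the statement to a block-by-block computation using the definition $\cP(\Delta^{g_0,\dots,g_k})$ and the fact that $w_B$ is, by definition, the winding number of the composite $\det_B \circ \gamma = \det \circ \pi_B \circ \gamma$. The key structural observation is that if $\cP \preceq \cQ$ and $B \in \cQ$, then $\pi_B(\rmU(\cP))$ is exactly $\rmU(\cP_B)$, where $\cP_B = \{B' \in \cP : B' \subseteq B\}$ is the partition of $B$ induced by $\cP$; this uses that each block $B'$ of $\cP$ is entirely contained in a single block of $\cQ$, so the blocks of $\cP$ partition $B$. Thus, when $\gamma$ is a path in $\rmU(\cP)$, the path $\pi_B \circ \gamma$ is a path in the block-diagonal group $\rmU(\cP_B) \subseteq \rmU(B)$.

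The heart of the matter is then a determinant identity: for a block-diagonal unitary $u \in \rmU(\cP_B) \subseteq \rmU(B)$, one has $\det(u) = \prod_{B' \in \cP_B} \det(\pi_{B'}(u))$, i.e. the determinant of a block-diagonal matrix is the product of the determinants of its blocks. Applying this pointwise along the path $\pi_B \circ \gamma$ gives
\[
\det_B \circ \gamma = \det \circ \pi_B \circ \gamma = \prod_{\substack{B' \in \cP \\ B' \subseteq B}} \det \circ \pi_{B'} \circ \gamma = \prod_{\substack{B' \in \cP \\ B' \subseteq B}} \det_{B'} \circ \gamma,
\]
that is, $\gamma_B = \prod_{B' \subseteq B} \gamma_{B'}$ as paths in $\rmU(1)$, where the product is the pointwise product. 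Now I invoke the multiplicativity of the winding number under pointwise products, namely property (3) in the list following Definition~\ref{d:path-invariants}: $w(\gamma \cdot \gamma') = w(\gamma) + w(\gamma')$. Iterating over the finitely many blocks $B' \in \cP$ with $B' \subseteq B$ yields
\[
w_B(\gamma) = w(\gamma_B) = w\Bigl( \prod_{\substack{B' \in \cP \\ B' \subseteq B}} \gamma_{B'} \Bigr) = \sum_{\substack{B' \in \cP \\ B' \subseteq B}} w(\gamma_{B'}) = \sum_{\substack{B' \in \cP \\ B' \subseteq B}} w_{B'}(\gamma),
\]
which is exactly the claimed formula.

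I do not expect a serious obstacle here; the only points requiring a little care are bookkeeping ones. First, one should check that $\gamma$ genuinely lands in $\rmU(\cP) \subseteq \rmU(\cQ)$ so that both sides of the equation make sense, which is immediate from $\cP \preceq \cQ$ implying $\rmU(\cP) \subseteq \rmU(\cQ)$ (stated in the preliminaries). Second, when $B' \subseteq B$ the block $\pi_{B'}$ factors the same way whether one regards $B'$ as a block of $\cP$ inside $\rmU(\cP)$ or inside $\rmU(\cQ)$ — this is just that the projection onto a coordinate block does not depend on which coarser block-diagonal group one works in. Third, the identification $\rmU(B') \cong \rmU(|B'|)$ used implicitly when speaking of $\det_{B'}$ is harmless. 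With these observations in place, the proof is a two-line application of the determinant-of-block-diagonal identity followed by additivity of the winding number.
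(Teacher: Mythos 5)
Your proof is correct and follows exactly the same route as the paper's: restrict to a block $B$ of $\cQ$, observe that the restriction is block-diagonal with sub-blocks the $B'\in\cP$ contained in $B$, use that the determinant of a block-diagonal matrix is the product of the block determinants to get $\gamma_B=\prod_{B'\subseteq B}\gamma_{B'}$, and conclude by additivity of the winding number under pointwise products. The extra bookkeeping remarks you add are accurate but not needed beyond what the paper records.
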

\begin{proof}
	Every matrix from $\rmU(\cP)$ when restricted to $\rmU(B)$ for  $B\in \cQ$ has a block diagonal form with sub-blocks $B'\in \cP$ satisfying $B'\subseteq B$.
	As the determinant of a block-diagonal matrix equals the product of its block determinants, we get
	\[
		\gamma_B = \prod_{\substack{B' \in \cP, \\ B' \subseteq B}} \gamma_{B'}.
	\]
	The statement follows from additivity of the winding number.
\end{proof}

\begin{lemma}\label{l:cdot-W}
	Suppose that $\gamma$ is a path in $\rmU(\cP)$ and $V \in \rmU(\cP)$.
	Consider the path $V\gamma$ given by $[V\gamma](t) = V\cdot \gamma(t)$.
	Then, $w_B(V\gamma) = w_B(\gamma)$ for all $B\in \cP$.
\end{lemma}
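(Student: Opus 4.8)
The plan is to reduce everything to the additivity of the winding number under pointwise products, recorded just before Proposition~\ref{p:U1-invariants}. The key observation is that each projection $\pi_B\colon \rmU(\cP)\to\rmU(B)$ is a group homomorphism: since $\rmU(\cP) = \prod_{B'\in\cP}\rmU(B')$ as a group, $\pi_B$ is just the projection onto the factor indexed by $B$. Hence for every $t\in[0,1]$ we have $\pi_B\big((V\gamma)(t)\big) = \pi_B(V)\,\pi_B(\gamma(t))$, and applying the (multiplicative) determinant gives
\[
	(V\gamma)_B(t) = \det\nolimits_B(V)\cdot\gamma_B(t),
\]
that is, $(V\gamma)_B$ is the pointwise product of the constant path $t\mapsto\det_B(V)\in\rmU(1)$ with $\gamma_B$.

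Next I would invoke the additivity property $w(\gamma\cdot\gamma') = w(\gamma) + w(\gamma')$ together with the fact that a constant path has winding number $0$ (its lift is constant). This yields
\[
	w_B(V\gamma) = w\big((V\gamma)_B\big) = 0 + w(\gamma_B) = w_B(\gamma),
\]
where the $0$ is the winding number of the constant path at $\det_B(V)$, and this is exactly the desired identity for each $B\in\cP$.

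Alternatively, one can argue straight from Definition~\ref{d:path-invariants}: if $\hat\gamma_B$ is a lift of $\gamma_B$ through $\sigma(x)=e^{2\pi i x}$ and $\det_B(V)=e^{2\pi i\theta}$ for some $\theta\in\R$, then $t\mapsto\hat\gamma_B(t)+\theta$ is a lift of $(V\gamma)_B$, so $w_B(V\gamma) = \big(\hat\gamma_B(1)+\theta\big)-\big(\hat\gamma_B(0)+\theta\big) = \hat\gamma_B(1)-\hat\gamma_B(0) = w_B(\gamma)$.

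I do not expect any real obstacle here; the only point that requires a moment's care is that left multiplication by a fixed $V\in\rmU(\cP)$ interacts with the block-minor maps as a mere multiplicative constant, which is precisely the homomorphism property of $\pi_B$ noted above.
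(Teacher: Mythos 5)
Your proof is correct and follows the same route as the paper: the paper likewise observes that $V\gamma$ is the pointwise product of a constant path with $\gamma$ and appeals to additivity of the winding number (with the constant path contributing $0$). You simply spell out the intermediate step that $\pi_B$ and $\det_B$ turn this into a pointwise product in $\rmU(1)$, which the paper leaves implicit.
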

\begin{proof}
	The path $V\gamma$ is the pointwise product of the constant path at $V$ and the path $\gamma$.
	The statement follows from the additivity of the winding number.
\end{proof}

\section{Refutation of the simplex conjecture}\label{s:simplex-conj}
\subsection{Admissible maps}\label{s:admissible}
\begin{definition}\label{d:admissible}
	A continuous map $u\colon \Delta^{S_n}_{(k)} \to \rmU(n)$ is called \emph{weakly admissible} if
	\[
		u(\Delta^{g_0, \ldots, g_l}) \subset U_{g_0}\rmU(\cP(\Delta^{g_0, \ldots, g_l}))
	\]
	holds for any subsimplex $\Delta^{g_0, \ldots, g_l} \subset \Delta^{S_n}_{(k)}$.
	If, additionally, $u(g) = U_g$ for all $g\in S_n$, the map $u$ is called \emph{admissible}.
\end{definition}
The simplex conjecture says that admissible maps exist for all $n$ and $k$.
Davidson and Katsoulis established this when either $n\leq 3$ or $k=1$.
In Theorem~\ref{t:no-admissible}, we show that the conjecture fails already for $n=4$ and $k=2$.

\begin{remark}
	We note that in \cite[Conjecture 3.31]{DK11}, the assumption on maps is defined differently.
	We leave it as an exercise to the reader to show that the two definitions are equivalent.
	However, this is not necessary since we use our definition to disprove the piecewise conjugacy conjecture which is shown by Davidson and Katsoulis to be weaker than the simplex conjecture~\cite[Theorem 3.33]{DK11}.
	Therefore, our results imply that the simplex conjecture in its original form is false, even when the equivalence of the two definitions is not established.
\end{remark}

We also introduced the notion of weak admissibility since weakly admissible maps arise naturally as unitary equivalences between certain multivariable dynamical systems (see the proof of Theorem~\ref{t:pc-not-ue}).
It turns out that the existence of weakly admissible maps is equivalent to the existence of admissible maps.
\begin{lemma}\label{l:wadm-iff-adm}
	If there exists a weakly admissible map $u\colon \Delta^{S_n}_{(k)}\to \rmU(n)$, then there exists an admissible map $u'\colon \Delta^{S_n}_{(k)}\to \rmU(n)$.
\end{lemma}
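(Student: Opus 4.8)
The plan is to modify a given weakly admissible map $u$ on each vertex and then propagate the correction over higher skeleta, using the homotopy-theoretic machinery built in Section~\ref{s:Un-invariants}. A weakly admissible map satisfies $u(g) \in U_g\rmU(\cP(\Delta^g)) = U_g\rmU(n) = \rmU(n)$, which gives no constraint at vertices, so there is no reason to expect $u(g) = U_g$ on the nose. The first step is to fix this at the level of the $0$-skeleton: for each $g\in S_n$ choose $V_g = U_g u(g)^{-1} \in \rmU(n)$, so that $V_g u(g) = U_g$. The issue is that these pointwise left-multiplications must be made consistent across edges and triangles, i.e.\ we need a single continuous map $x \mapsto V(x)$ on $\Delta^{S_n}_{(k)}$ with $V(g) = V_g$ at each vertex such that $x\mapsto V(x)u(x)$ is again weakly admissible; then $u' = V\cdot u$ will be admissible.

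The key observation making this possible is Lemma~\ref{l:cdot-W}: if $V(x)$ takes values in $U_{g_0}\rmU(\cP(\Delta^{g_0,\ldots,g_l}))U_{g_0}^{-1}$ on the subsimplex $\Delta^{g_0,\ldots,g_l}$, then left-multiplication by $V(x)$ preserves the coset $U_{g_0}\rmU(\cP(\Delta^{g_0,\ldots,g_l}))$, hence preserves weak admissibility on that subsimplex. Using Lemma~\ref{l:subsimplex-subpartition}, these conjugated subgroups are nested compatibly with the face relations, so one can try to build $V$ by induction on skeleta. On the $0$-skeleton set $V(g) = V_g$. To extend over an edge $\Delta^{g_0,g_1}$ one needs a path in $U_{g_0}\rmU(\cP(\Delta^{g_0,g_1}))U_{g_0}^{-1}$ from $V_{g_0}$ to $V_{g_1}$; such a path exists because $\rmU(\cP)$ is path-connected and the two endpoints do lie in this subgroup — here one checks $V_{g_i} = U_{g_i}u(g_i)^{-1}$ lies in $U_{g_0}\rmU(\cP(\Delta^{g_0,g_1}))U_{g_0}^{-1}$ using $U_{g_0}^{-1}U_{g_i}\in \rmU(\cP(\Delta^{g_0,g_1}))$ (since $g_0^{-1}g_i$ generates a subgroup of the orbit group, hence permutes each block) together with $u(g_i)\in U_{g_i}\rmU(n)$... wait, that last inclusion is vacuous, so one instead needs that $u(g_i)^{-1}$ does not destroy the coset; the honest statement is that $V_{g_i} = U_{g_i}u(g_i)^{-1}$ need not lie in the subgroup, so the naive induction fails and one must be more careful. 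The correct approach: first replace $u$ by $V\cdot u$ where $V$ is \emph{any} continuous extension to $\Delta^{S_n}_{(k)}$ of the vertex assignment $g\mapsto V_g$ (which exists since $\rmU(n)$ is connected and $\Delta^{S_n}_{(k)}$ is a CW complex, extending cell by cell and using $\pi_m(\rmU(n))$ considerations only up to dimension $k$; for $k\le 2$ we only need $\pi_1(\rmU(n)) = \Z$ to vanish, which it does not, so one picks the extension with trivial winding). This $u' = V\cdot u$ now has $u'(g) = U_g$, but it may no longer be weakly admissible.

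To repair weak admissibility while keeping the vertex values, observe that on each subsimplex $\sigma = \Delta^{g_0,\ldots,g_l}$ the map $u'$ lands in $U_{g_0}\rmU(n)$ but we want it in the smaller coset $U_{g_0}\rmU(\cP(\sigma))$; equivalently $U_{g_0}^{-1}u'$ should land in $\rmU(\cP(\sigma))$. One retracts: since $u'$ at each vertex $g_i$ of $\sigma$ already satisfies $U_{g_0}^{-1}u'(g_i) = U_{g_0}^{-1}U_{g_i} \in \rmU(\cP(\sigma))$ by Lemma~\ref{l:subsimplex-subpartition} and the remark after it, the obstruction to homotoping $u'|_\sigma$ rel vertices into $\rmU(\cP(\sigma))$ lives in relative homotopy groups $\pi_j(\rmU(n),\rmU(\cP(\sigma)))$ for $j\le l\le k$. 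I expect the main obstacle to be exactly this: showing these retractions can be performed compatibly on all subsimplices at once, i.e.\ an obstruction-theory argument organized by increasing dimension, using that $\rmU(n)/\rmU(\cP)$ is highly connected (its low homotopy vanishes because $\rmU(n)\to\rmU(n)/\rmU(\cP)$ kills enough) and that the face maps are compatible via Lemma~\ref{l:subsimplex-subpartition}. Once every $u'|_\sigma$ has been pushed into the correct coset without moving vertices, the resulting map is admissible, completing the proof. The delicate bookkeeping is ensuring the homotopy chosen on a face agrees with the one already fixed on its boundary, which is the standard price of an inductive extension over a CW complex.
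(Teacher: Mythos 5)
Your proposal is built on a misreading of the definition that makes the problem look much harder than it is, and the argument you then sketch is never completed. The vertex condition of weak admissibility is \emph{not} vacuous: by the paper's conventions, $\cP(\Delta^{g})$ is the finest partition $\{\{1\},\ldots,\{n\}\}$, so $\rmU(\cP(\Delta^{g}))$ is the group $\mathrm{DU}(n)$ of \emph{diagonal} unitaries, not all of $\rmU(n)$. Hence weak admissibility already forces $U_g^* u(g)$ to be diagonal at every vertex. This is the whole point: one writes $U_g^* u(g) = e^{iT_g}$ with $T_g$ diagonal self-adjoint, sets $v(x) = e^{-i\sum_g x_g T_g}$ (a continuous $\mathrm{DU}(n)$-valued map, obtained by linearly interpolating the logarithms over the simplex), and takes $u' = u\cdot v$. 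Then $u'(g) = U_g$, and weak admissibility is preserved because $\mathrm{DU}(n) \subseteq \rmU(\cP)$ for every partition $\cP$, so right multiplication by $v(x)$ never leaves the coset $U_{g_0}\rmU(\cP(\Delta^{g_0,\ldots,g_l}))$. No induction over skeleta and no homotopy theory is needed.

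Because you assumed the vertex constraint is empty, you are forced into an obstruction-theoretic repair of weak admissibility, and that part of your proposal does not close: you yourself note that the naive inductive extension of the correction $V$ fails, your fallback ("pick the extension with trivial winding") is not justified, and the final step is only the expectation that relative obstructions in $\pi_j(\rmU(n),\rmU(\cP(\sigma)))$ vanish. That expectation is moreover false as stated: $\rmU(n)/\rmU(\cP)$ is not highly connected in general (for the finest partition it is the flag manifold $\rmU(n)/T^n$, which has nontrivial $\pi_2$), so genuine obstructions could appear already on the $2$-skeleton --- precisely the case the paper cares about. In short, the gap is both the incorrect starting point (misidentifying $\rmU(\cP(\Delta^g))$) and the fact that the substitute argument is an unverified sketch rather than a proof.
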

\begin{proof}
	Denote by $\mathrm{DU}(n)$ the group of diagonal unitary matrices.

	Since for every $g\in S_n$, $\cP(\Delta^{g})$ is the partition of $\bar n$ into singletons, we have $\rmU(\cP(\Delta^{g})) = \mathrm{DU}(n)$.
	Therefore, $U_g^*u(g)$ is diagonal and there are diagonal self-adjoint matrices $T_g$ such that $U_g^*u(g) = e^{iT_g}$ for all $g\in S_n$.
	Define a map $v\colon \Delta^{S_n}_{(k)} \to \mathrm{DU}(n)$ by $v(x) = e^{-i\sum_{g\in S_n} x_g T_g}$ for $x = \sum_{g\in S_n} x_g g \in \Delta^{S_n}_{(k)}$.
	We claim that $u'= u\cdot v$ is admissible.

	Indeed, we have $u'(g) = U_g e^{iT_g} e^{-iT_g} = U_g$ for all $g\in S_n$.
	Furthermore, we have
	\[
		u'(\Delta^{g_0, \ldots, g_l}) \subset U_{g_0} \rmU(\cP(\Delta^{g_0, \ldots, g_l})) \mathrm{DU}(n) = U_{g_0}\rmU(\cP(\Delta^{g_0, \ldots, g_l}))
	\]
	since multiplication by a diagonal matrix does not change the block structure.
	Therefore, $u'$ is admissible and we demonstrated the existence of an admissible map.
\end{proof}
\subsection{Homotopy of admissible maps on the 1-skeleton}\label{s:admissible-1-skel}
We now classify admissible maps on the $1$-skeleton of $\Delta^{S_n}$ up to homotopy.
We say that two admissible maps are homotopic if there is a homotopy between them that is admissible at every point.
Since the admissibility condition fixes the values of the maps at the vertices, a homotopy between admissible maps is constant on the vertices.
Therefore, every homotopy is determined by a collection of homotopies of paths determined by edges.
This is where the invariants constructed so far will come into play.

Let $u\colon \Delta^{S_n}_{(1)} \to \rmU(n)$ be an admissible map.
Let $\epsilon^{g_0, g_1}\colon [0,1] \to \Delta^{g_0, g_1}$ be the edge parametrizations given by
\begin{equation}\label{e:edge-parametrization}
	\epsilon^{g_0, g_1}(t) = tg_0 + (1-t)g_1.
\end{equation}
Define continuous paths $\gamma^{g_0, g_1} = U_{g_0}^* (u\circ \epsilon^{g_0, g_1})$ connecting $\mathds 1$ and $U_{g_0}^*U_{g_1}$.
From admissibility it follows that $\gamma^{g_0, g_1}$ is a path in $\rmU(\cP(\Delta^{g_0, g_1}))$ for all edges $\Delta^{g_0, g_1}$ of $\Delta^{S_n}_{(1)}$.

Consider the set
\[
	C = \{ (\Delta^{g_0, g_1}, B) \colon \Delta^{g_0, g_1} \text{ is an edge of } \Delta^{S_n}_{(2)}, B\in \cP(\Delta^{g_0, g_1}) \}.
\]
To an admissible map $u$, we associate the invariants
\begin{align*}
	w_n(u)     & = {\left(w_{B}(\gamma^{g_0, g_1})\right)}_{(\Delta^{g_0, g_1}, B)\in C} \in \R^C,   \\
	\uwn[n](u) & = {\left(\uwn[B](\gamma^{g_0, g_1})\right)}_{(\Delta^{g_0, g_1}, B)\in C} \in \Z^C.
\end{align*}
\begin{proposition}\label{p:admissible-bijection}
	The invariant $\uwn[n]$ defines a bijection between homotopy classes of admissible maps on $\Delta^{S_n}_{(1)}$ and $\Z^C$.
\end{proposition}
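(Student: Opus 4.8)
The plan is to reduce the statement to Lemma~\ref{l:paths-in-UP}, applied one edge at a time. The key point is that an admissible map on the $1$-skeleton is completely determined by its restrictions to the edges, because admissibility forces $u(g)=U_g$ at every vertex $g\in S_n$. First I would check that the assignment $u\mapsto (\gamma^{g_0,g_1})_{\Delta^{g_0,g_1}}$ is a bijection between admissible maps $\Delta^{S_n}_{(1)}\to\rmU(n)$ and families of continuous paths, where $\gamma^{g_0,g_1}$ ranges over \emph{all} paths in $\rmU(\cP(\Delta^{g_0,g_1}))$ from $U_{g_0}^*U_{g_1}$ to $\mathds 1$. For this one needs that both endpoints lie in $\rmU(\cP(\Delta^{g_0,g_1}))$: this is clear for $\mathds 1$, and for $U_{g_0}^*U_{g_1}=U_{g_0^{-1}g_1}$ it holds because $g_0^{-1}g_1$ preserves each of its own cycles, which are exactly the blocks of $\cP(\Delta^{g_0,g_1})=\cP(g_0^{-1}g_1)$. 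The edges of $\Delta^{S_n}_{(1)}$ meet only at vertices, where every admissible map takes the fixed value $U_g$, so any family of such paths glues back to a globally continuous map; weak admissibility along $\Delta^{g_0,g_1}$ is precisely the condition that $\gamma^{g_0,g_1}=U_{g_0}^*(u\circ\epsilon^{g_0,g_1})$ take values in $\rmU(\cP(\Delta^{g_0,g_1}))$.

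Next I would pass to homotopy. A homotopy through admissible maps is necessarily constant on the vertices, hence restricts on each edge to an endpoint-preserving homotopy of paths inside $\rmU(\cP(\Delta^{g_0,g_1}))$ (after left translation by $U_{g_0}^*$); conversely, a family of such homotopies of paths, one per edge, glues to an admissible homotopy, again because distinct edges interact only at the frozen vertices. Therefore the bijection of the previous step descends to a bijection
\[
\{\text{homotopy classes of admissible }u\}\;\cong\;\prod_{\Delta^{g_0,g_1}}\pi_1\!\left(U_{g_0}^*U_{g_1},\,\mathds 1\,\middle|\,\rmU(\cP(\Delta^{g_0,g_1}))\right),
\]
the product taken over all edges of $\Delta^{S_n}_{(1)}$.

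Finally I would invoke Lemma~\ref{l:paths-in-UP}: $\uwn[\cP(\Delta^{g_0,g_1})]$ is a bijection from $\pi_1(U_{g_0}^*U_{g_1},\mathds 1\mid\rmU(\cP(\Delta^{g_0,g_1})))$ onto $\Z^{\cP(\Delta^{g_0,g_1})}$. Taking the product over all edges and using that $C$ is, by definition, the disjoint union $\bigsqcup_{\Delta^{g_0,g_1}}\cP(\Delta^{g_0,g_1})$, the right-hand side of the display becomes $\prod_{\Delta^{g_0,g_1}}\Z^{\cP(\Delta^{g_0,g_1})}=\Z^C$. Unwinding the identifications shows that the composite bijection sends $[u]$ to ${\left(\uwn[B](\gamma^{g_0,g_1})\right)}_{(\Delta^{g_0,g_1},B)\in C}=\uwn[n](u)$, which is exactly the assertion.

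As for difficulty: once Lemma~\ref{l:paths-in-UP} is in hand, every step is bookkeeping, so there is no serious obstacle. The one point that deserves an explicit argument is the gluing direction — that an arbitrary tuple in $\Z^C$ is realized by a genuine admissible map, and that two admissible maps with the same invariant are admissibly homotopic. Both follow from the fact that $\Delta^{S_n}_{(1)}$ is a one-dimensional complex whose edges interact only through the vertices, where admissibility pins the maps down completely, so no global compatibility condition arises beyond what happens edge by edge.
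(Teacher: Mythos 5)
Your proof is correct and takes essentially the same route as the paper: identify admissible maps (and their admissible homotopies) edge by edge with families of paths in $\rmU(\cP(\Delta^{g_0,g_1}))$ with endpoints pinned at the vertices, then apply Lemma~\ref{l:paths-in-UP} to each factor and use that $C$ is the disjoint union of the blocks over all edges. The only difference is cosmetic --- you follow the literal orientation of $\epsilon^{g_0,g_1}$ and so record the paths as running from $U_{g_0}^*U_{g_1}$ to $\mathds 1$, while the paper states the endpoints in the opposite order --- and this has no bearing on the bijection.
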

\begin{proof}
	As explained above, an admissible map $u \colon \Delta^{S_n}_{(1)} \to \rmU(n)$ defines a collection of paths $\gamma^{g_0, g_1}$ from $\mathds1$ to $U_{g_0^{-1}g_1}$ in $\rmU(\cP(\Delta^{g_0, g_1}))$ for all edges $\Delta^{g_0, g_1}$ of $\Delta^{S_n}$.
	Conversely, given such a collection of paths, we can define an admissible map $u$ by setting $u(tg_0 + (1-t)g_1) = U_{g_0}\gamma^{g_0, g_1}(t)$.

	Analogously, two admissible maps are homotopic if and only if the restriction of the maps to each edge are homotopic as paths.
	Therefore, the set of homotopy classes of admissible maps on $\Delta^{S_n}_{(1)}$ is in bijection with
	\begin{equation}\label{eq:prod-pi}
		\prod_{\Delta^{g_0, g_1}} \pi_1\left(\mathds{1}, U_{g_0^{-1}g_1}\mid \rmU(\cP(\Delta^{g_0, g_1}))\right),
	\end{equation}
	where the product is over all edges $\Delta^{g_0, g_1}$ of $\Delta^{S_n}$.
	The invariant $\uwn[n]$ is just a product of $\uwn[\cP(\Delta^{g_0, g_1})](\gamma^{g_0, g_1})$ for all $\Delta^{g_0, g_1}$ and thus, by Lemma~\ref{l:paths-in-UP}, it defines a bijection of \eqref{eq:prod-pi} with
	\[
		\bigoplus_{\Delta^{g_0, g_1}} \Z^{\cP(\Delta^{g_0, g_1})} = \Z^C.
	\]
\end{proof}

Furthermore, define $\delta_n = (\delta_B(\gamma^{g_0, g_1}))_{(\Delta^{g_0, g_1}, B)\in C} = \uwn[n](\gamma) - w_n(\gamma) \in [0,1)^C$.
This quantity is independent of the choice of $u$, as the endpoints of the paths are fixed by admissibility.
\begin{lemma}\label{l:delta}
	We have
	\[
		\delta_B(\gamma^{g_0, g_1}) = \frac{1 + (-1)^{|B|}}{4} = \begin{cases}
			\frac12 & \text{if } \left|B\right| \text{ is even}, \\
			0       & \text{if } \left|B\right| \text{ is odd}.
		\end{cases}
	\]
	In particular, $\delta_n \in \{0, \frac12\}^C$.
\end{lemma}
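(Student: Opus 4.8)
The plan is to reduce the whole computation to the endpoints of the $\rmU(1)$-valued path $\gamma_B := \det_B\circ\gamma^{g_0,g_1}$. Indeed, by Proposition~\ref{p:U1-invariants}\ref{p:U1-invariants:delta} the defect $\delta_B(\gamma^{g_0,g_1}) = \delta(\gamma_B)$ depends only on $\gamma_B(0)$ and $\gamma_B(1)$, so once these two points are identified the statement becomes an elementary calculation with lifts.

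First I would pin down the endpoints of $\gamma^{g_0,g_1}$ itself. From the edge parametrization \eqref{e:edge-parametrization} we have $\epsilon^{g_0,g_1}(0) = g_1$ and $\epsilon^{g_0,g_1}(1) = g_0$, so admissibility of $u$ gives $\gamma^{g_0,g_1}(0) = U_{g_0}^*u(g_1) = U_{g_0}^*U_{g_1} = U_\tau$, where $\tau = g_0^{-1}g_1$, and $\gamma^{g_0,g_1}(1) = U_{g_0}^*U_{g_0} = \mathds 1$. Since $\cP(\Delta^{g_0,g_1}) = \cP(\tau)$, the block $B$ is a single orbit of $\langle\tau\rangle$, i.e.\ one cycle of $\tau$ of length $|B|$; in particular $\tau(B) = B$, so $\pi_B(U_\tau)$ is exactly the permutation matrix on $\langle e_i : i\in B\rangle$ of the $|B|$-cycle $\tau|_B$. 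An $\ell$-cycle is a product of $\ell - 1$ transpositions, so its permutation matrix has determinant $(-1)^{\ell-1}$; hence $\det_B(U_\tau) = (-1)^{|B|-1}$, and $\gamma_B$ is a path in $\rmU(1)$ from $(-1)^{|B|-1}$ to $1$ (the case $|B| = 1$ just giving a constant path at $1$).

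It then remains to read the defect off these endpoints. Choosing a lift $\hat\gamma_B$ of $\gamma_B$, we have $w(\gamma_B) = \hat\gamma_B(1) - \hat\gamma_B(0)$ with $\sigma(\hat\gamma_B(0)) = (-1)^{|B|-1}$ and $\sigma(\hat\gamma_B(1)) = 1$. If $|B|$ is odd, then $(-1)^{|B|-1} = 1$, so $\gamma_B$ is a loop, $w(\gamma_B)\in\Z$, and $\delta_B(\gamma^{g_0,g_1}) = 0$. If $|B|$ is even, then $(-1)^{|B|-1} = -1$, so $\hat\gamma_B(0)\in\tfrac12 + \Z$ while $\hat\gamma_B(1)\in\Z$, giving $w(\gamma_B)\in\tfrac12 + \Z$ and $\delta_B(\gamma^{g_0,g_1}) = \ceil*{w(\gamma_B)} - w(\gamma_B) = \tfrac12$. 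These two cases combine into $\frac{1+(-1)^{|B|}}{4}$, and since this value always lies in $\{0,\tfrac12\}$, the concluding assertion $\delta_n\in\{0,\tfrac12\}^C$ is immediate. I do not anticipate any genuine obstacle here; the only points that need a little care are the orientation convention (keeping track of which endpoint of $\gamma^{g_0,g_1}$ is $\mathds 1$) and the sign of a single $\ell$-cycle, including the degenerate fixed-point case $\ell = 1$.
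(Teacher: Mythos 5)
Your proof is correct and follows essentially the same route as the paper: identify the endpoints of $\gamma_B=\det_B\circ\gamma^{g_0,g_1}$, observe that $\det_B(U_{g_0^{-1}g_1})=(-1)^{|B|-1}$ because $g_0^{-1}g_1|_B$ is a single $|B|$-cycle, and read the defect off the endpoints via Proposition~\ref{p:U1-invariants}\ref{p:U1-invariants:delta}. The only divergence is the orientation of $\gamma^{g_0,g_1}$ (you follow the literal formula~\eqref{e:edge-parametrization}, which runs from $U_{g_0^{-1}g_1}$ to $\mathds 1$, whereas the paper's proof takes it from $\mathds 1$ to $U_{g_0^{-1}g_1}$); this is immaterial here since a winding number in $\tfrac12\Z$ has the same defect under path reversal.
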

\begin{proof}
	The path $\gamma = \gamma^{g_0, g_1}$ starts at $\mathds1$ and ends at $U_{g_0^{-1}g_1}$.
	We have $\det_B (\mathds1) = 1$ and $\det_B (U_{g_0^{-1}g_1}) = \operatorname{sign} (g_0^{-1}g_1|_B)$.
	By definition, $B\in \cP(\Delta^{g_0, g_1}) = \cP(g_0^{-1}g_1)$ is an orbit of the permutation group generated by $g_0^{-1}g_1$ and, hence, $g_0^{-1}g_1|_B$ is a full cycle.
	A cycle is an even permutation if and only if its size is odd:
	\[
		\det_B (U_{g_0^{-1}g_1}) =
		\begin{cases}
			-1 & \text{if } \left|B\right| \text{ is even}, \\
			1  & \text{if } \left|B\right| \text{ is odd}.
		\end{cases}
	\]
	We conclude that the path $\gamma_B = \det_B \circ \gamma$ in $\rmU(1)$ starts at $1$ and ends at either $1$ if $|B|$ is odd and $-1$ otherwise.
	In the first case, it is a loop and in the second case it lacks exactly half of a full turn to be a loop.
	By Definition~\ref{d:path-invariants}, this is equivalent to the formula for $\delta_B$ from the statement.
\end{proof}

\subsection{The obstruction}\label{s:obstruction}
We finally disprove the simplex conjecture.
The main idea is to use the invariants developed above to show that admissible maps defined on the one-dimensional skeleton cannot be extended to the two-dimensional skeleton.

We say that an admissible map $u\colon \Delta^{S_n}_{(k)} \to \rmU(n)$ is \emph{boundary} if it can be extended to an admissible map on $\Delta^{S_n}_{(k+1)}$.
Obviously, there exists an admissible map on a $k+1$-skeleton if and only if there exists a boundary admissible map on the $k$-skeleton.
We will disprove the existence of boundary admissible maps on the $k=1$-skeleton of $\Delta^{S_n}$ for $n = 4$.

Let $u\colon \Delta^{S_n}_{(1)} \to \rmU(n)$ be an admissible map.
Let $\Delta^{g_0, g_1, g_2} $ be a face (a $2$-simplex) of $\Delta^{S_n}_{(2)}$.
Consider the path $\epsilon^{g_0, g_1, g_2} = \epsilon^{g_0, g_1} * \epsilon^{g_1, g_2} * (\epsilon^{g_0, g_2})^{-1}$, where the edge parametrizations $\epsilon^{g_0, g_1}$ were defined in~\eqref{e:edge-parametrization}. %
It is a loop at $g_0$ parametrizing $\partial \Delta^{g_0, g_1, g_2}$.

Then, $\gamma^{g_0, g_1, g_2} = U_{g_0}^*\cdot  (u\circ \epsilon^{g_0, g_1, g_2} )$ is a loop at $\mathds1$.
We have
\begin{align}\label{eq:gamma-3}
	\gamma^{g_0, g_1, g_2} & = \gamma^{g_0, g_1} \ast (U_{g_0}^*U_{g_1}\gamma^{g_1, g_2}) \ast (U_{g_0}^*U_{g_0}\gamma^{g_0, g_2})^{-1} =\nonumber \\
	                       & = \gamma^{g_0, g_1} \ast (U_{g_0^{-1}g_1}\gamma^{g_1, g_2}) \ast (\gamma^{g_0, g_2})^{-1},
\end{align}
where we used that concatenation and inversion of paths commutes with pointwise multiplication by the constant path $U_{g_0}^*$.
This decomposition is illustrated in Figure~\ref{fig:loop}.
\begin{figure}[htb]
	\centering
	\begin{tikzpicture}[scale=4]

		\coordinate (E) at (0,0);
		\node[left] at (E) {$\mathds 1$};
		\fill (E) circle[radius=0.3pt];

		\coordinate (G02) at (1,0);
		\node[right] at (G02) {$U_{g_0^{-1} g_2}$};
		\fill (G02) circle[radius=0.3pt];

		\coordinate (G01) at ($(E)!0.5!(G02) + (0,{1/2})$);
		\node[above] at (G01) {$U_{g_0^{-1} g_1}$};
		\fill (G01) circle[radius=0.3pt];

		\coordinate (G03) at ($(E)!0.5!(G02) - (0,{1/2})$);
		\node[above right] at (G03) {$U_{g_1^{-1} g_2}$};
		\fill (G03) circle[radius=0.3pt];

		\draw[midarrow=0.5] (E) -- (G02) node[midway, below] {$\gamma^{g_0, g_2}$};
		\draw[midarrow=0.5] (E) -- (G01) node[midway, left] {$\gamma^{g_0, g_1}$};
		\draw[dashed, midarrow=0.5] (E) -- (G03) node[midway, below left] {$\gamma^{g_1, g_2}$};
		\draw[midarrow=0.5] (G01) -- (G02) node[midway, right] {$U_{g_0^{-1}g_1}\gamma^{g_1, g_2}$};

		\node at (barycentric cs:E=1,G01=1,G02=1) {$\circlearrowright \gamma^{g_0, g_1, g_2}$};

	\end{tikzpicture}
	\caption{$\gamma^{g_0, g_1, g_2}$ as a concatenation of paths.}
	\label{fig:loop}
\end{figure}
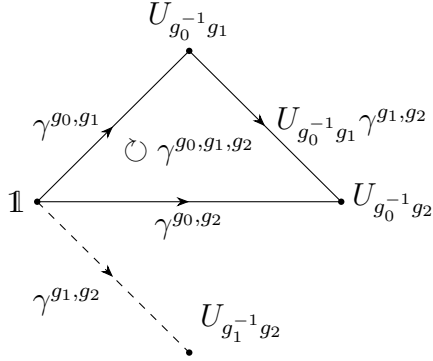

Furthermore, for all $0\leq i < j \leq 2$, $\gamma^{g_i, g_j}$ takes values in $\rmU(\cP(\Delta^{g_i, g_j})) \subset \rmU(\cP(\Delta^{g_0, g_1, g_2}))$.
Since the matrix $U_{g_0^{-1}g_1}$ is also an element of $\rmU(\cP(\Delta^{g_0, g_1, g_2}))$, we conclude that $\gamma^{g_0, g_1, g_2}$ is a loop in $\rmU(\cP(\Delta^{g_0, g_1, g_2}))$.
\begin{lemma}\label{l:boundary-if-contractible}
	An admissible map $u \colon \Delta^{S_n}_{(1)} \to \rmU(n)$ is boundary if and only if the loops $\gamma^{g_0, g_1, g_2}$ are null-homotopic in $\rmU(\cP(\Delta^{g_0, g_1, g_2}))$ for all faces $\Delta^{g_0, g_1, g_2}$ of $\Delta^{S_n}_{(2)}$.
\end{lemma}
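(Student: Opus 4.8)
The plan is to characterize when $u$ extends over each $2$-simplex separately, since the $2$-skeleton is obtained from the $1$-skeleton by attaching one $2$-cell along each face $\Delta^{g_0,g_1,g_2}$, and admissibility of an extension is a local condition on each face. So $u$ is boundary if and only if, for every face $\Delta^{g_0,g_1,g_2}$, the restriction $u|_{\partial\Delta^{g_0,g_1,g_2}}$ extends to a map on the filled $2$-simplex taking values in $U_{g_0}\rmU(\cP(\Delta^{g_0,g_1,g_2}))$; indeed, Lemma~\ref{l:subsimplex-subpartition} guarantees that such a face-by-face extension automatically respects the admissibility constraint on every sub-edge, so the local extensions glue to a genuine admissible map on $\Delta^{S_n}_{(2)}$.

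Next I would translate the single-face extension problem into a statement about loops. Fix a face $\Delta^{g_0,g_1,g_2}$ and write $\cP = \cP(\Delta^{g_0,g_1,g_2})$. Multiplying by the constant $U_{g_0}^*$ is a homeomorphism $U_{g_0}\rmU(\cP)\to\rmU(\cP)$, so $u|_{\partial\Delta^{g_0,g_1,g_2}}$ extends over the disk inside $U_{g_0}\rmU(\cP)$ if and only if the loop $\gamma^{g_0,g_1,g_2} = U_{g_0}^*\cdot(u\circ\epsilon^{g_0,g_1,g_2})$ extends over the disk inside $\rmU(\cP)$. A based loop extends to a map of the disk precisely when it is null-homotopic, which gives exactly the stated criterion.

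The remaining point, and the one requiring a little care, is the passage between ``extends over the boundary of the simplex as parametrized by $\epsilon^{g_0,g_1,g_2}$'' and ``extends over the solid $2$-simplex as a continuous map.'' Here I would use that $\partial\Delta^{g_0,g_1,g_2}$ is homeomorphic to $S^1$ with the loop $\epsilon^{g_0,g_1,g_2}$ a parametrization of it, and that a continuous map $S^1\to Y$ extends continuously over the disk $D^2$ if and only if it is null-homotopic in $Y$ — this is the standard fact that $[S^1,Y]=\pi_1(Y)$ (with basepoint, since the corner $g_0$ is fixed) and that a class is trivial iff the loop bounds a disk. Applying this with $Y = \rmU(\cP)$ yields the equivalence. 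The only genuine obstacle is bookkeeping: one must check that the three sub-edges $\Delta^{g_i,g_j}$ of the face carry partitions $\cP(\Delta^{g_i,g_j})\preceq\cP$, so that the filled-in map still satisfies admissibility on those edges — but this is precisely Lemma~\ref{l:subsimplex-subpartition}, and the values on the edges themselves are unchanged, so nothing new is violated. Combining the face-by-face reductions completes the proof.
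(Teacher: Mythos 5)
Your proposal is correct and follows essentially the same route as the paper's (much terser) proof: reduce to extending over each $2$-cell separately, translate the coset $U_{g_0}\rmU(\cP)$ to $\rmU(\cP)$ by the constant unitary, and use the standard fact that a based loop extends over the disk iff it is null-homotopic, with Lemma~\ref{l:subsimplex-subpartition} handling admissibility on the sub-edges. Your write-up simply makes explicit the bookkeeping that the paper leaves implicit.
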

\begin{proof}
	Restricting an admissible map on $\Delta^{S_n}_{(2)}$ to each $2$-simplex yields a null-homotopy of the associated loop.
	Conversely, if every loop in $\rmU(\cP(\Delta^{g_0,g_1,g_2}))$ is null-homotopic, then these homotopies glue to form the required extension to the $2$-skeleton, making the map boundary.
\end{proof}

We can compute an obstruction to null-homotopy in the above lemma as a system of linear integer equations.
Let
\[
	R = \{ (\Delta^{g_0, g_1, g_2}, B) \colon \Delta^{g_0, g_1, g_2} \text{ is a face of }\Delta^{S_n}_{(2)}, B\in \cP(\Delta^{g_0, g_1, g_2}) \}
\]
and recall that we have earlier defined
\[
	C = \{ (\Delta^{g_0, g_1}, B) \colon \Delta^{g_0, g_1} \text{ is an edge of } \Delta^{S_n}_{(2)}, B\in \cP(\Delta^{g_0, g_1}) \}.
\]
We define an integer matrix $\cM$ with rows indexed by $R$ and columns indexed by $C$ by
\[
	\cM_{(\Delta^{g_0, g_1, g_2}, B), (\Delta^{h_0, h_1}, B')} = \begin{cases}
		1  & \text{ if } B'\subseteq B \text{ and } (h_0, h_1) = (g_0, g_1), \\
		1  & \text{ if } B'\subseteq B \text{ and } (h_0, h_1) = (g_1, g_2), \\
		-1 & \text{ if } B'\subseteq B \text{ and } (h_0, h_1) = (g_0, g_2), \\
		0  & \text{ otherwise}.
	\end{cases}
\]
We also define a column-vector $\cD$ indexed by $R$ as
\[
	\cD = \cM \delta_n,
\]
where we view $\delta_n$ as a vector in $(\frac12 \Z)^C \subset \mathbb{Q}^C$ (see Lemma~\ref{l:delta}).
In the proof of Proposition~\ref{p:boundary-equation}, we show that $\cD \in \Z^R$.

\begin{proposition}\label{p:boundary-equation}
	An admissible map $u\colon \Delta^{S_n}_{(1)} \to \rmU(n)$ is boundary if and only if $x = \uwn[n](u)$ is a solution to the system of linear integer equations
	\begin{equation}{\label{eq:lin-main}}
		\cM x = \cD.
	\end{equation}
	In particular, there exists a boundary admissible map if and only if there exists an integer solution $x\in \Z^C$ to \eqref{eq:lin-main}.
\end{proposition}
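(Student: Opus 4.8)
The plan is to translate the topological condition from Lemma~\ref{l:boundary-if-contractible} --- null-homotopy of each loop $\gamma^{g_0,g_1,g_2}$ in $\rmU(\cP(\Delta^{g_0,g_1,g_2}))$ --- into the stated linear-algebraic condition $\cM x = \cD$, block by block. First I would fix a face $\Delta^{g_0,g_1,g_2}$ and a block $B \in \cP(\Delta^{g_0,g_1,g_2})$, and compute the winding number $w_B(\gamma^{g_0,g_1,g_2})$ using the decomposition \eqref{eq:gamma-3}. By additivity of the winding number under concatenation and inversion (the properties recorded after Definition~\ref{d:path-invariants}), and by Lemma~\ref{l:cdot-W} applied to the constant factor $U_{g_0^{-1}g_1} \in \rmU(\cP(\Delta^{g_0,g_1,g_2}))$, this gives
\[
	w_B(\gamma^{g_0,g_1,g_2}) = w_B(\gamma^{g_0,g_1}) + w_B(\gamma^{g_1,g_2}) - w_B(\gamma^{g_0,g_2}),
\]
where each term on the right is the $B$-winding number of an edge path viewed inside $\rmU(\cP(\Delta^{g_0,g_1,g_2}))$ via the inclusion $\cP(\Delta^{g_i,g_j}) \preceq \cP(\Delta^{g_0,g_1,g_2})$ (Lemma~\ref{l:subsimplex-subpartition}). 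Now Lemma~\ref{l:winding-subpart} rewrites each such term as the sum $\sum_{B' \subseteq B} w_{B'}(\gamma^{g_i,g_j})$ over the sub-blocks $B' \in \cP(\Delta^{g_i,g_j})$ contained in $B$ --- which is precisely the $(\Delta^{g_0,g_1,g_2},B)$-row of $\cM$ applied to the vector $w_n(u)$. Hence $w_B(\gamma^{g_0,g_1,g_2}) = (\cM\, w_n(u))_{(\Delta^{g_0,g_1,g_2},B)}$ for every $(\Delta^{g_0,g_1,g_2},B) \in R$, i.e.\ $\cM\, w_n(u) = w_n(\gamma^{\bullet})$ where the right side collects the face-loop winding numbers.

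Next I would invoke Lemma~\ref{l:paths-in-UP}: the loop $\gamma^{g_0,g_1,g_2}$ is contractible in $\rmU(\cP(\Delta^{g_0,g_1,g_2}))$ if and only if $w_B(\gamma^{g_0,g_1,g_2}) = 0$ for all $B \in \cP(\Delta^{g_0,g_1,g_2})$. Combined with Lemma~\ref{l:boundary-if-contractible}, $u$ is boundary iff $\cM\, w_n(u) = 0$. It remains to replace $w_n(u)$ by the integer vector $\uwn[n](u) = x$. Since $\uwn[n](u) = w_n(u) + \delta_n$ by the definition of $\delta_n$, and $\delta_n$ depends only on the endpoints of the edge paths (hence is the same for every admissible map, as recorded before Lemma~\ref{l:delta}), we get $\cM\, w_n(u) = \cM(x - \delta_n) = \cM x - \cM\delta_n = \cM x - \cD$. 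Therefore $\cM\, w_n(u) = 0$ if and only if $\cM x = \cD$, which is the claimed equivalence. The final ``in particular'' clause then follows from Proposition~\ref{p:admissible-bijection}: every $x \in \Z^C$ arises as $\uwn[n](u)$ for some admissible $u$, so a boundary admissible map exists iff \eqref{eq:lin-main} has an integer solution.

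Along the way I need to check the assertion, made before the proposition, that $\cD \in \Z^R$ --- a priori $\cD = \cM\delta_n$ only lies in $(\tfrac12\Z)^R$ since $\delta_n \in \{0,\tfrac12\}^C$ by Lemma~\ref{l:delta}. This is where I would be most careful. The cleanest argument is the one just given: $\cD = \cM x - \cM\, w_n(u)$ for any admissible $u$, and $\cM x \in \Z^R$ trivially while $\cM\, w_n(u) = w_n(\gamma^{\bullet})$ is a vector of winding numbers of \emph{loops}, hence integer-valued by Proposition~\ref{p:U1-invariants}\ref{p:U1-invariants:w}. So $\cD$ is a difference of two integer vectors. (One could alternatively verify integrality directly from the parity formula in Lemma~\ref{l:delta} and the combinatorics of which sub-blocks of the edge partitions lie in a given face-block, but the indirect argument is shorter and avoids a case analysis.) I expect no serious obstacle beyond keeping the bookkeeping of partitions and the signs in the definition of $\cM$ consistent with the orientation conventions in \eqref{eq:gamma-3}; the real content has already been front-loaded into Lemmas~\ref{l:paths-in-UP}, \ref{l:winding-subpart}, \ref{l:cdot-W} and \ref{l:boundary-if-contractible}, and this proposition is essentially their assembly.
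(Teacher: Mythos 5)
Your proposal is correct and follows essentially the same route as the paper: the same decomposition \eqref{eq:gamma-3} with additivity, Lemma~\ref{l:cdot-W}, and Lemma~\ref{l:winding-subpart} to identify the face-loop winding numbers with the rows of $\cM\, w_n(u)$, the same substitution $w_n(u) = \uwn[n](u) - \delta_n$, and the same indirect integrality argument for $\cD$ as a difference of the integer vectors $\cM\,\uwn[n](u)$ and the loop winding numbers. Nothing is missing.
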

\begin{proof}
	We use the criterion from Lemma~\ref{l:boundary-if-contractible}.
	By \eqref{eq:gamma-3}, we have $\gamma^{g_0, g_1, g_2} = \gamma^{g_0, g_1} \ast (U_{g_0^{-1}g_1}\gamma^{g_1, g_2}) \ast (\gamma^{g_0, g_2})^{-1}$.
	By Lemma~\ref{l:paths-in-UP}, the loop is contractible in $\rmU(\cP(\Delta^{g_0, g_1, g_2}))$ if and only if $w_B(\gamma^{g_0, g_1, g_2}) = w(\det_B \circ \gamma^{g_0, g_1, g_2}) = 0$ for all $B\in \cP(\Delta^{g_0, g_1, g_2})$.
	By additivity of the winding number,
	\[
		w_B(\gamma^{g_0, g_1, g_2}) = w_B(\gamma^{g_0, g_1}) + w_B(U_{g_0^{-1}g_1}\gamma^{g_1, g_2}) - w_B(\gamma^{g_0, g_2}).
	\]
	Furthermore, by Lemma~\ref{l:cdot-W}, we have $w_B(U_{g_0^{-1}g_1}\gamma^{g_1, g_2}) = w_B(\gamma^{g_1, g_2})$ and thus the above expression simplifies to
	\begin{equation}\label{eq:face-winding}
		w_B(\gamma^{g_0, g_1, g_2}) = w_B(\gamma^{g_0, g_1}) + w_B(\gamma^{g_1, g_2}) - w_B(\gamma^{g_0, g_2}).
	\end{equation}
	We claim that the right-hand side is equal to ${\left[\cM w_n(u)\right]}_{(\Delta^{g_0, g_1, g_2}, B)}$.

	Indeed, by Lemma~\ref{l:winding-subpart} applied to $\cP = \cP(\Delta^{g_i, g_j})$ and $\cQ = \cP(\Delta^{g_0, g_1, g_2})$, we have
	\[
		w_B(\gamma^{g_i, g_j}) = \sum \{w_{B'}(\gamma^{g_i, g_j}) \mid B' \in \cP(\Delta^{g_i, g_j}), B'\subseteq B\}
	\]
	for all $0\leq i < j \leq 2$ and $B\in \cP(\Delta^{g_0, g_1, g_2})$.
	Substitute every term on the right-hand side of~\eqref{eq:face-winding} according to the above formula.
	Then, the term $w_{B'}(\gamma^{h_0, h_1})$ appears in  the right-hand side of~\eqref{eq:face-winding} if and only if $B'\subseteq B$ and $(h_0, h_1) = (g_0, g_1), (g_1, g_2)$, or $(g_0, g_2)$, with a coefficient $1$ in the first two cases and $-1$ in the last case.
	This is consistent with the definition of $\cM$:
	\begin{multline*}
		w_B(\gamma^{g_0, g_1, g_2}) \\= \sum_{(\Delta^{h_0, h_1}, B')\in C} \cM_{(\Delta^{g_0, g_1, g_2}, B), (\Delta^{h_0, h_1}, B')} w_{B'}(\gamma^{h_0, h_1}) \\= {\left[\cM w_n(u)\right]}_{(\Delta^{g_0, g_1, g_2}, B)}.
	\end{multline*}

	Let $y$ be the column vector in $\R^R$ given by $y_{(\Delta^{g_0, g_1, g_2}, B)} = w_B(\gamma^{g_0, g_1, g_2})$.
	We conclude that
	\[
		y = \cM w_n(u) = \cM\uwn[n](u) - \cM \delta_n = \cM \uwn[n](u) - \cD.
	\]
	Since $y$ and $\cM \uwn[n](u)$ have integer entries, we get that $\cD$ has integer entries as well.
	From Lemma~\ref{l:boundary-if-contractible} it follows that $u$ is boundary if and only if $w_B(\gamma^{g_0, g_1, g_2}) = 0$ for all $(\Delta^{g_0, g_1, g_2}, B)\in R$, i.e., $y=0$.
	We have shown that this is equivalent to \eqref{eq:lin-main}.
	This concludes the proof of the first part of the proposition and the ``only if'' of the second part.

	Finally, if $x\in \Z^C$ is a solution to \eqref{eq:lin-main}, then there exists an admissible map $u$ such that $\uwn[n](u) = x$ by Proposition~\ref{p:admissible-bijection}.
	We have already shown that this implies that $u$ is boundary.
	We conclude that there exists a boundary admissible map on the $1$-skeleton if and only if there exists an integer solution to \eqref{eq:lin-main}.
\end{proof}
In many cases, it is much easier to determine whether a system of linear equations has a solution over a field rather than over the ring of integers.
Equation~\eqref{eq:lin-main} admits a \emph{rational} solution $x = \delta_n$. %
It is never an integer solution since it has entries in $\{0, \frac12\}$.
Furthermore, it also shows that the equation has a solution modulo any odd prime.
Consequently, the only case where there might fail to be a solution is when considering the equation modulo 2.
The proof of Theorem~\ref{t:no-admissible} is based on this observation.

\begin{theorem}\label{t:no-admissible}
	There is no weakly admissible map on the $2$-skeleton of $\Delta^{S_4}$.
\end{theorem}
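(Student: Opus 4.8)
The plan is to convert the statement into an arithmetic question about the integer linear system of Proposition~\ref{p:boundary-equation} and then obstruct it modulo $2$.

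First I would use Lemma~\ref{l:wadm-iff-adm} to reduce from weak admissibility to admissibility: there is a weakly admissible map on $\Delta^{S_4}_{(2)}$ if and only if there is an admissible one. By the definition of a boundary map, there is an admissible map on the $2$-skeleton if and only if there is a boundary admissible map on the $1$-skeleton, and by Proposition~\ref{p:boundary-equation} (with $n=4$) the latter exists precisely when $\cM x = \cD$ has a solution $x\in\Z^{C}$. So the theorem is equivalent to the assertion that $\cM x = \cD$ has no integer solution.

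To prove this it suffices to show that $\cM x = \cD$ has no solution modulo $2$, since any integer solution would reduce to one. (The prime $2$ is the only candidate obstruction: by Lemma~\ref{l:delta} the vector $\delta_n$ is a rational solution with entries in $\{0,\tfrac12\}$, so $2\delta_n\in\Z^{C}$ satisfies $\cM(2\delta_n)=2\cD$, and cancelling the factor $2$, which is invertible modulo every odd prime, shows solvability over $\Z/p$ for all odd $p$, as well as over $\mathbb{Q}$.) Writing $\overline{\cM}$ and $\overline{\cD}$ for the reductions modulo $2$, I must therefore show that $\overline{\cD}$ does not lie in the $\mathbb{F}_2$-column span of $\overline{\cM}$, equivalently that there is a vector $v\in\mathbb{F}_2^{R}$ with $v^{\mathsf{T}}\overline{\cM}=0$ and $v^{\mathsf{T}}\overline{\cD}=1$.

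This final step is an explicit finite computation for $n=4$. The simplex $\Delta^{S_4}$ has $24$ vertices, $\binom{24}{2}=276$ edges and $\binom{24}{3}=2024$ faces; for each edge $\Delta^{g_0,g_1}$ one records the orbit partition $\cP(g_0^{-1}g_1)$ and for each face $\Delta^{g_0,g_1,g_2}$ the orbit partition $\cP(g_0^{-1}g_1, g_0^{-1}g_2)$, which together determine the index sets $C$ and $R$, the matrix $\cM$, and the vector $\cD=\cM\delta_n$ (with $\delta_n$ given blockwise by Lemma~\ref{l:delta}). Reducing modulo $2$ and performing Gaussian elimination over $\mathbb{F}_2$ shows that the system is inconsistent; exhibiting an explicit certificate $v$ of small support makes the inconsistency checkable by hand. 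I expect the main obstacle to be not conceptual depth but faithfully encoding the combinatorics of $\Delta^{S_4}$ — the $276$ edge-partitions, the $2024$ face-partitions, and the incidence signs defining $\cM$ — so that the mod-$2$ computation genuinely certifies the stated theorem; it also helps to pass to a suitable subcomplex of $\Delta^{S_4}_{(2)}$ on which the obstruction already appears, both to shrink the computation and to cross-check it.
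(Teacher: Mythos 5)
Your proposal is correct and follows essentially the same route as the paper: reduce via Lemma~\ref{l:wadm-iff-adm} and Proposition~\ref{p:boundary-equation} to integer solvability of $\cM x = \cD$, observe that $\delta_n$ gives a rational solution so the only possible obstruction is modulo $2$, and then certify inconsistency over $\mathbb{F}_2$ by a finite computation (the paper does this by comparing the ranks of $\cM$ and the augmented matrix $(\cM \mid \cD)$, which are $462$ and $463$ respectively).
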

\begin{proof}
	By Lemma~\ref{l:wadm-iff-adm}, there is no weakly admissible map on the $2$-skeleton if and only if there is no admissible map on the $2$-skeleton.
	By definition, there is an admissible map on the $2$-skeleton if and only if there is a boundary admissible map on the $1$-skeleton.
	By Proposition~\ref{p:boundary-equation}, this happens if and only if the system \eqref{eq:lin-main} of linear equations has an integer solution.
	A computer-assisted verification\footnote{The code can be found in an ancillary file on the arXiv page of this paper.} shows that for $n=4$, the rank of $\cM$ over $\mathbb{F}_2$ is $462$ while the augmented matrix $(\cM \mid \cD)$ has rank $463$ over $\mathbb{F}_2$. %
	By the Rouch\'e--Capelli theorem~\cite[Theorem~2.38]{SR12}, there is no solution over $\mathbb{F}_2$ and thus no integer solution either.
\end{proof}

Therefore, the simplex conjecture of Davidson and Katsoulis~\cite[Conjecture 3.31]{DK11} is false for $n=4$.
We next apply this result to construct piecewise conjugate multivariable dynamical systems with non-isomorphic tensor algebras.

\section{Multivariable dynamical systems}\label{s:mds-intro}
\subsection{Tensor and covariance algebras}
Having established a topological obstruction to the simplex conjecture, we now apply these results to investigate the tensor algebras arising from multivariable dynamical systems.
We begin by recalling key definitions and notation.
For a detailed exposition, we refer the reader to the original source~\cite{DK11}, where these concepts were first introduced.

A \emph{multivariable dynamical system} $(X, \sigma)$ is a compact space $X$ equipped with a collection of continuous maps $\sigma_i\colon X\to X$ for $1\leq i \leq n$.
To indicate the value of $n$, we also say that $(X,\sigma)$ is an $n$-variable dynamical system.

For a multivariable dynamical system $(X, \sigma)$, the \emph{covariance algebra} $\cA_0(X, \sigma)$ is the universal associative complex algebra generated by a subalgebra $C(X)$ and elements $\fs_1,\ldots, \fs_n$, subject to the relations $f\fs_i = \fs_i(f\circ \sigma_i)$ for all $f\in C(X)$ and $1\leq i \leq n$.

An operator algebra is a closed, but not necessarily self-adjoint, subalgebra of $\cB(\cH)$ for some Hilbert space $\cH$.
A linear operator $\phi\colon \cA_1 \to \cA_2$ between operator algebras is called completely contractive if the induced maps $\phi_m \colon \mathbb{M}_m(\cA_1)\to \mathbb{M}_m(\cA_2)$ are contractive in the operator norm induced from $\mathbb{M}_m(\cB(\cH)) \cong \cB(\cH^m)$ for all $m\geq 1$.
Operator algebras are called completely isometrically isomorphic if there exists a completely contractive isomorphism of algebras with a completely contractive inverse.

Let $\cH$ be a Hilbert space and $\cB(\cH)$ be the algebra of bounded operators on $\cH$.
A representation $\rho \colon \mathcal{A}_0(X,\sigma) \to \mathcal{B}(\mathcal{H})$ is called \emph{row-contractive} if its restriction to $C(X)$ is a $*$-representation, and it satisfies the contraction condition
\[
	\left\|\sum_{i=1}^{n}\rho(\mathfrak{s}_i)\rho(\mathfrak{s}_i)^*\right\| \leq 1.
\]
The \emph{tensor algebra} $\cA(X, \sigma)$ is the operator algebra, universal with respect to row-contractive representations: if $\rho\colon \cA_0(X, \sigma) \to \cB(\cH)$ is a row-contractive representation, then there exists a unique completely contractive representation $\tilde{\rho}\colon \cA(X, \sigma) \to \cB(\cH)$ such that $\tilde{\rho}|_{\cA_0(X, \sigma)} = \rho$.
There exists a faithful row-contractive representation of the covariance algebra, namely the full Fock representation.
This ensures that $\cA_0(X, \sigma)$ is a subalgebra in $\cA(X, \sigma)$.
From the universal property, it follows that $\cA(X, \sigma)$ is unique up to completely isometric isomorphism.

Davidson and Katsoulis~\cite{DK11} posed the problem of identifying necessary and sufficient conditions for isomorphism of tensor algebras \(\mathcal{A}(X, \sigma)\) and \(\mathcal{A}(Y, \tau)\) associated with two multivariable dynamical systems \((X, \sigma)\) and \((Y, \tau)\).
To address this, they introduced several notions of equivalence and conjugacy between multivariable dynamical systems.
Let us recall their definitions.

Consider a multivariable dynamical system \((X, \sigma)\).
If \(\varphi: Y \to X\) is a homeomorphism from another space \(Y\), we can define a new multivariable dynamical system \((Y, \varphi^{-1} \sigma \varphi)\), where \((\varphi^{-1} \sigma \varphi)_i = \varphi^{-1} \circ \sigma_i \circ \varphi\).
In this case, \(\varphi\) is called a \emph{conjugacy} between \((X, \sigma)\) and \((Y, \varphi^{-1} \sigma \varphi)\).

It turns out that conjugacy is too restrictive for classifying tensor algebras.
From the classification of tensor algebras of multivariable dynamical systems on one-dimensional spaces~\cite[Theorem 3.25]{DK11} it follows that there are examples of non-conjugate systems having isomorphic tensor algebras.
Davidson and Katsoulis introduced a weaker notion called \emph{piecewise conjugacy}.

\begin{definition}\label{d:piecewise-conjugacy}
	Two $n$-variable dynamical systems $(X, \sigma)$, $(X, \tau)$ are \emph{piecewise equivalent} if there exists an open cover $\{W_g\}_{g\in S_n}$ of $X$ such that $\tau_i|_{W_g} = \sigma_{g^{-1}(i)}|_{W_g}$ for all $i\in \bar n$ and $g\in S_n$.
	Systems $(X, \sigma)$ and $(Y, \tau)$ are \emph{piecewise conjugate} if there exists a homeomorphism $\varphi\colon X\to Y$ such that $(X, \sigma)$ and $(X, \varphi^{-1}\tau\varphi)$ are piecewise equivalent.
\end{definition}
Piecewise equivalence implies piecewise conjugacy (by taking the identity homeomorphism).
In practice, it is easier to work with piecewise equivalence than with conjugacy since there is no need to account for the choice of homeomorphism between base spaces.

Davidson and Katsoulis proved that piecewise conjugacy is an isomorphism invariant for tensor algebras and conjectured that the invariant is complete~\cite[Conjecture 3.26]{DK11}.
However, this is not the case as we show in Theorem~\ref{t:pc-not-iso}.

To proceed, we introduce a more algebraic notion of equivalence that often coincides with tensor algebra isomorphism.
The definition below is a special case of unitary equivalence of C*-correspondences.
In the context of multivariable dynamical systems, it was studied by Kakariadis and Katsoulis~\cite{KK14}, and by Katsoulis and Ramsey~\cite{KR22}.
\begin{definition}\label{d:unitary-equivalence}
	A \emph{unitary equivalence} between $n$-variable dynamical systems $(X, \sigma)$, $(X, \tau)$ on $X$ is a continuous map $u\colon X\to \rmU(n)$ such that
	\[
		u_{ij}(x) \neq 0 \implies \tau_i(x) =  \sigma_j(x)
	\]
	for all $x\in X$ and $i,j\in \bar n$.
	Systems $(X, \sigma)$ and $(Y, \tau)$ are \emph{unitarily equivalent after conjugation} if there exists a homeomorphism $\varphi\colon X\to Y$ such that the systems $(X, \sigma)$ and $(X, \varphi^{-1}\tau\varphi)$ are unitarily equivalent.
\end{definition}
It is easy to see that unitary equivalence implies piecewise equivalence.
Indeed, since unitaries cannot have zero rows or columns, for every  $x\in X$, there exists a permutation $g\in S_n$ such that $u_{ij}(x)\neq 0$ and thus $u_{i,g^{-1}(i)}$ is non-zero in some neighborhood of $x$.
Therefore, $\tau_i$ equals $\sigma_{g^{-1}(i)}$ in this neighborhood and the systems are piecewise equivalent.
Our approach towards disproving the piecewise conjugacy conjecture basically boils down to showing that the converse is not true.

In a more general context of noncommutative dynamical systems, Katsoulis and Ramsey~\cite{KR22} established that unitary equivalence after conjugation is equivalent to a completely isometric isomorphism of tensor algebras.
In the special case of systems without fixed points, we show in Theorem~\ref{t:uc-is-iso} that even an algebraic isomorphism between tensor algebras implies unitary equivalence after conjugation.

Let $(X, \sigma)$ be a multivariable dynamical system.
For $x\in X$, we define a partition $\cP_\sigma(x)$ by setting $i \sim_{\cP_\sigma(x)} j$ if and only if $\sigma_i(x) = \sigma_j(x)$.
\begin{lemma}\label{l:unitary-conj-block}
	Let $u\colon X\to \rmU(n)$ be a unitary equivalence between $(X, \sigma)$ and  $(X, \tau)$.
	Consider a point $x\in X$ and a permutation $g\in S_n$ satisfying $\tau_i(x) = \sigma_{g^{-1}(i)}(x)$ for all $i\in \bar n$. %
	Then, $u(x) \in U_g \rmU(\cP_\sigma(x))$.
\end{lemma}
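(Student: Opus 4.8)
The plan is to reduce the assertion to a single block-diagonality claim. Set $V = U_g^* u(x)$. Since $U_g^* = U_g^{-1}$, membership $u(x) \in U_g\rmU(\cP_\sigma(x))$ is equivalent to $V \in \rmU(\cP_\sigma(x))$, and $V$ is automatically unitary, being a product of unitaries. Recalling that $\rmU(\cP_\sigma(x))$ consists precisely of the unitary matrices with the block-diagonal form induced by $\cP_\sigma(x)$, it therefore suffices to show that $V_{kj} = 0$ whenever $k$ and $j$ lie in different blocks of $\cP_\sigma(x)$, i.e., whenever $\sigma_k(x) \neq \sigma_j(x)$.

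First I would compute the entries of $V$ in terms of those of $u(x)$. From the defining relation $U_g e_i = e_{g(i)}$ one reads off $(U_g)_{ki} = \delta_{k,\,g(i)}$, hence $(U_g^*)_{ki} = \delta_{i,\,g(k)}$, and consequently $V_{kj} = \sum_{i} (U_g^*)_{ki}\, u_{ij}(x) = u_{g(k),\,j}(x)$ for all $k,j\in\bar n$. Next I would invoke the two hypotheses together. If $V_{kj}\neq 0$, then $u_{g(k),j}(x)\neq 0$, so $\tau_{g(k)}(x) = \sigma_j(x)$ by the definition of unitary equivalence (Definition~\ref{d:unitary-equivalence}). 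On the other hand, applying the conjugation hypothesis $\tau_i(x) = \sigma_{g^{-1}(i)}(x)$ at the index $i = g(k)$ gives $\tau_{g(k)}(x) = \sigma_k(x)$. Combining the two identities yields $\sigma_k(x) = \sigma_j(x)$, that is, $k \sim_{\cP_\sigma(x)} j$. Thus every nonzero entry of $V$ is supported inside a single block of $\cP_\sigma(x)$, so $V \in \rmU(\cP_\sigma(x))$ and $u(x) = U_g V \in U_g\rmU(\cP_\sigma(x))$.

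I do not expect any genuine obstacle here: the argument is a direct index computation with no analytic or topological content. The only points demanding care are the bookkeeping in the identity $V_{kj} = u_{g(k),j}(x)$ — in particular, getting $g$ versus $g^{-1}$ the right way around when transposing the permutation matrix — and remembering to apply the conjugation hypothesis at the index $i = g(k)$ rather than at $i = k$.
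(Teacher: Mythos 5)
Your proof is correct and is essentially identical to the paper's: both set $V = U_g^*u(x)$, compute $V_{kj} = u_{g(k),j}(x)$, and combine the unitary-equivalence condition with the hypothesis $\tau_{g(k)}(x)=\sigma_k(x)$ to conclude that nonzero entries of $V$ lie within blocks of $\cP_\sigma(x)$. The only difference is that you spell out the permutation-matrix index bookkeeping in slightly more detail.
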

\begin{proof}
	It is enough to show that $U_g^*u(x) = U_{g^{-1}} u(x) \in \rmU(\cP_\sigma(x))$.
	For this, we must show that $(U_{g^{-1}} u(x))_{ij} \neq 0$ implies $i \sim_{\cP_\sigma(x)} j$.
	Indeed $(U_{g^{-1}}u(x))_{i j} = u_{g(i) j}(x)\neq 0$, implies $\tau_{g(i)}(x) = \sigma_{j}(x)$ by the definition of unitary equivalence.
	On the other hand, by assumption we have $\tau_{g(i)}(x) = \sigma_{i}(x)$.
	Combining with the previous equality, we get $\sigma_i(x) = \sigma_j(x)$, which precisely means that $i \sim_{\cP_\sigma(x)} j$.
\end{proof}

\subsection{Unitary equivalence for systems without fixed points}\label{s:ue-is-iso}
In this section, we establish an equivalence between unitary equivalence after conjugation and isomorphism of tensor algebras for multivariable dynamical systems without fixed points.
We begin by introducing necessary ring-theoretic concepts.

A ring with marked ideal is a pair $(A, I)$, where $A$ is a unital ring and $I$ is an ideal of $A$.
If $(B, J)$ is some other ring with marked ideal, a ring homomorphism $\alpha\colon A\to B$ is called filtered if $\alpha(I) \subset J$.
In this case, we write $\alpha\colon (A, I)\to (B, J)$.

If $(A, I)$ is a ring with marked ideal, then there is a natural structure of $A/I$-bimodule on $I/I^2$ given by $(a_1 + I)(j + I^2)(a_2 + I) = a_1ja_2 + I^2$.
A filtered homomorphism $\alpha\colon (A,I)\to (B, J)$ induces a homomorphism $\alpha_0\colon A/I\to B/J$ and a linear map $\alpha_1\colon I/I^2\to J/J^2$ which is $A/I$-bilinear with the $A/I$-bimodule structure on $J/J^2$ induced by $\alpha_0$.
If $\alpha$ is an isomorphism such that $\alpha(I) = J$ (so that the inverse of $\alpha$ is also filtered), then $\alpha_0$ and $\alpha_1$ are also isomorphisms.

Let $\cA(X, \sigma)$ be the tensor algebra of a multivariable dynamical system $(X, \sigma)$.
We set $I_{X, \sigma}$ to be the ideal of $\cA(X,\sigma)$ generated by $\fs_1, \ldots, \fs_n$.
We slightly abuse the notation and also denote by $I_{X,\sigma}$ the ideal of $\cA_0(X, \sigma)$ generated by $\fs_1, \ldots, \fs_n$.
This way, we consider rings with marked ideals $(\cA(X, \sigma), I_{X, \sigma})$ and $(\cA_0(X, \sigma), I_{X, \sigma})$.
\begin{lemma}\label{l:associated-bimodule}
	We have $\cA(X, \sigma)/I_{X, \sigma} \cong C(X)$ and $M_\sigma = I_{X, \sigma}/I_{X, \sigma}^2$ is a free right $C(X)$-module with basis $\hat\fs_i = \fs_i + I_{X, \sigma}^2$ for $i=1,\ldots, n$.
	The left $C(X)$-action on $I_{X, \sigma}/I_{X, \sigma}^2$ is given by $f\cdot \hat\fs_i = \hat\fs_i \cdot (f\circ \sigma_i)$ for all $f\in C(X)$.
	The same holds if the tensor algebra is replaced with the covariance algebra $\cA_0(X, \sigma)$.
\end{lemma}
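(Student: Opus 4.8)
The plan is to read off the statement from the natural $\Z_{\geq 0}$-grading that both algebras carry. I will argue first for the covariance algebra $\cA_0 = \cA_0(X,\sigma)$ and then indicate the routine adaptation to the tensor algebra.

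\emph{Grading.} Repeatedly applying the covariance relation $f\fs_i = \fs_i(f\circ\sigma_i)$ lets one rewrite every word in the generators as a linear combination of monomials $\fs_{i_1}\cdots\fs_{i_k}f$ with $f\in C(X)$; thus $\cA_0 = \sum_{k\geq 0}\cA_0^{(k)}$, where $\cA_0^{(k)}$ is the span of the degree-$k$ monomials (so $\cA_0^{(0)} = C(X)$), and one checks directly that $\cA_0^{(j)}\cA_0^{(k)}\subseteq\cA_0^{(j+k)}$. To see the sum is direct I invoke the faithful full Fock representation $\pi$ on $\cF = \bigoplus_{k\geq 0}E^{\otimes k}$, where $E = C(X)^n$ is the correspondence with right action by coordinatewise multiplication and left action $f\cdot(g_1,\dots,g_n) = \bigl((f\circ\sigma_1)g_1,\dots,(f\circ\sigma_n)g_n\bigr)$, $\pi|_{C(X)}$ is the left $C(X)$-action on $\cF$, and $\pi(\fs_i)$ is creation by the $i$-th standard vector $\xi_i$. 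Since $\pi(\cA_0^{(k)})$ carries $E^{\otimes m}$ into $E^{\otimes(m+k)}$, an identity $\sum_k a_k = 0$ with $a_k\in\cA_0^{(k)}$ forces $\pi(a_k) = 0$ for all $k$, hence $a_k = 0$ by faithfulness; so $\cA_0 = \bigoplus_{k\geq 0}\cA_0^{(k)}$.

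\emph{The two components.} The ideal $I = I_{X,\sigma}$ generated by $\fs_1,\dots,\fs_n$ is exactly the part of positive degree: since the generators are homogeneous of degree $1$ one has $I\subseteq\bigoplus_{k\geq 1}\cA_0^{(k)}$, while $\fs_{i_1}\cdots\fs_{i_k}f = \fs_{i_1}\cdot(\fs_{i_2}\cdots\fs_{i_k}f)\in I$ for $k\geq 1$ gives the reverse inclusion; the same counting with two factors shows $I^2 = \bigoplus_{k\geq 2}\cA_0^{(k)}$. Hence $\cA_0 = C(X)\oplus I$, so the composite $C(X)\hookrightarrow\cA_0\twoheadrightarrow\cA_0/I$ is an isomorphism of algebras, and $M_\sigma = I/I^2\cong\cA_0^{(1)}$ as $C(X)$-bimodules (the $\cA_0/I$-bimodule structure on $I/I^2$ corresponds to multiplication by $\cA_0^{(0)} = C(X)$ inside $\cA_0$). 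Now $\cA_0^{(1)} = \sum_i\fs_i C(X)$, and the right-$C(X)$-linear surjection $E = C(X)^n\to\cA_0^{(1)}$, $\xi_i\mapsto\fs_i$, is injective because evaluating at the vacuum vector $1\in E^{\otimes 0} = C(X)$ gives $\pi\bigl(\sum_i\fs_i f_i\bigr)(1) = \sum_i\xi_i\otimes f_i = (f_1,\dots,f_n)\in E$, which vanishes only if every $f_i$ does. So $\cA_0^{(1)}\cong E$ is free over $C(X)$ with basis $\hat\fs_i$, and the left action is computed from the covariance relation: $f\cdot\hat\fs_i = f\fs_i + I^2 = \fs_i(f\circ\sigma_i) + I^2 = \hat\fs_i\cdot(f\circ\sigma_i)$.

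\emph{Tensor algebra and the hard point.} For $\cA = \cA(X,\sigma)$ one runs the identical argument, taking $\cA^{(k)}$ to be the closed span of the degree-$k$ monomials (the grading now coming from the gauge circle action $\fs_i\mapsto z\fs_i$, $f\mapsto f$) and $I_{X,\sigma}$, $I_{X,\sigma}^2$ the corresponding closed ideals; the full Fock representation remains faithful on $\cA$ and $\cA^{(1)}\cong E = C(X)^n$ is still free of rank $n$, which yields the claim. The only genuinely nontrivial point is the directness of the grading in the first step — that the covariance relations generate all relations — which is exactly what faithfulness of the Fock representation supplies; the remainder is bookkeeping with the grading together with a one-line use of the covariance relation.
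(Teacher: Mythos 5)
Your argument is correct in substance but organized differently from the paper's. The paper disposes of the whole lemma by citing \cite[Corollary 3.4]{DK11}, which asserts the unique decomposition $a = a_{\emptyset} + \sum_i \fs_i a_i + r(a)$ with $r(a)\in I_{X,\sigma}^2$ for the tensor algebra, and then declares the covariance case analogous; you instead work bottom-up from the faithful Fock representation. For the covariance algebra your proof is complete and more self-contained than the paper's: the rewriting of words into $\fs_{i_1}\cdots\fs_{i_k}f$, the directness of the grading via the shift in Fock degree, the identification $I = \bigoplus_{k\geq 1}\cA_0^{(k)}$, $I^2 = \bigoplus_{k\geq 2}\cA_0^{(k)}$, and the vacuum-evaluation argument for freeness of $\cA_0^{(1)}$ are all sound. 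What your route buys is an explicit proof rather than a citation; what it costs shows up in the tensor-algebra half.

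There the transfer is not ``identical,'' and this is exactly the content the paper's citation supplies. In the completed algebra the spectral subspaces of the gauge action do not give an algebraic direct sum: a general element is not a finite sum of homogeneous pieces, and its Fourier series need not converge in norm. So faithfulness of the Fock representation alone does not yield $\cA = C(X)\oplus I$ or $I = \cA^{(1)} \oplus I^2$. To repair this one needs the gauge-invariant conditional expectations onto the spectral subspaces together with Ces\`aro (Fej\'er-kernel) summability, which show that the closed ideal generated by the $\fs_i$ is the kernel of the degree-zero expectation, that any $a\in I$ satisfies $a - \Phi_1(a) \in \overline{I_{X,\sigma}^{\,2}}$ where $\Phi_1$ is the degree-one expectation, and that the algebraic span $\sum_i \fs_i C(X)$ is already closed (the last via the norm estimate $\|(f_1,\dots,f_n)\|\le\|\sum_i\fs_i f_i\|$ obtained by compressing the Fock representation to the vacuum). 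One should also be explicit about whether $I_{X,\sigma}^2$ denotes the algebraic or the closed square, since the quotient module depends on this. None of this is difficult, but it is the genuinely analytic step, and labelling the nontrivial point as ``directness of the grading'' misidentifies where the work lies in the tensor-algebra case.
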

\begin{proof}
	By \cite[Corollary 3.4]{DK11}, every element $a\in \cA(X, \sigma)$ can be written uniquely as
	\[
		a = a_{\emptyset} + \sum_{i=1}^n \fs_i a_i + r(a),
	\]
	where $r(a) \in I_{X, \sigma}^2$ and $a_{\emptyset}, a_i\in C(X)$.
	Furthermore, $a \in I_{X, \sigma}$ if and only if $a_{\emptyset} = 0$.
	From this, we get the desired isomorphism $\cA(X, \sigma)/I_{X, \sigma} \cong C(X)$ and the identification of $I_{X, \sigma}/I_{X, \sigma}^2$ as a free right $C(X)$-module with basis $\hat\fs_i$.
	The formula for the left action follows from the multiplication rule in $\cA(X, \sigma)$.

	The proof for $\cA_0(X, \sigma)$ is analogous.
\end{proof}

A point $x\in X$ is called a fixed point of $(X, \sigma)$ if $\sigma_i(x) = x$ for some $i$.
\begin{lemma}\label{l:auto-filtered}
	Suppose that $(X, \sigma)$ has no fixed points.
	Then, the ideal $I_{X, \sigma}$ equals the intersection of kernels of all characters of $\cA(X, \sigma)$ (resp. $\cA_0(X, \sigma)$).
	In particular, if $(Y, \tau)$ is another multivariable dynamical system, then every isomorphism $\alpha\colon \cA(X, \sigma) \to \cA(Y, \tau)$ or $\alpha\colon \cA_0(X, \sigma) \to \cA_0(Y, \tau)$ is automatically filtered.
\end{lemma}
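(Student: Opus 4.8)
The plan is to characterize $I_{X,\sigma}$ intrinsically, independently of the presentation, as the Jacobson-type radical given by the intersection of the kernels of all (multiplicative linear) characters $\chi\colon \cA(X,\sigma)\to\C$; once this is done, filteredness of any isomorphism is automatic, since an isomorphism $\alpha$ carries characters of the target to characters of the source by precomposition, hence carries the intersection of character kernels isomorphically onto the intersection of character kernels. So the whole content is the identification $I_{X,\sigma}=\bigcap_{\chi}\ker\chi$. First I would note the easy inclusion $\subseteq$: by Lemma~\ref{l:associated-bimodule} the quotient $\cA(X,\sigma)/I_{X,\sigma}\cong C(X)$ is commutative, so every character factors through this quotient; equivalently every $\fs_i$, and hence all of $I_{X,\sigma}$, lies in $\ker\chi$ for every $\chi$. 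The reverse inclusion is where the no-fixed-point hypothesis enters.

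For the inclusion $\bigcap_\chi\ker\chi\subseteq I_{X,\sigma}$, take $a\in\cA(X,\sigma)$ with $a\notin I_{X,\sigma}$ and produce a character not vanishing on $a$. Using the normal form of Lemma~\ref{l:associated-bimodule}, write $a=a_\emptyset+\sum_i\fs_i a_i+r(a)$ with $a_\emptyset\in C(X)$ and $a_\emptyset\neq 0$; pick $x_0\in X$ with $a_\emptyset(x_0)\neq 0$. The goal is a character $\chi$ with $\chi(f)=f(x_0)$ for $f\in C(X)$ and $\chi(a)=a_\emptyset(x_0)$, which will follow if $\chi(\fs_i)=0$ for all $i$ and $\chi$ kills $I_{X,\sigma}^2\supseteq r(a)$. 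Such a $\chi$ is precisely a one-dimensional row-contractive representation: one needs complex scalars $\lambda_i=\chi(\fs_i)$ and a character $\mathrm{ev}$ of $C(X)$ so that the covariance relation $f\cdot\fs_i=\fs_i(f\circ\sigma_i)$ is respected, i.e. $\mathrm{ev}(f)\lambda_i=\lambda_i\,\mathrm{ev}(f\circ\sigma_i)=\lambda_i\,f(\sigma_i(x_0))$ for all $f$. Taking $\mathrm{ev}=\mathrm{ev}_{x_0}$, this forces $\lambda_i=0$ unless $\sigma_i(x_0)=x_0$; but $(X,\sigma)$ has no fixed points, so $\sigma_i(x_0)\neq x_0$ for every $i$, and we may (indeed must) set all $\lambda_i=0$. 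Thus $\chi:=\mathrm{ev}_{x_0}\circ(\text{quotient by }I_{X,\sigma})$ is a well-defined character, it is row-contractive trivially, it extends to $\cA(X,\sigma)$ by the universal property, and $\chi(a)=a_\emptyset(x_0)\neq 0$. Hence $a\notin\bigcap_\chi\ker\chi$, proving the reverse inclusion.

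The argument for $\cA_0(X,\sigma)$ is identical: the same normal form holds by Lemma~\ref{l:associated-bimodule}, and the character $\mathrm{ev}_{x_0}\circ(\text{quotient})$ is an honest algebra homomorphism $\cA_0(X,\sigma)\to\C$ with no contractivity condition to check at all. Finally, given an isomorphism $\alpha\colon\cA(X,\sigma)\to\cA(Y,\tau)$ (or the covariance-algebra version), for every character $\chi$ of $\cA(Y,\tau)$ the composite $\chi\circ\alpha$ is a character of $\cA(X,\sigma)$, and this correspondence is a bijection between the two character sets (via $\alpha^{-1}$); therefore $\alpha\bigl(\bigcap_\chi\ker\chi\bigr)=\bigcap_{\chi'}\ker\chi'$, i.e. $\alpha(I_{X,\sigma})=I_{Y,\tau}$, and in particular $\alpha(I_{X,\sigma})\subseteq I_{Y,\tau}$, so $\alpha$ is filtered. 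I expect the only subtle point to be making sure the one-dimensional assignment really defines a representation of $\cA_0(X,\sigma)$ — that is, that sending $\fs_i\mapsto 0$ and $f\mapsto f(x_0)$ respects the covariance relation — which is exactly where the absence of fixed points is used and is otherwise a short check; everything else is formal.
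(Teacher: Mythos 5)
There is a genuine gap, and it sits exactly where the no-fixed-point hypothesis has to do its work. You justify the inclusion $I_{X,\sigma}\subseteq\bigcap_\chi\ker\chi$ by saying that since $\cA(X,\sigma)/I_{X,\sigma}\cong C(X)$ is commutative, every character factors through this quotient. That implication is false: a character automatically kills only the commutator ideal, and $I_{X,\sigma}$ is in general strictly larger than the commutator ideal, so characters need not vanish on it. Concretely, if $x_0$ were a fixed point with $\sigma_i(x_0)=x_0$, then $f\mapsto f(x_0)$, $\fs_i\mapsto\lambda\neq 0$, $\fs_j\mapsto 0$ for $j\neq i$ defines a character not vanishing on $I_{X,\sigma}$, even though the quotient is still $C(X)$. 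So this is precisely the inclusion that needs fixed-point-freeness, and the correct argument is the one you actually have in hand but attach to the wrong inclusion: given an \emph{arbitrary} character $\chi$, its restriction to $C(X)$ is $\operatorname{ev}_x$ for some $x\in X$; choosing $f$ with $f(x)=1$ and $f(\sigma_i(x))=0$ (possible because $\sigma_i(x)\neq x$), the covariance relation forces $\chi(\fs_i)=f(x)\chi(\fs_i)=\chi(\fs_i)f(\sigma_i(x))=0$. That is the paper's proof of the forward inclusion.

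Conversely, your ``reverse inclusion'' $\bigcap_\chi\ker\chi\subseteq I_{X,\sigma}$ does not use the hypothesis at all: the character $\operatorname{ev}_{x_0}$ composed with the quotient map $\cA(X,\sigma)\to C(X)$ exists for every $x_0$ whether or not there are fixed points (setting all $\lambda_i=0$ always satisfies the covariance relation), and it already separates $a_\emptyset$ from $0$. The final step --- transporting the intersection of character kernels through an isomorphism --- is fine and agrees with the paper. In short, the overall strategy matches the paper's, but the roles of the two inclusions are swapped: as written, the forward inclusion rests on an invalid step, and fixing it amounts to moving your fixed-point argument from the construction of one character to the analysis of all characters.
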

\begin{proof}
	We will prove the statement for $\cA(X, \sigma)$ since the proof for $\cA_0(X, \sigma)$ is the same.
	Denote by $J$ the intersection of kernels of all characters of $\cA(X, \sigma)\to \C$.
	We first show that $I_{X, \sigma} \subset J$.

	Let $\chi\colon \cA(X, \sigma)\to \C$ be an arbitrary character.
	Then, $\chi|_{C(X)}$ is a character on $C(X)$ and thus corresponds to evaluation at a point $x\in X$.
	Since $x$ is not a fixed point, we can find a function $f\in C(X)$ such that $f(x) = 1$ and $f(\sigma_i(x)) = 0$ for all $1\leq i \leq n$.
	Then,
	\[
		\chi(\fs_i) = f(x) \chi(\fs_i) = \chi(f \fs_i) = \chi(\fs_i(f\circ \sigma_i)) = \chi(\fs_i) f(\sigma_i(x)) = 0.
	\]
	Therefore, $\fs_i\in \ker \chi$ and since $\chi$ was arbitrary, we have $\fs_i\in J$ for all $i$.
	We conclude that $I_{X, \sigma} \subset J$.

	On the other hand, $\cA(X, \sigma)/I_{X, \sigma} \cong C(X)$.
	If $f\in J/I_{X, \sigma} \subset C(X)$, then $f(x) = \operatorname{ev}_x(f) = 0$ for all $x\in X$ and thus $f = 0$.
	Therefore, $J/I_{X, \sigma} = 0$ and we have $J = I_{X, \sigma}$.

	Since the intersection of kernels of characters is an isomorphism invariant, we get that every isomorphism $\alpha\colon \cA(X, \sigma) \to \cA(Y, \tau)$ preserves the ideal $I_{X, \sigma}$.
	We conclude that $\alpha$ is filtered.
\end{proof}

The following theorem is an adaptation of an analogous result proved by Dor-On for weighted partial systems~\cite[Proposition 6.6]{DorOn18}.
\begin{theorem}\label{t:uc-is-iso}
	Suppose that $(X, \sigma)$ and $(Y, \tau)$ are two multivariable dynamical systems without fixed points.
	The following are equivalent.
	\begin{enumerate}
		\item $(X, \sigma)$ and $(Y, \tau)$ are unitarily equivalent after a conjugation.\label{t:uc-is-iso:uc}
		\item The tensor algebras $\cA(X, \sigma)$ and $\cA(Y, \tau)$ are algebraically isomorphic.\label{t:uc-is-iso:iso}
		\item The covariance algebras $\cA_0(X, \sigma)$ and $\cA_0(Y, \tau)$ are isomorphic.\label{t:uc-is-iso:cov}
	\end{enumerate}
\end{theorem}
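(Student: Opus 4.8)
The plan is to prove a cycle of implications $\eqref{t:uc-is-iso:uc} \Rightarrow \eqref{t:uc-is-iso:iso} \Rightarrow \eqref{t:uc-is-iso:cov} \Rightarrow \eqref{t:uc-is-iso:uc}$, exploiting the fact that the absence of fixed points forces every isomorphism to be filtered (Lemma~\ref{l:auto-filtered}) and hence to descend to an isomorphism of the associated bimodules $M_\sigma = I_{X,\sigma}/I_{X,\sigma}^2$ described in Lemma~\ref{l:associated-bimodule}. The implication $\eqref{t:uc-is-iso:uc}\Rightarrow\eqref{t:uc-is-iso:iso}$ is essentially already available: a unitary equivalence after conjugation yields a completely isometric isomorphism of tensor algebras by the theorem of Katsoulis and Ramsey~\cite{KR22}, which is in particular an algebraic isomorphism. (Alternatively, one can write down the isomorphism directly: a conjugacy $\varphi$ together with a unitary $u\colon X\to\rmU(n)$ satisfying the compatibility condition induces $\fs_i \mapsto \sum_j u_{ij}\,\fs_j$ on generators, and the covariance relations are preserved precisely because $u_{ij}(x)\neq 0 \Rightarrow \tau_i(x)=\sigma_j(x)$; this same formula simultaneously handles~\eqref{t:uc-is-iso:cov}.) The implication $\eqref{t:uc-is-iso:iso}\Rightarrow\eqref{t:uc-is-iso:cov}$ should follow because $\cA_0$ sits densely inside $\cA$ as the subalgebra of elements with $r(a)=0$, and the filtered structure lets one recover the covariance algebra intrinsically as the subalgebra generated by $C(X)$ and $I/I^2$-lifts; more robustly, one shows both $\cA_0$ and $\cA$ have the same "first-order" data $(C(X), M_\sigma)$ and that an isomorphism of either algebra is determined on this data, then uses that $\cA_0(X,\sigma)$ is the algebraic (non-completed) object built from $(C(X), M_\sigma)$ while $\cA(X,\sigma)$ is its operator-algebraic completion — so an isomorphism of tensor algebras restricts to one of covariance algebras once we know it is filtered and continuous enough, or we reprove it purely algebraically via the bimodule.

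The crux is the implication $\eqref{t:uc-is-iso:cov}\Rightarrow\eqref{t:uc-is-iso:uc}$ (and symmetrically $\eqref{t:uc-is-iso:iso}\Rightarrow\eqref{t:uc-is-iso:uc}$): from an abstract algebra isomorphism we must manufacture a homeomorphism $\varphi$ and a continuous unitary-valued function $u$. Here is the route. Let $\alpha\colon \cA_0(X,\sigma)\to\cA_0(Y,\tau)$ be an isomorphism; by Lemma~\ref{l:auto-filtered} it is filtered, so it induces $\alpha_0\colon C(X)\to C(Y)$ and an $\alpha_0$-semilinear module isomorphism $\alpha_1\colon M_\sigma\to M_\tau$. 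The character-space of $C(X)$ is $X$, so $\alpha_0$ dualizes to a homeomorphism $\varphi\colon Y\to X$; replacing $(Y,\tau)$ by the conjugate system via $\varphi$, we may assume $X=Y$ and $\alpha_0=\id_{C(X)}$. Now $\alpha_1\colon M_\sigma\to M_\tau$ is an isomorphism of $C(X)$-bimodules, where the right module structures are both free of rank $n$ with bases $\hat\fs_i$, $\hat\fs_i'$, and the left actions are $f\cdot\hat\fs_i=\hat\fs_i\cdot(f\circ\sigma_i)$ and $f\cdot\hat\fs_i'=\hat\fs_i'\cdot(f\circ\tau_i)$. Writing $\alpha_1(\hat\fs_j)=\sum_i \hat\fs_i'\,u_{ij}$ with $u_{ij}\in C(X)$, the matrix $u=(u_{ij})$ is invertible over $C(X)$, i.e. $u$ is a continuous $\mathrm{GL}_n(\C)$-valued function; and $C(X)$-bilinearity of $\alpha_1$ forces $u_{ij}(f\circ\sigma_j)=(f\circ\tau_i)u_{ij}$ for all $f$, which is exactly the statement that $u_{ij}(x)\neq 0 \Rightarrow \sigma_j(x)=\tau_i(x)$. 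It remains to upgrade $u$ from $\mathrm{GL}_n$-valued to $\rmU(n)$-valued: this is a standard polar-decomposition / Gram–Schmidt argument. One takes the continuous positive square root $p=(u^*u)^{1/2}$ and sets $\tilde u = u p^{-1}$; this is unitary-valued and continuous, and one checks the zero-pattern is preserved — the point being that $p$ is block-diagonal with respect to the partition $\cP_\sigma(x)$ at each $x$ (since $u$ respects that block structure, as in Lemma~\ref{l:unitary-conj-block}), so $\tilde u_{ij}(x)\neq 0$ still implies $\sigma_j(x)=\tau_i(x)$. Thus $\tilde u$ is the desired unitary equivalence.

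I expect the main obstacle to be the final unitarization step, specifically verifying that passing from $u$ to its polar part $\tilde u$ does not destroy the crucial implication $\tilde u_{ij}(x)\neq 0 \Rightarrow \sigma_j(x)=\tau_i(x)$. The clean way to see this is pointwise: fix $x$, group the indices $\bar n$ according to the value of $\sigma_\bullet(x)$ — equivalently by the blocks of $\cP_\sigma(x)$ — and according to the value of $\tau_\bullet(x)$; the compatibility relation says $u(x)$ is supported on a "bipartite block" structure matching these groupings, so $u(x)$ is block-diagonal after a permutation, hence so are $u(x)^*u(x)$, $p(x)$, $p(x)^{-1}$, and therefore $\tilde u(x)=u(x)p(x)^{-1}$. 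Continuity of $\tilde u$ follows since matrix multiplication, adjoint, positive square root, and inversion are all continuous. A secondary point requiring care is checking that $\alpha_1$ being filtered-isomorphism-induced really does give an \emph{invertible} matrix $u$ over $C(X)$ and not merely a one-sided inverse — but this is immediate from applying the same construction to $\alpha^{-1}$ and composing. Finally, for the reduction to $\eqref{t:uc-is-iso:cov}$ from $\eqref{t:uc-is-iso:iso}$ one should note that Lemma~\ref{l:associated-bimodule} gives the identical bimodule $(C(X), M_\sigma)$ for both $\cA_0$ and $\cA$, so the whole argument of the previous paragraph is insensitive to which algebra we start from, closing the cycle.
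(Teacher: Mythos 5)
Your proposal is correct and follows essentially the same route as the paper: the explicit isomorphism $\alpha_u(\fs_j^\sigma)=\sum_i\fs_i^\tau u_{ij}$ for \ref{t:uc-is-iso:uc}$\Rightarrow$\ref{t:uc-is-iso:iso},\ref{t:uc-is-iso:cov}, and for the converse the automatic filtering (Lemma~\ref{l:auto-filtered}), reduction to $X=Y$ via $\widehat{\alpha_0}$, the bimodule matrix $u\in\mathrm{GL}_n(C(X))$ from Lemma~\ref{l:associated-bimodule}, and unitarization by the polar part — your block-structure justification for why the polar part preserves the zero-pattern is exactly the detail the paper leaves implicit. The only cosmetic issue is that your vaguely sketched direct \ref{t:uc-is-iso:iso}$\Rightarrow$\ref{t:uc-is-iso:cov} step is unnecessary, since the implications you actually establish (both directions between \ref{t:uc-is-iso:uc} and each of \ref{t:uc-is-iso:iso}, \ref{t:uc-is-iso:cov}) already close the equivalence, which is precisely the paper's star-shaped logical structure.
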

\begin{proof}
	Let $\varphi\colon X\to Y$ be a homeomorphism and set
	\[
		\tau'=\varphi^{-1}\circ\tau\circ\varphi
	\]
	so that the systems $(X,\sigma)$ and $(X,\tau')$ are unitarily equivalent.
	Since conjugating by $\varphi$ induces an isomorphism
	\[
		\cA_0(Y,\tau)\cong\cA_0(X,\tau'),
	\]
	it suffices to show
	\[
		\cA_0(X,\sigma)\cong\cA_0(X,\tau').
	\]
	Therefore, without loss of generality, we may assume that $X=Y$ and that the systems $(X, \sigma)$ and $(X, \tau)$ are unitarily equivalent.

	Let $u\colon X\to \rmU(n)$ be a unitary equivalence between $(X, \sigma)$ and $(X, \tau)$.
	Define an algebra homomorphism $\alpha_u\colon \cA_0(X, \sigma)\to \cA_0(X, \tau)$ by $\alpha_u(f) = f$ for all $f\in C(X)$ and
	\begin{equation*}%
		\alpha_u(\fs_j^\sigma) = \sum_{i=1}^n \fs_i^\tau u_{ij},
	\end{equation*}
	where $\fs_i^\sigma$ and $\fs_i^\tau$ are the canonical generators of $\cA_0(X, \sigma)$ and $\cA_0(X, \tau)$, respectively.

	We verify that the homomorphism $\alpha_u$ is well-defined.
	For $f \in C(X)$, we compute:
	\begin{equation}\label{eq:alpha-rels}
		\begin{split}
			\alpha_u(f)\alpha_u(\fs^{\sigma}_j) = \sum_{i=1}^n f\fs_i^\tau u_{ij} &= \sum_{i=1}^n \fs_i^\tau (f \circ \tau_i)\cdot u_{ij}, \\
			\alpha_u(\fs^{\sigma}_j)\alpha_u(f \circ \sigma_j) &= \sum_{i=1}^n \fs_i^\tau (f \circ \sigma_j) \cdot u_{ij}.
		\end{split}
	\end{equation}
	By the definition of unitary equivalence, if $u_{ij}(x) \neq 0$ for some $x \in X$, then $\tau_i(x) = \sigma_j(x)$.
	It follows that $(f \circ \tau_i)\cdot u_{ij} = (f \circ \sigma_j)\cdot u_{ij}$ for all $i,j$, and hence both expressions in~\eqref{eq:alpha-rels} are equal.
	Therefore, $\alpha_u$ preserves the covariance relations and is well-defined.
	Moreover, it is an isomorphism, with inverse given by $\alpha_{u^*}$.

	To show that $\ref{t:uc-is-iso:uc} \Rightarrow \ref{t:uc-is-iso:iso}$, we note that if $\rho\colon \cA_0(X, \tau)\to \cB(\cH)$ is a row-contractive representation, then $\rho\circ \alpha_u\colon \cA_0(X, \sigma)\to \cB(\cH)$ is also row-contractive, thanks to the fact that $u$ has unitary values.
	Therefore, the isomorphism $\alpha_u$ extends to a homomorphism of tensor algebras.
	Since the same is true for the inverse $\alpha_u^{-1} = \alpha_{u^*}$, the extended homomorphism is an isomorphism.

	Finally, we show that $\ref{t:uc-is-iso:cov} \Rightarrow \ref{t:uc-is-iso:uc}$ and $\ref{t:uc-is-iso:iso} \Rightarrow \ref{t:uc-is-iso:uc}$.
	By Lemma~\ref{l:auto-filtered}, every isomorphism $\alpha$, as well as its inverse, between tensor or covariance algebras is automatically filtered, and thus induces an isomorphism $\alpha_0\colon C(X) \to C(Y)$.
	If we set $\varphi = (\widehat{\alpha_0})^{-1}\colon X \to Y$, then we may replace $(Y, \tau)$ with $(X, \varphi^{-1}\tau\varphi)$.
	In this way, we may assume without loss of generality that $X=Y$ and $\alpha_0$ is the identity map on $C(X)$.

	Then, $\alpha_1\colon M_\sigma \to M_{\tau}$ is a $C(X)$-bimodule isomorphism as described in Lemma~\ref{l:associated-bimodule}.
	Since both modules are free of rank $n$ as right $C(X)$-modules, there exists an invertible matrix $v\colon X \to \mathrm{GL}(n)$ with entries in $C(X)$ such that
	\[
		\alpha_1(\hat\fs_j^\sigma) = \sum_{i=1}^n \hat\fs_i^{\tau} v_{i,j}, \quad \text{for all } j.
	\]
	Since $\alpha_1$ is also a left $C(X)$-module homomorphism, we have
	\[
		\sum_{i=1}^n \hat\fs_i^{\tau} (f\circ \tau_i) v_{i,j} = \alpha_1(f\hat\fs_j^\sigma) = \alpha_1(\hat\fs_j^\sigma (f\circ \sigma_j))= \sum_{i=1}^n \hat\fs_i^{\tau} (f\circ \sigma_j) v_{i,j}
	\]
	for all $f\in C(X)$.
	Therefore, if $v_{ij}(x) \neq 0$, then $f(\tau_i(x)) = f(\sigma_j(x))$ for all $f\in C(X)$ and thus $\tau_i(x) = \sigma_j(x)$.
	In other words, $v$ satisfies the condition from Definition~\ref{d:unitary-equivalence}.

	The only issue is that $v$ may fail to be unitary.
	To remedy this, we take its polar part $u = v|v|^{-1} \colon X \to \rmU(n)$.
	Since $u$ retains the same block structure as $v$, it defines a unitary equivalence between $(X, \sigma)$ and $(X, \tau)$.
	We conclude that $(X, \sigma)$ is unitarily equivalent to $(X, \tau)$, and hence it is unitarily equivalent after conjugation to $(Y, \tau)$.
\end{proof}

\section{Construction of piecewise but not unitarily equivalent systems}\label{s:pe-but-not-ue}
In this section, we construct two piecewise conjugate multivariable dynamical systems on a space $X$ containing the $2$-skeleton of $\Delta^{S_4}$.
We then show that any unitary equivalence between these systems will necessarily be an admissible map, and thus the systems are not unitarily equivalent.

We denote elements of $\Delta^{S_n}$ as linear combinations
\[
	x = \sum_{g\in S_n} x_g g
\]
satisfying $\sum_{g\in S_n} x_g = 1$.
We have $x\in \Delta^{S_n}_{(k)}$ if and only if $\left| \{g\in S_n \colon x_g \neq 0\} \right| \leq k + 1$.
From now on, we restrict our attention to $\Delta^{S_4}_{(2)}$.

For $g \in S_4$, we define subsets
\[
	V_g = \{ x \in \Delta^{S_4}_{(2)} \colon x_g > \frac14\},
\]
\[
	D_g = \overline{V_g} = \{ x \in \Delta^{S_4}_{(2)} \colon x_g \geq \frac14 \}.
\]
Furthermore, for an ordered tuple $g_0 < g_1 < \ldots < g_l$ of elements of $S_4$, we define
\[
	D_{g_0, \ldots, g_l} = \bigcap_{i=0}^l D_{g_i} =\{ x \in \Delta^{S_n}_{(2)} \colon  x_{g_i} \geq \frac14 \text{ for all } 0\leq i \leq l\}.
\]
Observe that $D_{g_0, \ldots, g_l}$ is empty if $l \geq 3$, i.e., if the simplex $\Delta^{g_0, \ldots, g_l}$ has dimension more than 2 and thus is not a subsimplex of $\Delta^{S_4}_{(2)}$.
Figure~\ref{fig:d-domains} depicts intersections of these sets with a single face.
Note that $D_{g_0}$ and $D_{g_0, g_1}$ are not fully visible in the figure, while $D_{g_0, g_1, g_2}$ fits into a single face.
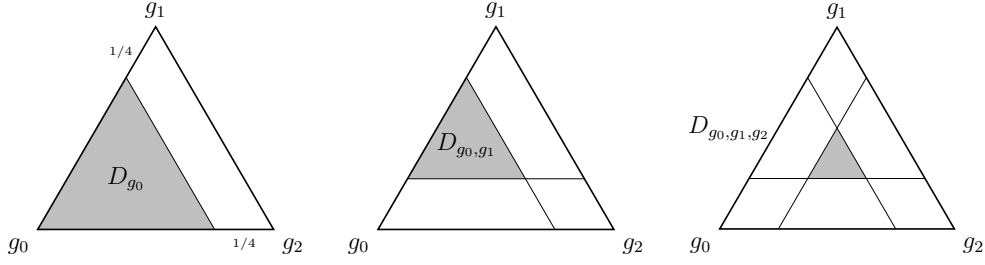
\begin{figure}[htbp]
	\centering
	\begin{minipage}[c]{0.32\linewidth}
		\centering
		\resizebox{\linewidth}{!}{%
			\begin{tikzpicture}[scale=4]
				\coordinate (A) at (0,0);
				\coordinate (C) at (1,0);
				\coordinate (B) at ($(A)!0.5!(C) + (0,{sqrt(3)/2})$);

				\coordinate (M) at ($(B)!0.25!(A)$);
				\coordinate (N) at ($(C)!0.25!(A)$);

				\fill[gray!50] (A) -- (M) -- (N) -- cycle;

				\draw[thick] (A) -- (B) -- (C) -- cycle;

				\draw (M) -- (N);

				\node[below left] at (A) {$g_0$};
				\node[above] at (B) {$g_1$};
				\node[below right] at (C) {$g_2$};

				\node[left] at ($(M)!0.5!(B)$) {\tiny $1/4$};
				\node[below] at ($(N)!0.5!(C)$) {\tiny $1/4$};

				\node at (barycentric cs:A=1,M=1,N=1) {$D_{g_0}$};
			\end{tikzpicture}
		}
	\end{minipage}
	\hfill
	\begin{minipage}[c]{0.32\linewidth}
		\centering
		\resizebox{\linewidth}{!}{%
			\begin{tikzpicture}[scale=4]
				\coordinate (A) at (0,0);
				\coordinate (C) at (1,0);
				\coordinate (B) at ($(A)!0.5!(C) + (0,{sqrt(3)/2})$);

				\coordinate (M) at ($(B)!0.25!(A)$);
				\coordinate (N) at ($(C)!0.25!(A)$);

				\coordinate (M1) at ($(A)!0.25!(B)$);
				\coordinate (N1) at ($(C)!0.25!(B)$);

				\coordinate (M2) at ($(A)!0.25!(C)$);
				\coordinate (N2) at ($(B)!0.25!(C)$);

				\path[name path=MN] (M) -- (N);
				\path[name path=M1N1] (M1) -- (N1);
				\path[name intersections={of=MN and M1N1, by=I01}];

				\fill[gray!50] (M1) -- (M) -- (I01) -- cycle;

				\draw[thick] (A) -- (B) -- (C) -- cycle;

				\draw (M) -- (N);
				\draw (M1) -- (N1);

				\node[below left] at (A) {$g_0$};
				\node[above] at (B) {$g_1$};
				\node[below right] at (C) {$g_2$};

				\node at (barycentric cs:M=1,M1=1,I01=1) {$D_{g_0, g_1}$};
			\end{tikzpicture}
		}
	\end{minipage}
	\hfill
	\begin{minipage}[c]{0.32\linewidth}
		\centering
		\resizebox{\linewidth}{!}{%
			\begin{tikzpicture}[scale=4]
				\coordinate (A) at (0,0);
				\coordinate (C) at (1,0);
				\coordinate (B) at ($(A)!0.5!(C) + (0,{sqrt(3)/2})$);

				\coordinate (M) at ($(B)!0.25!(A)$);
				\coordinate (N) at ($(C)!0.25!(A)$);

				\coordinate (M1) at ($(A)!0.25!(B)$);
				\coordinate (N1) at ($(C)!0.25!(B)$);

				\coordinate (M2) at ($(A)!0.25!(C)$);
				\coordinate (N2) at ($(B)!0.25!(C)$);

				\path[name path=MN] (M) -- (N);
				\path[name path=M1N1] (M1) -- (N1);
				\path[name path=M2N2] (M2) -- (N2);
				\path[name intersections={of=MN and M1N1, by=I01}];
				\path[name intersections={of=MN and M2N2, by=I02}];
				\path[name intersections={of=M1N1 and M2N2, by=I12}];

				\fill[gray!50] (I01) -- (I02) -- (I12) -- cycle;

				\draw[thick] (A) -- (B) -- (C) -- cycle;

				\draw (M) -- (N);
				\draw (M1) -- (N1);
				\draw (M2) -- (N2);

				\node[below left] at (A) {$g_0$};
				\node[above] at (B) {$g_1$};
				\node[below right] at (C) {$g_2$};

				\node[left] at ($(A)!0.5!(B)$) {$D_{g_0, g_1, g_2}$};

			\end{tikzpicture}
		}
	\end{minipage}

	\caption{$D$-domains}
	\label{fig:d-domains}
\end{figure}

\begin{lemma}\label{l:prop-of-covers}
	Let $\Delta^{h_0, \ldots, h_m}$ and $\Delta^{g_0, \ldots, g_l}$ be two subsimplices of $\Delta^{S_4}_{(2)}$.
	\begin{enumerate}
		\item $\{V_g\}_{g\in S_4}$ is an open cover of $\Delta^{S_4}_{(2)}$. \label{l:prop-of-covers:V-cover}
		\item $\Delta^{g_0, \ldots, g_l} \subset \bigcup_{i = 0}^{l} V_{g_i}$. \label{l:prop-of-covers:Delta-V}
		\item If $D_{h_0, \ldots, h_m} \cap \Delta^{g_0, \ldots, g_l} \neq \emptyset$, then $\Delta^{h_0, \ldots, h_m}$ is a subsimplex of $\Delta^{g_0, \ldots, g_l}$. \label{l:prop-of-covers:intersection}
	\end{enumerate}
\end{lemma}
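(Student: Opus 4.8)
The plan is to verify the three claims in order, each being a direct consequence of the definitions of $V_g$, $D_g$, and the coordinate characterization of $\Delta^{S_4}_{(k)}$.

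\textbf{Claim \ref{l:prop-of-covers:V-cover}.} Each $V_g = \{x : x_g > \tfrac14\}$ is open since $x\mapsto x_g$ is continuous. To see it is a cover, take any $x = \sum_{g\in S_4} x_g g \in \Delta^{S_4}_{(2)}$. Since $x$ lies in the $2$-skeleton, at most $3$ of the coordinates $x_g$ are nonzero, and they sum to $1$; hence the largest one is at least $\tfrac13 > \tfrac14$. So $x\in V_g$ for the $g$ achieving the maximum. (The key point is that the $2$-skeleton restriction forces the support to have size $\le 3$, which is what makes the threshold $\tfrac14$ work.)

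\textbf{Claim \ref{l:prop-of-covers:Delta-V}.} A point $x\in\Delta^{g_0,\ldots,g_l}$ has $x_h = 0$ for $h\notin\{g_0,\ldots,g_l\}$ and $\sum_{i=0}^l x_{g_i} = 1$. Since $l\le 2$ (as $\Delta^{g_0,\ldots,g_l}\subset\Delta^{S_4}_{(2)}$), at most $3$ terms sum to $1$, so $\max_i x_{g_i}\ge\tfrac13 > \tfrac14$, giving $x\in V_{g_i}$ for some $i$. This is really the same computation as in the first claim, restricted to a fixed simplex.

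\textbf{Claim \ref{l:prop-of-covers:intersection}.} Suppose $x\in D_{h_0,\ldots,h_m}\cap\Delta^{g_0,\ldots,g_l}$. Then $x_{h_j}\ge\tfrac14$ for each $j\in\{0,\ldots,m\}$, so in particular $x_{h_j}\ne 0$, which forces $h_j\in\{g_0,\ldots,g_l\}$ since $x$ is supported on that set. Thus $\{h_0,\ldots,h_m\}\subseteq\{g_0,\ldots,g_l\}$, and since both tuples are listed in increasing order, $\Delta^{h_0,\ldots,h_m}$ is a face of $\Delta^{g_0,\ldots,g_l}$. I do not anticipate a real obstacle here; the only thing to be slightly careful about is that $m\le 2$ automatically (otherwise $D_{h_0,\ldots,h_m} = \emptyset$ and the hypothesis fails), so all the objects in sight are genuine subsimplices of $\Delta^{S_4}_{(2)}$ and the statement is not vacuous in a misleading way.

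Overall this lemma is a bookkeeping step collecting the combinatorial facts about the covers $\{V_g\}$ and the closed sets $D_{g_0,\ldots,g_l}$ that will be used to define and glue the dynamical systems in the subsequent construction; there is no single hard step, just three short arguments from the definitions.
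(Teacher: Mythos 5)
Your proof is correct and follows essentially the same argument as the paper's: the support-size bound from the $2$-skeleton forces a coordinate of at least $\tfrac13>\tfrac14$ for parts \ref{l:prop-of-covers:V-cover} and \ref{l:prop-of-covers:Delta-V}, and the support containment argument for part \ref{l:prop-of-covers:intersection}. The added remarks on openness of $V_g$ and on $m\le 2$ are harmless refinements of the same reasoning.
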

\begin{proof}
	\begin{itemize}
		\item[\ref{l:prop-of-covers:V-cover}] For any $x \in \Delta^{S_4}_{(2)}$, we have $\sum_{g \in S_4} x_g = 1$. Since $x \in \Delta^{S_4}_{(2)}$, at most three of the coefficients $x_g$ are nonzero. Thus, there exists $g \in S_4$ such that $x_g \geq \frac13 > \frac14$. This implies $x \in V_g$, making $\{V_g\}_{g \in S_4}$ a cover.

		\item[\ref{l:prop-of-covers:Delta-V}]
			Consider any $x \in \Delta^{g_0, \ldots, g_l}$. By definition, $x$ is supported only on $\{g_0, \ldots, g_l\}$, and $\sum_{i=0}^l x_{g_i} = 1$. As in the proof of \ref{l:prop-of-covers:V-cover}, there exists some $i$ such that $x_{g_i} > \frac14$. Hence, $x \in V_{g_i}$, showing the inclusion.

		\item[\ref{l:prop-of-covers:intersection}]
			Suppose $x \in D_{h_0, \ldots, h_m} \cap \Delta^{g_0, \ldots, g_l}$. Then $x \in \Delta^{g_0, \ldots, g_l}$, so $x$ is supported only on $\{g_0, \ldots, g_l\}$. Moreover, $x \in D_{h_0, \ldots, h_m}$, implying $x_{h_i} \geq \frac14$ for all $i = 0, \ldots, m$. This ensures $\{h_0, \ldots, h_m\} \subset \{g_0, \ldots, g_l\}$, making $\Delta^{h_0, \ldots, h_m}$ a subsimplex of $\Delta^{g_0, \ldots, g_l}$.
	\end{itemize}
\end{proof}

\begin{lemma}
	Consider a relation on $\Delta^{S_4}_{(2)}\times \{1, 2, 3, 4\}$ given by $(x, i) \sim (y, j)$ if and only if
	\begin{itemize}
		\item $x = y$,
		\item there exists  $D_{g_0, \ldots, g_l}\ni x$ such that $i \sim_{\cP(\Delta^{g_0, \ldots, g_l})} j$.
	\end{itemize}
	It is a closed equivalence relation. %
	In particular, the quotient space  $Z \coloneqq  \Delta^{S_4}_{(2)}\times \{1, 2, 3, 4\} / \sim$ is compact and metrizable.
\end{lemma}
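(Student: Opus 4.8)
The plan is to verify reflexivity, symmetry and transitivity of $\sim$ straight from the definition, then deduce that its graph is closed from the fact that there are only finitely many $D$-domains, and finally read off compactness and metrizability of $Z$ from standard facts about quotients. Symmetry is immediate, since both conditions defining $(x,i)\sim(y,j)$ are symmetric in the two pairs. For reflexivity, given $x\in\Delta^{S_4}_{(2)}$, the open cover $\{V_g\}_{g\in S_4}$ from Lemma~\ref{l:prop-of-covers}\ref{l:prop-of-covers:V-cover} provides some $g$ with $x\in V_g\subseteq D_g$, and since $i\sim_{\cP(\Delta^g)} i$ holds trivially, $(x,i)\sim(x,i)$ is witnessed by $D_g$.

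\emph{Transitivity} is the substantive step. Suppose $(x,i)\sim(x,j)$ and $(x,j)\sim(x,k)$, witnessed by $D_{g_0,\dots,g_l}\ni x$ with $i\sim_{\cP(\Delta^{g_0,\dots,g_l})} j$ and by $D_{h_0,\dots,h_m}\ni x$ with $j\sim_{\cP(\Delta^{h_0,\dots,h_m})} k$. The idea is to absorb both witnessing domains into a single one. Put $F=\{\,g\in S_4 : x_g\geq\tfrac14\,\}$. Since $x\in\Delta^{S_4}_{(2)}$ has at most three nonzero coordinates, $|F|\leq 3$, so writing $F=\{e_0<\dots<e_p\}$ with $p\leq 2$ the tuple $e_0,\dots,e_p$ spans a subsimplex $\Delta^{e_0,\dots,e_p}$ of $\Delta^{S_4}_{(2)}$, and $x\in D_{e_0,\dots,e_p}$ by the very definition of $F$. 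Moreover $x\in D_{g_0,\dots,g_l}$ forces $\{g_0,\dots,g_l\}\subseteq F$, and likewise $\{h_0,\dots,h_m\}\subseteq F$, so $\Delta^{g_0,\dots,g_l}$ and $\Delta^{h_0,\dots,h_m}$ are subsimplices of $\Delta^{e_0,\dots,e_p}$. By Lemma~\ref{l:subsimplex-subpartition} we then have $\cP(\Delta^{g_0,\dots,g_l})\preceq\cP(\Delta^{e_0,\dots,e_p})$ and $\cP(\Delta^{h_0,\dots,h_m})\preceq\cP(\Delta^{e_0,\dots,e_p})$, hence $i\sim_{\cP(\Delta^{e_0,\dots,e_p})} j$ and $j\sim_{\cP(\Delta^{e_0,\dots,e_p})} k$. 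Since $\sim_{\cP(\Delta^{e_0,\dots,e_p})}$ is an equivalence relation, $i\sim_{\cP(\Delta^{e_0,\dots,e_p})} k$, and thus $(x,i)\sim(x,k)$ via $D_{e_0,\dots,e_p}$.

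\emph{Closedness.} Because $\{1,2,3,4\}$ is discrete, it suffices to show that for each fixed pair $i,j$ the set $E_{ij}=\{\,x\in\Delta^{S_4}_{(2)} : \exists\, D_{g_0,\dots,g_l}\ni x \text{ with } i\sim_{\cP(\Delta^{g_0,\dots,g_l})} j\,\}$ is closed: the graph of $\sim$ is then the union, over the finitely many pairs $(i,j)$, of the sets $\{(x,x):x\in E_{ij}\}\times\{(i,j)\}$, each closed in $(\Delta^{S_4}_{(2)}\times\{1,2,3,4\})^{2}$ once $E_{ij}$ is (using that the diagonal of $\Delta^{S_4}_{(2)}$ is closed). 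But $\Delta^{S_4}_{(2)}$ has only finitely many subsimplices, each domain $D_{g_0,\dots,g_l}$ is closed (it is cut out by the non-strict inequalities $x_{g_i}\geq\tfrac14$), and $E_{ij}$ is precisely the union of those $D_{g_0,\dots,g_l}$ for which $i\sim_{\cP(\Delta^{g_0,\dots,g_l})} j$, i.e.\ a finite union of closed sets, hence closed.

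\emph{Compactness, metrizability, and the main obstacle.} The space $\Delta^{S_4}_{(2)}\times\{1,2,3,4\}$ is compact and metrizable, being a closed subset of the simplex $\Delta^{S_4}$ times a finite discrete set, so $Z$ is compact as its continuous image; and since $\sim$ is a closed equivalence relation on a compact Hausdorff space, $Z$ is Hausdorff. Metrizability I would obtain from the standard fact that the quotient of a compact metric space by a closed equivalence relation is metrizable: concretely, the pullback $q^{*}$ along the quotient map $q$ embeds $C(Z)$ isometrically as a (closed, hence separable) subalgebra of the separable Banach space $C(\Delta^{S_4}_{(2)}\times\{1,2,3,4\})$, and a compact Hausdorff space whose algebra of continuous functions is separable is metrizable. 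The only place where more than bookkeeping is needed is transitivity; the key point is that the two witnessing $D$-domains can always be merged into the single domain $D_{e_0,\dots,e_p}$ attached to $F=\{g:x_g\geq\tfrac14\}$, which remains inside the $2$-skeleton precisely because at most three coordinates of a point of $\Delta^{S_4}_{(2)}$ can be $\geq\tfrac14$, after which Lemma~\ref{l:subsimplex-subpartition} transports both partition-relations into a common partition.
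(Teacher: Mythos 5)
Your proof is correct and follows essentially the same route as the paper: the substantive step is merging the two witnessing $D$-domains into a single one whose index set stays within the $2$-skeleton (you use the full set $F=\{g:x_g\geq\frac14\}$ where the paper uses the union of the two witness sets, which is contained in your $F$ — an immaterial variation), and then transporting both relations into the common partition via Lemma~\ref{l:subsimplex-subpartition}. Your closedness and metrizability arguments just spell out the details the paper delegates to finiteness of the $D$-domains and a citation to Engelking.
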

\begin{proof}
	It is immediate that the relation is reflexive and symmetric.
	We are left to show that it is transitive.

	Let $(x, i) \sim (y, j)$ and $(y, j) \sim (z, k)$.
	Since $\sim$ implies equality in the first coordinate, we conclude that $x = y = z$.
	We will show that $(x, i) \sim (x, k)$.

	From $(x, i) \sim (x, j)$, there exists $D_{g_0, \ldots, g_l} \ni x$ such that $i \sim_{\cP(\Delta^{g_0, \ldots, g_l})} j$.
	Similarly, from $(x, j) \sim (x, k)$, there exists $D_{h_0, \ldots, h_m} \ni x$ such that $j \sim_{\cP(\Delta^{h_0, \ldots, h_m})} k$.
	Let $f_0, \ldots, f_s$ be the union of the sets $\{g_0, \ldots, g_l\}$ and $\{h_0, \ldots, h_m\}$ in ascending order (so that $\Delta^{f_0, \ldots, f_s}$ is a correct notation for a simplex).
	Therefore, we have $D_{f_0, \ldots, f_s} = D_{g_0, \ldots, g_l} \cap D_{h_0, \ldots, h_m} \ni x$.
	This, in particular, means that $x$ has at least $s+1$ non-zero coordinates and since we are in $\Delta^{S_4}_{(2)}$, we have $s \leq 2$.
	Consequently, $\Delta^{f_0, \ldots, f_s}$ is a subsimplex of $\Delta^{S_4}_{(2)}$.

	Since $\Delta^{g_0, \ldots, g_l}$ and $\Delta^{h_0, \ldots, h_m}$ are subsimplices of $\Delta^{f_0, \ldots, f_s}$, we have $\cP(\Delta^{g_0, \ldots, g_l}),\allowbreak \cP(\Delta^{h_0, \ldots, h_m})\preceq \cP(\Delta^{f_0, \ldots, f_s})$ by Lemma~\ref{l:subsimplex-subpartition}.
	Therefore, we have $i \sim_{\cP(\Delta^{f_0, \ldots, f_s})} j$ and $j \sim_{\cP(\Delta^{f_0, \ldots, f_s})} k$ and thus $i \sim_{\cP(\Delta^{f_0, \ldots, f_s})} k$.
	It follows that $(x, i) \sim (x, k)$ and the relation is transitive and thus an equivalence relation.

	Since each $D_{g_0, \ldots, g_l}$ is closed and there are only a finite number of them, the equivalence relation is closed. %
	We conclude that the quotient $Z$ is a metrizable compact space (see \cite[Theorems 3.2.11 and 4.2.13]{Engelking}).
\end{proof}

We denote by $[x, i]$ the equivalence class of $(x, i)$ in $Z$.

We define functions $\tilde{\sigma}_i \colon \Delta^{S_4}_{(2)} \to Z$ for $i = 1, 2, 3, 4$ by $\tilde{\sigma}_i(x) = [x, i]$ for all $x\in \Delta^{S_4}_{(2)}$.
Furthermore, we define functions $\tilde{\tau}_i|_{V_g}\colon V_g\to Z$ for $i = 1,2,3,4$ and $g\in S_4$ by
\begin{equation}\label{e:tau}
	\tilde{\tau}_i|_{V_g}(x) = \tilde{\sigma}_{g^{-1}(i)}|_{V_g}(x) = [x, g^{-1}(i)].
\end{equation}

\begin{lemma}
	The functions $\tilde{\tau}_i|_{V_g}$ defined in~\eqref{e:tau} on the elements of the open cover $\{V_g\}_{g\in S_4}$ of $\Delta^{S_4}_{(2)}$ can be glued to a continuous function $\tilde{\tau}_i\colon \Delta^{S_4}_{(2)} \to Z$.
\end{lemma}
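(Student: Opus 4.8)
The plan is to show that the locally defined maps $\tilde\tau_i|_{V_g}$ agree on overlaps $V_g\cap V_h$, so that they glue to a single well-defined function $\tilde\tau_i$, and then to check that this function is continuous. Since each $\tilde\tau_i|_{V_g}=\tilde\sigma_{g^{-1}(i)}|_{V_g}$ is continuous (being a restriction of the quotient map composed with an inclusion of a slice), and $\{V_g\}$ is an open cover by Lemma~\ref{l:prop-of-covers}\ref{l:prop-of-covers:V-cover}, the gluing lemma for continuous functions will then immediately give continuity of $\tilde\tau_i$.

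First I would reduce the compatibility check to a statement about partitions. Fix $g,h\in S_4$ and a point $x\in V_g\cap V_h$. We must show $[x,g^{-1}(i)]=[x,h^{-1}(i)]$ in $Z$, i.e. that $(x,g^{-1}(i))\sim(x,h^{-1}(i))$. By the definition of $\sim$, it suffices to produce a set $D_{f_0,\dots,f_s}$ containing $x$ such that $g^{-1}(i)\sim_{\cP(\Delta^{f_0,\dots,f_s})}h^{-1}(i)$. The natural candidate is the minimal face-set determined by $x$: since $x\in V_g\cap V_h$ we have $x_g>\tfrac14$ and $x_h>\tfrac14$, so writing $\{f_0,\dots,f_s\}$ for the support of $x$ listed in ascending order (which contains both $g$ and $h$) we get $x\in D_{f_0,\dots,f_s}$, and $s\le 2$ because $x\in\Delta^{S_4}_{(2)}$, so $\Delta^{f_0,\dots,f_s}$ is a genuine subsimplex. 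It remains to verify $g^{-1}(i)\sim_{\cP(\Delta^{f_0,\dots,f_s})}h^{-1}(i)$ for every $i$, equivalently that $\sigma_{g^{-1}(i)}$ and $\sigma_{h^{-1}(i)}$ lie in the same block; renaming $j=g^{-1}(i)$ and $k=h^{-1}(i)$, this says $k=(h^{-1}g)(j)$ and we need $j\sim_{\cP(\Delta^{f_0,\dots,f_s})}(h^{-1}g)(j)$ for all $j$.

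So the main obstacle — and the heart of the argument — is to check that for any two vertices $g,h$ in the support-set $\{f_0,\dots,f_s\}$ of $x$, the permutation $h^{-1}g$ preserves the blocks of $\cP(\Delta^{f_0,\dots,f_s})$, i.e. $h^{-1}g\in\rmU(\cP(\Delta^{f_0,\dots,f_s}))$ in the sense of permutation matrices. This follows because $\cP(\Delta^{f_0,\dots,f_s})=\cP(f_0^{-1}f_1,\dots,f_0^{-1}f_s)$ is by definition the orbit partition of the group generated by those elements, and both $f_0^{-1}g$ and $f_0^{-1}h$ lie in that group (as $g,h\in\{f_0,\dots,f_s\}$), hence so does $h^{-1}g=(f_0^{-1}h)^{-1}(f_0^{-1}g)$; an element of a permutation group always maps each of its orbits to itself. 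Therefore $h^{-1}g$ permutes the elements within each block, giving $j\sim_{\cP(\Delta^{f_0,\dots,f_s})}(h^{-1}g)(j)$ for all $j$, which is exactly what we needed. This proves $\tilde\tau_i|_{V_g}(x)=\tilde\tau_i|_{V_h}(x)$ on all overlaps, so the maps glue to a well-defined $\tilde\tau_i\colon\Delta^{S_4}_{(2)}\to Z$, and continuity follows from the pasting lemma on the open cover $\{V_g\}_{g\in S_4}$.
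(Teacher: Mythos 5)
Your overall strategy is the right one and, at its core, it is the same as the paper's: reduce to checking $[x,g^{-1}(i)]=[x,h^{-1}(i)]$ on $V_g\cap V_h$, exhibit a set $D_{\dots}$ containing $x$, and observe that $h^{-1}g$ lies in the group defining the corresponding orbit partition, so $g^{-1}(i)$ and $h^{-1}(i)=(h^{-1}g)(g^{-1}(i))$ fall in the same block. That orbit argument is correct.

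There is, however, one step that is false as written: you claim that if $\{f_0,\dots,f_s\}$ is the \emph{support} of $x$, then $x\in D_{f_0,\dots,f_s}$. Membership in $D_{f_0,\dots,f_s}$ requires $x_{f_j}\geq\tfrac14$ for \emph{every} $j$, not merely $x_{f_j}>0$; for instance $x=0.4\,g+0.4\,h+0.2\,f$ lies in $V_g\cap V_h$ and has $f$ in its support but $x\notin D_f$, hence $x\notin D_{g,h,f}$. The repair is immediate and in fact simplifies your argument: take the index set to be just $\{g,h\}$ (in ascending order). Then $x\in V_g\cap V_h$ gives $x_g,x_h>\tfrac14$, so $x\in D_{g,h}$, and $\cP(\Delta^{g,h})=\cP(g^{-1}h)$ is the orbit partition of the cyclic group generated by $g^{-1}h$, which certainly contains $h^{-1}g$; your computation $h^{-1}(i)=(h^{-1}g)(g^{-1}(i))$ then yields $g^{-1}(i)\sim_{\cP(\Delta^{g,h})}h^{-1}(i)$ and hence the required identification in $Z$. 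With that substitution your proof coincides with the paper's; there is no need to invoke the full support of $x$ or the bound $s\le 2$.
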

\begin{proof}
	We only need to check that the functions agree on the intersections of the open cover.
	Let $x\in V_g\cap V_h \subset D_{g, h}$.
	Then, $\tau_i|_{V_g}(x) = [x, g^{-1}(i)]$ and $\tau_i|_{V_h}(x) =  [x, h^{-1}(i)]$.
	We have $g^{-1}(i) = g^{-1} h h^{-1}(i)$, so $g^{-1}(i)$ and $h^{-1}(i)$ are in the same orbit of the group generated by $g^{-1}h$.
	This means that $g^{-1}(i) \sim_{\cP(\Delta^{g, h})} h^{-1}(i)$ and thus $[x, g^{-1}(i)] = [x, h^{-1}(i)]$.
	We conclude that the functions $\tau_i|_{V_g}$ agree on the intersections $V_g\cap V_h$ and thus can be glued to a continuous function $\tau_i\colon \Delta^{S_4}_{(2)} \to Z$.
\end{proof}

Consider the compact space $X = \Delta^{S_4}_{(2)} \sqcup Z$ and define two $4$-variable dynamical systems $(X, \sigma),~(X, \tau)$ by
\[
	\sigma_i|_{\Delta^{S_4}_{(2)}}  = \tilde\sigma_i,\quad\tau_i|_{\Delta^{S_4}_{(2)}}    = \tilde\tau_i,\quad \sigma_i|_Z = \tau_i|_Z \equiv e\in \Delta^{S_4}_{(2)} \subset X,
\]
where $e$ is the identity element of $S_4$.
We could have chosen any other point of $\Delta^{S_4}_{(2)}$ instead, the only thing that matters is that all the maps have no fixed points.

Recall that given a point $x\in X$ in a multivariable dynamical system, we denote by $\cP_\sigma(x)$ the partition of $\{1, 2, 3, 4\}$ such that $i\sim_{\cP_\sigma(x)} j$ if and only if $\sigma_i(x) = \sigma_j(x)$.
\begin{lemma}\label{l:point-parititon-simplex}
	Suppose that $x\in \Delta^{g_0, \ldots, g_l} \subset \Delta^{S_4}_{(2)} \subset X$.
	Then, $\cP_\sigma(x) \preceq \cP(\Delta^{g_0, \ldots, g_l})$.
\end{lemma}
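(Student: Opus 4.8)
The plan is to unwind the definition of $\cP_\sigma(x)$ and then chain together the two combinatorial facts about subsimplices established earlier. Recall that by construction $\sigma_i|_{\Delta^{S_4}_{(2)}} = \tilde\sigma_i$, so for $x\in\Delta^{S_4}_{(2)}$ we have $\sigma_i(x) = [x,i]\in Z$, and hence $i \sim_{\cP_\sigma(x)} j$ if and only if $[x,i] = [x,j]$ in the quotient $Z$. Since $\cP_\sigma(x)\preceq\cP(\Delta^{g_0,\ldots,g_l})$ means exactly that each block of $\cP_\sigma(x)$ is contained in a block of $\cP(\Delta^{g_0,\ldots,g_l})$, it suffices to prove the implication $[x,i]=[x,j] \;\Longrightarrow\; i\sim_{\cP(\Delta^{g_0,\ldots,g_l})} j$. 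Here I would use that the relation defining $Z$ was shown to be \emph{already} an equivalence relation, so that $[x,i]=[x,j]$ directly yields a single-step witness rather than only a chain.

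So I would fix $i,j$ with $[x,i]=[x,j]$, i.e.\ $(x,i)\sim(x,j)$. By the definition of the equivalence relation defining $Z$, there is a domain $D_{h_0,\ldots,h_m}\ni x$ with $i\sim_{\cP(\Delta^{h_0,\ldots,h_m})} j$. Since $D_{h_0,\ldots,h_m}$ is non-empty and we work inside the $2$-skeleton, $\Delta^{h_0,\ldots,h_m}$ is a genuine subsimplex of $\Delta^{S_4}_{(2)}$ (the point $x$ has at least $m+1$ nonzero coordinates, so $m\le 2$). Now $x\in D_{h_0,\ldots,h_m}\cap\Delta^{g_0,\ldots,g_l}\neq\emptyset$, so the third part of Lemma~\ref{l:prop-of-covers} shows that $\Delta^{h_0,\ldots,h_m}$ is a subsimplex of $\Delta^{g_0,\ldots,g_l}$. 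Then Lemma~\ref{l:subsimplex-subpartition} gives $\cP(\Delta^{h_0,\ldots,h_m})\preceq\cP(\Delta^{g_0,\ldots,g_l})$, and therefore the relation $i\sim_{\cP(\Delta^{h_0,\ldots,h_m})} j$ survives in the coarser partition: $i\sim_{\cP(\Delta^{g_0,\ldots,g_l})} j$. This is precisely the desired implication, so the proof is complete.

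I do not anticipate any genuine difficulty: the statement is essentially a restatement of the construction of $Z$ combined with Lemma~\ref{l:subsimplex-subpartition} and the third part of Lemma~\ref{l:prop-of-covers}. The only point that deserves a moment of care is verifying that the domain $D_{h_0,\ldots,h_m}$ witnessing $[x,i]=[x,j]$ really corresponds to a subsimplex of $\Delta^{S_4}_{(2)}$, so that the intersection lemma applies; this is immediate from the non-emptiness of $D_{h_0,\ldots,h_m}$ and the bound on the number of nonzero coordinates available inside the $2$-skeleton.
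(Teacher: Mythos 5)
Your proof is correct and follows essentially the same route as the paper: identify a witness $D_{h_0,\ldots,h_m}\ni x$ for $[x,i]=[x,j]$, apply the intersection part of Lemma~\ref{l:prop-of-covers} to see $\Delta^{h_0,\ldots,h_m}\subset\Delta^{g_0,\ldots,g_l}$, and conclude via Lemma~\ref{l:subsimplex-subpartition}. Your extra remark that the relation on $Z$ being a genuine equivalence relation yields a single-step witness is a nice point of care, though the paper takes this for granted.
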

\begin{proof}
	Suppose that $i\sim_{\cP_\sigma(x)} j$, that is, $[x, i] = \sigma_i(x) = \sigma_j(x) = [x, j]$.
	It is enough to show that $i\sim_{\cP(\Delta^{g_0, \ldots, g_l})} j$.

	By definition, $[x, i] = [x, j]$ implies that there is $\Delta^{h_0, \ldots, h_m}$ such that $x\in D_{h_0, \ldots, h_m}$ and $i\sim_{\cP(\Delta^{h_0, \ldots, h_m})} j$.
	Therefore, $x\in D_{h_0, \ldots, h_m} \cap \Delta^{g_0, \ldots, g_l}$ and thus the intersection is non-empty.
	By Lemma~\ref{l:prop-of-covers}.\ref{l:prop-of-covers:intersection}, $\Delta^{h_0, \ldots, h_m}$ is a subsimplex of $\Delta^{g_0, \ldots, g_l}$.
	By Lemma~\ref{l:subsimplex-subpartition}, we have $\cP(\Delta^{h_0, \ldots, h_m}) \preceq \cP(\Delta^{g_0, \ldots, g_l})$ and, hence, $i\sim_{\cP(\Delta^{g_0, \ldots, g_l})} j$.
\end{proof}

\begin{theorem}\label{t:pc-not-ue}
	The multivariable dynamical systems $(X, \sigma)$ and $(X, \tau)$ defined above are piecewise equivalent but not unitarily equivalent.
\end{theorem}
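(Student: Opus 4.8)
The plan is to treat the two assertions separately. Piecewise equivalence is essentially a bookkeeping exercise with the cover $\{V_g\}$, while the failure of unitary equivalence is made to follow from Theorem~\ref{t:no-admissible} by restricting a hypothetical unitary equivalence to $\Delta^{S_4}_{(2)}$ and checking it is weakly admissible in the sense of Definition~\ref{d:admissible}.

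First I would establish piecewise equivalence. Since $X = \Delta^{S_4}_{(2)} \sqcup Z$ is a disjoint union of compact spaces, the subset $Z$ is clopen in $X$, so for $g \neq e$ the set $W_g = V_g$ and the set $W_e = V_e \sqcup Z$ are open in $X$; by Lemma~\ref{l:prop-of-covers}.\ref{l:prop-of-covers:V-cover} these sets cover $X$. On $V_g$ the identity $\tau_i|_{V_g} = \sigma_{g^{-1}(i)}|_{V_g}$ is exactly the defining relation~\eqref{e:tau}, and on $Z$ every $\sigma_i$ and every $\tau_i$ equals the constant map to $e$, so $\tau_i|_Z = \sigma_i|_Z = \sigma_{e^{-1}(i)}|_Z$. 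Hence $\tau_i|_{W_g} = \sigma_{g^{-1}(i)}|_{W_g}$ for all $g \in S_4$ and $i \in \bar 4$, and $(X,\sigma)$ and $(X,\tau)$ are piecewise equivalent (in particular piecewise conjugate in the sense of Definition~\ref{d:piecewise-conjugacy}).

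For the negative part I would argue by contradiction: suppose $u \colon X \to \rmU(4)$ is a unitary equivalence between $(X,\sigma)$ and $(X,\tau)$, and let $v = u|_{\Delta^{S_4}_{(2)}}$, which is continuous as the restriction of $u$ to a subspace. I claim $v$ is weakly admissible, i.e.\ $v(\Delta^{g_0,\ldots,g_l}) \subset U_{g_0}\rmU(\cP(\Delta^{g_0,\ldots,g_l}))$ for every subsimplex of $\Delta^{S_4}_{(2)}$. Fix such a subsimplex and $x \in \Delta^{g_0,\ldots,g_l}$. By Lemma~\ref{l:prop-of-covers}.\ref{l:prop-of-covers:Delta-V} there is an index $j$ with $x \in V_{g_j}$, and then \eqref{e:tau} gives $\tau_i(x) = \sigma_{g_j^{-1}(i)}(x)$ for all $i$, so Lemma~\ref{l:unitary-conj-block} yields $u(x) \in U_{g_j}\rmU(\cP_\sigma(x))$. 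By Lemma~\ref{l:point-parititon-simplex} we have $\cP_\sigma(x) \preceq \cP(\Delta^{g_0,\ldots,g_l})$, hence $\rmU(\cP_\sigma(x)) \subseteq \rmU(\cP(\Delta^{g_0,\ldots,g_l}))$ and $u(x) \in U_{g_j}\rmU(\cP(\Delta^{g_0,\ldots,g_l}))$. Finally $g_0^{-1}g_j$ is either the identity (if $j=0$) or one of the generators $g_0^{-1}g_1,\ldots,g_0^{-1}g_l$ of the group whose orbit partition is $\cP(\Delta^{g_0,\ldots,g_l})$; in either case it preserves every block, so $U_{g_0}^{-1}U_{g_j} = U_{g_0^{-1}g_j} \in \rmU(\cP(\Delta^{g_0,\ldots,g_l}))$ and therefore $U_{g_j}\rmU(\cP(\Delta^{g_0,\ldots,g_l})) = U_{g_0}\rmU(\cP(\Delta^{g_0,\ldots,g_l}))$. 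This gives $u(x) \in U_{g_0}\rmU(\cP(\Delta^{g_0,\ldots,g_l}))$, establishing weak admissibility of $v$, which contradicts Theorem~\ref{t:no-admissible}. Hence no unitary equivalence exists.

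I expect the only delicate point to be the last step of the admissibility check: the covering index $g_j$ furnished by Lemma~\ref{l:prop-of-covers}.\ref{l:prop-of-covers:Delta-V} need not be the distinguished vertex $g_0$, and one must invoke the fact (recalled in the preliminaries, $U_g \in \rmU(\cP)$ iff $g$ fixes every block of $\cP$) that every element of the group generated by $\{g_0^{-1}g_i\}_{i}$ preserves its own orbit partition in order to pass from the $U_{g_j}$-coset to the $U_{g_0}$-coset. Everything else is a direct unwinding of the definitions together with Lemmas~\ref{l:unitary-conj-block} and \ref{l:point-parititon-simplex}.
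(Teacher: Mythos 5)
Your proposal is correct and follows essentially the same route as the paper: piecewise equivalence via the cover $W_g = V_g$ (with $Z$ adjoined to $W_e$), and non-unitary-equivalence by showing the restriction of a hypothetical unitary equivalence to $\Delta^{S_4}_{(2)}$ is weakly admissible using Lemmas~\ref{l:prop-of-covers}, \ref{l:unitary-conj-block}, and \ref{l:point-parititon-simplex}, then invoking Theorem~\ref{t:no-admissible}. Your explicit justification that $U_{g_0^{-1}g_j}\in\rmU(\cP(\Delta^{g_0,\ldots,g_l}))$ (because elements of the generated group preserve their own orbits) fills in a step the paper states without comment.
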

\begin{proof}
	The systems are piecewise equivalent by construction.
	Specifically, they coincide on $Z$, and $\tau_i|_{V_g} = \sigma_{g^{-1}(i)}|_{V_g}$.
	Thus, the open cover $\{W_g\}_{g \in S_4}$, defined by $W_g = V_g$ for $g \neq e$ and $W_e = V_e \cup Z$, satisfies the conditions of Definition~\ref{d:piecewise-conjugacy}.

	Now, suppose that there exists a unitary equivalence $u: X \to \rmU(4)$ between $(X, \sigma)$ and $(X, \tau)$.
	We claim that the restriction $u|_{\Delta^{S_4}_{(2)}}: \Delta^{S_4}_{(2)} \to \rmU(4)$ is a weakly admissible map.

	To prove this, consider a subsimplex $\Delta^{g_0, \ldots, g_l}$ of $\Delta^{S_4}_{(2)}$, where $l \in \{0, 1, 2\}$, and let $x \in \Delta^{g_0, \ldots, g_l}$ be an arbitrary point.
	By Lemma~\ref{l:prop-of-covers}.\ref{l:prop-of-covers:Delta-V}, there exists an index $0 \leq i \leq l$ such that $x \in \Delta^{g_0, \ldots, g_l} \cap V_{g_i}$.
	From Lemma~\ref{l:unitary-conj-block} and Lemma~\ref{l:point-parititon-simplex}, it follows that $u(x) \in U_{g_i} \rmU(\cP(\Delta^{g_0, \ldots, g_l}))$.

	Therefore, $U_{g_0}^* u(x) \in U_{g_0}^* U_{g_i} \rmU(\cP(\Delta^{g_0, \ldots, g_l}))$.
	Since $U_{g_0}^* U_{g_i} = U_{g_0^{-1} g_i} \in \rmU(\cP(\Delta^{g_0, \ldots, g_l}))$, we deduce that
	\[
		U_{g_0}^* u(x) \in \rmU(\cP(\Delta^{g_0, \ldots, g_l})).
	\]

	Since $x$ was arbitrary, it follows that $u(\Delta^{g_0, \ldots, g_l}) \subset U_{g_0}\rmU(\cP(\Delta^{g_0, \ldots, g_l}))$.
	This precisely means that $u|_{\Delta^{S_4}_{(2)}}$ is weakly admissible (Definition~\ref{d:admissible}).

	This contradicts Theorem~\ref{t:no-admissible}, which states that no such weakly admissible map exists.
	Thus, there can be no unitary equivalence between $(X, \sigma)$ and $(X, \tau)$.
	We conclude that the systems are piecewise equivalent but not unitarily equivalent.
\end{proof}

Since the systems $(X, \sigma)$ and $(X, \tau)$ have no fixed points, from the proof of Theorem~\ref{t:pc-not-ue}, we can deduce that there is no isomorphism between $\cA(X, \sigma)$ and $\cA(X, \tau)$ extending the identity on $C(X)$.
However, it is unclear if this suffices to prove that the tensor algebras are not isomorphic.
In the next section, we circumvent this by introducing a rigidification procedure for multivariable dynamical systems, which allows ruling out non-identity homeomorphisms between the base spaces.

\section{Rigidification of non-equivalence to non-conjugacy}\label{s:rigidification}

Let $X$ be a metrizable compact space.
Fix a countable dense set of points $\{p_k\}_{k\in \N}$ of $X$.
Let $\N^* = \N \cup \{\infty\}$ be the one-point compactification of $\N$.
Consider the subspace $X' \subset X\times \N^*$ given by
\[
	X' = \left(X\times \{\infty\}\right) \cup \{(p_k, i)\colon k\in \N, k\leq i \leq 2k\}. %
\]

\begin{lemma}\label{l:prop-of-xprime}
	The space $X'$ is compact and of covering dimension not greater than the dimension of $X$.
	If there are no isolated points in $X$, then the isolated points in $X'$ are exactly the points of the form $(p_k, l)$ for $l \neq \infty$.
\end{lemma}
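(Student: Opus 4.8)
The plan is to realize $X'$ as a subspace of the compact metrizable product $X\times\N^*$ and to read off all three assertions from the structure of the slices of this product together with a sum theorem for covering dimension. The facts I would use repeatedly are: for $i\in\N$ the slice $X\times\{i\}$ is clopen in $X\times\N^*$ (because $i$ is an isolated point of $\N^*$) and meets $X'$ in the \emph{finite} set $\{(p_j,i): i/2\le j\le i\}$; whereas $X\times\{\infty\}$ is closed in $X\times\N^*$ and is entirely contained in $X'$. For compactness I would show $X'$ is closed in $X\times\N^*$, which makes it compact (a closed subspace of a compact space) and metrizable. If a sequence in $X'$ converges to $(x,i)$ with $i\ne\infty$, its second coordinate is eventually $i$, so the sequence eventually lies in the finite set $X'\cap(X\times\{i\})$ and so does its limit; if it converges to $(x,\infty)$, the limit already lies in $X\times\{\infty\}\subseteq X'$. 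Hence $X'$ is closed.

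For the covering dimension, the point is that $X'$ is just the closed copy $X\times\{\infty\}\cong X$ of $X$ with countably many isolated points adjoined. Formally, $X'$ is the union of the closed subspace $X\times\{\infty\}$ and the singletons $\{(p_k,i)\}$ over all admissible pairs $k\le i\le 2k$; this is a countable family of closed subsets of $X'$, each of covering dimension $\le\dim X$ (the first is homeomorphic to $X$, and a singleton has dimension $0\le\dim X$ when $X\ne\emptyset$; the case $X=\emptyset$ is trivial since then $X'=\emptyset$). Since $X'$ is separable metrizable, the countable closed sum theorem for covering dimension (see \cite{Engelking}) gives $\dim X'\le\dim X$; the reverse inequality, which is not needed here, is immediate because $X$ embeds as the closed subspace $X\times\{\infty\}$.

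For the last assertion, assume $X$ has no isolated points. On one hand, every $(p_k,l)$ with $l\ne\infty$ is isolated in $X'$, since $X'\cap(X\times\{l\})$ is open in $X'$ (as $X\times\{l\}$ is open in $X\times\N^*$) and finite, hence discrete. On the other hand, no point $(x,\infty)$ is isolated. Here I would first record the auxiliary fact that for every nonempty open $V\subseteq X$ the set $\{k\in\N: p_k\in V\}$ is infinite: $V$ is infinite (a nonempty open subset of a space with no isolated points), the dense set $\{p_k\}$ meets $V$ in a set dense in $V$, and a finite subset of the Hausdorff space $V$ cannot be dense in $V$. Then, given a basic $X'$-neighborhood of $(x,\infty)$, which may be taken of the form $\big(V\times(\{N,N+1,\dots\}\cup\{\infty\})\big)\cap X'$, I choose $k\ge N$ with $p_k\in V$ and observe that $(p_k,k)\in X'$ lies in this neighborhood yet differs from $(x,\infty)$; hence $(x,\infty)$ is not isolated. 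Since every point of $X'$ has the form $(x,\infty)$ or $(p_k,l)$ with $l\ne\infty$, this identifies the isolated points exactly as claimed.

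The only genuinely non-elementary ingredient is the covering-dimension bound, which rests on the countable closed sum theorem; I expect that to be the one step requiring care — namely invoking the theorem in a form valid for (compact) metrizable spaces — while the compactness argument and the description of the isolated points are routine point-set topology once the clopen/closed decomposition of $X\times\N^*$ and the density of $\{p_k\}$ are in hand.
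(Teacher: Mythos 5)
Your proposal is correct, and it overlaps with the paper's proof on compactness (both show $X'$ is closed in the compact product $X\times\N^*$; the paper exhibits an explicit basic open set around any point of the complement, while you use sequences, which is legitimate here since $X\times\N^*$ is metrizable). The genuine divergence is in the dimension bound: the paper applies the product theorem $\dim(X\times\N^*)\le\dim X$ (since $\dim\N^*=0$, citing Pears) and then passes to the closed subspace $X'$, whereas you decompose $X'$ itself into the closed copy $X\times\{\infty\}\cong X$ plus countably many closed singletons and invoke the countable closed sum theorem. Both are standard for compact metrizable spaces; the paper's route needs a (potentially delicate) product theorem but gets the bound for all of $X\times\N^*$ in one stroke, while yours stays inside $X'$ and uses only the sum theorem, at the cost of having to dispose separately of the trivial case $X=\emptyset$ so that singletons have dimension $\le\dim X$. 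For the isolated points, the forward direction is identical (the slice $X'\cap(X\times\{l\})$ is open in $X'$ and finite, hence discrete); for the converse the paper has a one-line contrapositive --- an isolated point $(x,\infty)$ of $X'$ would be isolated in the subspace $X\times\{\infty\}$, forcing $x$ to be isolated in $X$ --- while you argue directly that every basic neighborhood of $(x,\infty)$ contains some $(p_k,k)$ with $k$ large, using density of $\{p_k\}$ and the absence of isolated points in $X$. Your direct argument is longer but also yields the slightly stronger observation that $(x,\infty)$ is in fact a limit of isolated points of $X'$; the paper's contrapositive is shorter and suffices for the stated claim.
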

\begin{proof}
	For compactness, it is enough to show that $X'$ is a closed subset of a compact space $X\times \N^*$.
	Indeed, suppose that $(x, l)\in X\times \N^*\setminus X'$.
	Then, we have $l<\infty$ and the set $(X\setminus \{p_1, \ldots, p_l\})\times \{l\}$ is an open neighborhood of $(x, l)$ disjoint from $X'$.
	Therefore, $X'$ is closed in $X\times \N^*$ and thus compact.

	The dimension of $\N^*$ is $0$, so the dimension of the product $X\times \N^*$ is less than or equal to the dimension of $X$ by \cite[p.~126, Proposition~2.6]{Pears75}.
	Consequently, the dimension of the closed subset $X'\subset X\times \N^*$ is at most the dimension of $X$ either.

	Finally, to prove the last claim about isolated points, let $l\in \N$.
	The subset $X'_l = X\times \{l\} \cap X' \subset X'$ is clopen and finite and thus consists of isolated points.
	Therefore, every point of the form $(p_k, l)$ for $l\neq \infty$ is isolated in $X'$.

	On the other hand, if $(x, \infty)$ is isolated in $X'$, then it is also isolated in $X\times \{\infty\} \subset X'$ and thus $x$ is isolated in $X$.
	Therefore, if $X$ has no isolated points, then the isolated points in $X'$ are precisely the points of the form $(p_k, l)$ for $l \neq \infty$.
\end{proof}

For a multivariable dynamical system $(X, \sigma)$, we define a (typically not continuous) function $I_\sigma\colon X\to \N^*$ by
\[
	I_\sigma(x) = \#\{ y \in X \colon y \text{ is isolated and }  \sigma_i(y) = x \text{ for some } i\in \bar n\}.
\]
In other words, $I_\sigma(x)$ is the number of isolated points in the union of preimages of $x$ under $\sigma_i$'s.
\begin{lemma}\label{l:I-is-invariant}
	If $(X, \sigma)$ and $(X, \tau)$ are unitarily equivalent, then $I_\sigma = I_\tau$.
\end{lemma}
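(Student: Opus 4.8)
The plan is to prove the equality of the two functions pointwise: for each $x \in X$, I will show that the set of isolated points sent to $x$ by some $\sigma_i$ coincides \emph{as a set} with the set of isolated points sent to $x$ by some $\tau_i$, and then take cardinalities in $\N^*$ to get $I_\sigma(x) = I_\tau(x)$. The only data that must be transported across the unitary equivalence is, at each point $y$, the unordered collection $\{\sigma_1(y), \ldots, \sigma_n(y)\}$ versus $\{\tau_1(y), \ldots, \tau_n(y)\}$; being isolated is a property of the topology of $X$ alone and is therefore the same condition on both sides.

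First I would fix a unitary equivalence $u \colon X \to \rmU(n)$ between $(X, \sigma)$ and $(X, \tau)$ and an arbitrary point $y \in X$. Evaluating at $y$ gives a single matrix $u(y) \in \rmU(n)$, which, being unitary, has no zero row and no zero column. Hence for every column index $j$ there is a row index $i$ with $u_{ij}(y) \neq 0$, and for every row index $i$ there is a column index $j$ with $u_{ij}(y) \neq 0$. Applying the defining implication of unitary equivalence, $u_{ij}(x) \neq 0 \Rightarrow \tau_i(x) = \sigma_j(x)$, at the point $x = y$ in both cases yields the set equality
\[
\{\sigma_1(y), \ldots, \sigma_n(y)\} = \{\tau_1(y), \ldots, \tau_n(y)\}
\]
inside $X$: each $\sigma_j(y)$ equals some $\tau_i(y)$, and each $\tau_i(y)$ equals some $\sigma_j(y)$.

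From this equality, for every $x \in X$ the statement ``$\sigma_i(y) = x$ for some $i \in \bar n$'' is equivalent to ``$\tau_i(y) = x$ for some $i \in \bar n$''. Intersecting with the (topologically defined, hence common) set of isolated points of $X$, I obtain
\[
\{\, y \in X : y \text{ is isolated and } \sigma_i(y) = x \text{ for some } i \,\} = \{\, y \in X : y \text{ is isolated and } \tau_i(y) = x \text{ for some } i \,\},
\]
and comparing cardinalities gives $I_\sigma(x) = I_\tau(x)$; since $x$ was arbitrary, $I_\sigma = I_\tau$.

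I do not expect a genuine obstacle here. The one subtlety is that the implication in Definition~\ref{d:unitary-equivalence} runs only one way, so I must not try to invert it; instead I derive the two inclusions of the set equality separately — one from the nonvanishing of an entry in each column of $u(y)$, the other from the nonvanishing of an entry in each row. (Alternatively, one could observe that the conjugate transpose of $u$ is a unitary equivalence from $(X,\tau)$ to $(X,\sigma)$ and appeal to symmetry, but the row/column argument is cleaner.)
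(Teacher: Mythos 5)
Your proof is correct and follows essentially the same route as the paper: the paper derives $I_\sigma(x)\le I_\tau(x)$ from the fact that no column of $u(y)$ vanishes and then invokes symmetry, while you simply make both directions (columns and rows of the unitary) explicit. No gap.
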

\begin{proof}
	Let $u\colon X\to \rmU(n)$ be a unitary equivalence between $(X, \sigma)$ and $(X, \tau)$.
	Suppose that $y$ is an isolated point such that $\sigma_j(y) = x$.
	Then, there is at least one $i\in \bar n$ such that $u_{ij}(y) \neq 0$.
	This implies that $\tau_i(y) = \sigma_j(y) = x$.
	It follows that every isolated point in the union of preimages of $x$ under $\sigma_i$'s is also in the union of preimages of $x$ under $\tau_i$'s and thus $I_\sigma(x) \leq I_\tau(x)$.  By symmetry, we conclude that $I_\sigma(x) = I_\tau(x)$.
\end{proof}

We define the \emph{rigidification} of a multivariable dynamical system $(X, \sigma)$ to be the system $(\hat{X}, \hat{\sigma})$, where $\hat{X} = X\sqcup X'$, $\hat\sigma_i|_X = \sigma_i$, and $\hat\sigma_i((x, l)) = x \in X$ for all $i \in \bar n$ and $(x, l)\in X'$.
\begin{lemma}\label{l:rigidification}
	Let $(X, \sigma)$ and $(X, \tau)$ be two multivariable dynamical systems such that $X$ has no isolated points.
	Then, if the rigidifications $(\hat{X}, \hat{\sigma})$ and $(\hat{X}, \hat{\tau})$ are unitarily equivalent after a conjugation, then $(X, \sigma)$ and $(X, \tau)$ are unitarily equivalent.
\end{lemma}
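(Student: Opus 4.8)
The plan is to use the numerical invariant from Lemma~\ref{l:I-is-invariant} to force the conjugating homeomorphism to be the identity on the copy of $X$ inside $\hat X$, and then to obtain the desired unitary equivalence by restriction. Fix a homeomorphism $\varphi\colon\hat X\to\hat X$ realizing the unitary equivalence after conjugation, and put $\tau'=\varphi^{-1}\circ\hat\tau\circ\varphi$, so that $(\hat X,\hat\sigma)$ and $(\hat X,\tau')$ are unitarily equivalent, say via $u\colon\hat X\to\rmU(n)$. We may assume the points $p_k$ are pairwise distinct, since a space with no isolated points is either empty (a trivial case) or infinite.

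First I would compute the invariant. By Lemma~\ref{l:prop-of-xprime}, the isolated points of $\hat X=X\sqcup X'$ are precisely the points $(p_k,l)\in X'$ with $k\le l\le 2k$, and by definition each map $\hat\sigma_i$ (and likewise each $\hat\tau_i$) sends $(p_k,l)$ to $p_k$ in the copy of $X$. Since the $p_k$ are distinct, this shows $I_{\hat\sigma}=I_{\hat\tau}$; call this common function $I$, so that $I(p_k)=k+1$ for every $k$ and $I(x)=0$ for every other $x\in\hat X$. In particular $I^{-1}(k+1)=\{p_k\}$ is a singleton.

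Next I would pin down $\varphi$. Lemma~\ref{l:I-is-invariant} applied to the unitarily equivalent systems $(\hat X,\hat\sigma)$ and $(\hat X,\tau')$ gives $I=I_{\hat\sigma}=I_{\tau'}$. On the other hand $\tau'_i(y)=x$ is equivalent to $\hat\tau_i(\varphi(y))=\varphi(x)$, and $\varphi$ maps isolated points bijectively onto isolated points, so substituting $z=\varphi(y)$ yields $I_{\tau'}=I_{\hat\tau}\circ\varphi=I\circ\varphi$. Hence $I=I\circ\varphi$, and since each level set $I^{-1}(k+1)$ is the singleton $\{p_k\}$, we conclude $\varphi(p_k)=p_k$ for all $k$. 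As $\{p_k\}$ is dense in the closed subset $X\subseteq\hat X$, continuity of $\varphi$ gives $\varphi(X)\subseteq X$; the same argument applied to $\varphi^{-1}$ shows $\varphi(X)=X$, so $\varphi|_X$ is a self-homeomorphism of $X$ fixing a dense set and therefore equals $\id_X$.

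Finally I would restrict the unitary equivalence. For $x\in X$ we have $\hat\sigma_i(x)=\sigma_i(x)$, and, since $\varphi$ fixes $X$ pointwise and $\tau_i$ maps $X$ into $X$, also $\tau'_i(x)=\varphi^{-1}(\hat\tau_i(x))=\varphi^{-1}(\tau_i(x))=\tau_i(x)$. Thus for $x\in X$ the defining property of the unitary equivalence $u$ reads $u_{ij}(x)\ne 0\Rightarrow\tau_i(x)=\sigma_j(x)$, so $u|_X\colon X\to\rmU(n)$ is a unitary equivalence between $(X,\sigma)$ and $(X,\tau)$, as required. I expect the only subtle point to be the chain of deductions showing that $\varphi$ must preserve the clopen decomposition $\hat X=X\sqcup X'$ and act as the identity on $X$; the genuinely clever ingredient — choosing the ranges $k\le l\le 2k$ so that the invariant $I$ separates all the points $p_k$ — is already encoded in the construction of $X'$.
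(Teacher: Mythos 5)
Your proof is correct and follows essentially the same route as the paper: compute the invariant $I$ on the rigidified systems, use Lemma~\ref{l:I-is-invariant} together with $I_{\varphi^{-1}\hat\tau\varphi}=I_{\hat\tau}\circ\varphi$ and the uniqueness of the point with $I$-value $k+1$ to force $\varphi(p_k)=p_k$, then conclude $\varphi|_X=\id_X$ by density and restrict the unitary equivalence. The extra details you supply (distinctness of the $p_k$ and the verification that $I_{\tau'}=I_{\hat\tau}\circ\varphi$) are points the paper treats implicitly, and they check out.
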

\begin{proof}
	Suppose that $\varphi\colon \hat{X}\to \hat{X}$ is a homeomorphism such that the systems $(\hat{X}, \hat\sigma)$ and $(\hat{X}, \varphi^{-1}\hat{\tau}\varphi)$ are unitarily equivalent.
	It is enough to show that $\varphi|_X = \id_X$.
	Indeed, in this case $\tau = \varphi^{-1}\hat{\tau}\varphi|_X$, $\hat\sigma|_X = \sigma$, and the unitary equivalence between $(\hat{X}, \hat\sigma)$ and $(\hat{X}, \varphi^{-1}\hat{\tau}\varphi)$ restricts to a unitary equivalence between $(X, \sigma)$ and $(X, \tau)$.

	By Lemma~\ref{l:prop-of-xprime}, the isolated points in $\hat{X}$ are of the form $(p_k, l)\in X'$ for $k \leq l \leq 2k$.
	We have $\hat\sigma_i((p_k,l)) = \hat\tau_i((p_k, l)) = p_k \in X$ for all $i\in \bar n$ and $k\in \N$.
	Therefore, we have $I_{\hat\sigma}(p_k) = I_{\hat\tau}(p_k) = k + 1$ for all $k \in \N$ and $I_{\hat\sigma}(y) = I_{\hat\tau}(y) = 0$ for all $y\in \hat{X}$ not equal to any $p_k$.
	In particular, $p_k$ is the unique point in $\hat{X}$ satisfying $I_{\hat\tau}(p_k) = k+1$.

	By Lemma~\ref{l:I-is-invariant}, we have $I_{\hat\sigma} = I_{\varphi^{-1}\hat\tau\varphi} = I_{\hat\tau}\circ \varphi$.
	Therefore, we have $k+1 = I_{\hat\sigma}(p_k) = I_{\hat\tau}(\varphi(p_k))$.
	By uniqueness, $\varphi(p_k) = p_k$ for all $k\in \N$.
	Since the set of points $\{p_k\}_{k\in \N}$ is dense in $X$, we have $\varphi|_X = \id_X$.
\end{proof}

\begin{theorem}\label{t:pc-not-iso}
	There exist two piecewise conjugate $4$-variable dynamical systems on a two-dimensional compact metrizable space such that their tensor and covariance algebras are not algebraically isomorphic.
\end{theorem}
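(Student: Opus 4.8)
The plan is to apply the rigidification construction of this section to the two systems $(X,\sigma)$ and $(X,\tau)$ built in Theorem~\ref{t:pc-not-ue}, and then to assemble three facts: those systems are not unitarily equivalent (Theorem~\ref{t:pc-not-ue}); any conjugation between the rigidified systems must restrict to the identity on $X$ (Lemma~\ref{l:rigidification}); and, for fixed-point-free systems, an algebraic isomorphism of tensor or covariance algebras is equivalent to unitary equivalence after conjugation (Theorem~\ref{t:uc-is-iso}).

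Concretely, let $(X,\sigma)$, $(X,\tau)$ be the systems on $X=\Delta^{S_4}_{(2)}\sqcup Z$ from Section~\ref{s:pe-but-not-ue}, with $q$ the quotient map defining $Z$. First I would record that $X$ is a two-dimensional compact metrizable space with no isolated points. Compactness and metrizability are already established, and no summand has isolated points since $\Delta^{S_4}_{(2)}$ and $\Delta^{S_4}_{(2)}\times\{1,2,3,4\}$ have none. For the dimension, observe $Z=\bigcup_{i=1}^{4}q(\Delta^{S_4}_{(2)}\times\{i\})$, and each restriction $q|_{\Delta^{S_4}_{(2)}\times\{i\}}$ is injective (since $(x,i)\sim(y,i)$ forces $x=y$), hence a homeomorphism of a compactum onto a closed two-dimensional subset of $Z$; the finite sum theorem for covering dimension then gives $\dim Z\le 2$, so $\dim X=2$. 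Passing to the rigidifications $(\hat X,\hat\sigma)$, $(\hat X,\hat\tau)$ with $\hat X=X\sqcup X'$, Lemma~\ref{l:prop-of-xprime} shows $\hat X$ is compact of covering dimension at most $2$ (hence exactly $2$, as it contains $\Delta^{S_4}_{(2)}$), and a disjoint union of two metrizable compacta is a metrizable compactum.

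Two structural properties of the rigidified systems then need to be verified. First, $(\hat X,\hat\sigma)$ and $(\hat X,\hat\tau)$ have no fixed points: on $X$ the maps equal the fixed-point-free $\sigma_i$, $\tau_i$, and on $X'$ one has $\hat\sigma_i((x,l))=\hat\tau_i((x,l))=x\in X$, which is disjoint from $X'$ inside $\hat X=X\sqcup X'$. Second, $(\hat X,\hat\sigma)$ and $(\hat X,\hat\tau)$ are piecewise equivalent, hence piecewise conjugate: if $\{W_g\}_{g\in S_4}$ is the open cover of $X$ witnessing piecewise equivalence of $(X,\sigma)$ and $(X,\tau)$, then $\hat W_g=W_g$ for $g\neq e$ together with $\hat W_e=W_e\sqcup X'$ is an open cover of $\hat X$ for which $\hat\tau_i|_{\hat W_g}=\hat\sigma_{g^{-1}(i)}|_{\hat W_g}$ still holds, since on $X'$ both sides equal the projection $(x,l)\mapsto x$ and $e^{-1}(i)=i$.

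With the hypotheses in place, I would finish by contradiction. Assume $\cA(\hat X,\hat\sigma)\cong\cA(\hat X,\hat\tau)$, or $\cA_0(\hat X,\hat\sigma)\cong\cA_0(\hat X,\hat\tau)$, as abstract algebras. Since neither rigidified system has a fixed point, Theorem~\ref{t:uc-is-iso} produces a homeomorphism $\varphi\colon\hat X\to\hat X$ making $(\hat X,\hat\sigma)$ and $(\hat X,\varphi^{-1}\hat\tau\varphi)$ unitarily equivalent, i.e.\ $(\hat X,\hat\sigma)$ and $(\hat X,\hat\tau)$ are unitarily equivalent after a conjugation. As $X$ has no isolated points, Lemma~\ref{l:rigidification} then forces $(X,\sigma)$ and $(X,\tau)$ to be unitarily equivalent, contradicting Theorem~\ref{t:pc-not-ue}. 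Hence no such isomorphism exists, while $(\hat X,\hat\sigma)$ and $(\hat X,\hat\tau)$ are piecewise conjugate $4$-variable systems on a two-dimensional compact metrizable space, which proves the theorem. I do not expect a genuine obstacle here: the argument is a recombination of Theorems~\ref{t:no-admissible}, \ref{t:uc-is-iso}, \ref{t:pc-not-ue} and Lemma~\ref{l:rigidification}, so the only care needed is in the routine verifications above — two-dimensionality of $\hat X$, fixed-point freeness, and persistence of piecewise conjugacy under rigidification.
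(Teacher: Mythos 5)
Your proposal is correct and follows essentially the same route as the paper: rigidify the systems of Theorem~\ref{t:pc-not-ue}, check fixed-point freeness and two-dimensionality, and chain Lemma~\ref{l:rigidification} with Theorem~\ref{t:uc-is-iso}. You in fact supply two details the paper leaves implicit — that $X$ has no isolated points (needed to invoke Lemma~\ref{l:rigidification}) and that piecewise conjugacy persists under rigidification via the cover $\hat W_e = W_e \sqcup X'$ — both of which are verified correctly.
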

\begin{proof}
	Consider the two $4$-variable dynamical systems $(X, \sigma)$ and $(X, \tau)$ defined in Section~\ref{s:pe-but-not-ue}.
	By construction, $X$ has a finite closed covering by 2-dimensional simplices.
	Therefore, the dimension of $X$ is $2$ by \cite[p.~125, Theorem~2.5]{Pears75}.
	Consequently, the dimension of the rigidifications $(\hat{X}, \hat{\sigma})$ and $(\hat{X}, \hat{\tau})$ is also $2$ by Lemma~\ref{l:prop-of-xprime}.

	We claim that $(\hat{X}, \hat{\sigma})$ has no fixed points.
	Indeed, for all $1\leq i \leq 4$, we have $\hat\sigma_i(X') \subset X$, so there are no fixed points in $X'$.
	Analogously, $X$ consists of two components, $Z$ and $\Delta^{S_4}_{(2)}$, and the maps $\hat\sigma_i$ act from each of these components to the other.
	So, there are no fixed points in $X$ either.

	By Theorem~\ref{t:pc-not-ue}, the systems $(X, \sigma)$ and $(X, \tau)$ are not unitarily equivalent.
	Therefore, by Lemma~\ref{l:rigidification}, the rigidifications $(\hat{X}, \hat{\sigma})$ and $(\hat{X}, \hat{\tau})$ are not unitarily equivalent after a conjugation.
	By Theorem~\ref{t:uc-is-iso}, this implies that both the tensor and covariance algebras of $(\hat{X}, \hat{\sigma})$ and $(\hat{X}, \hat{\tau})$ are not algebraically isomorphic.
\end{proof}
Theorem~\ref{t:pc-not-iso} provides a counterexample to the piecewise conjugacy conjecture by Davidson and Katsoulis~\cite[Conjecture 3.26]{DK11}.

\printbibliography
\end{document}